\tikzstyle{n}=[circle, draw, fill, minimum size=6, inner sep=0]
\tikzstyle{ngr}=[circle, draw, fill, gray, minimum size=6, inner sep=0]
\tikzstyle{int}=[draw, circle, fill,  minimum size=4, inner sep=1]
\tikzstyle{root}=[circle, draw, fill, minimum size=0, inner sep=0.7]
\tikzstyle{intgr}=[draw, circle, fill, gray,  minimum size=3, inner sep=1]
\tikzstyle{ext}=[draw, circle,  minimum size=3, inner sep=1]
\tikzset{lab/.style={draw, circle, minimum size=5, inner sep=1.1}, 
n/.style={draw, circle, fill, minimum size=5, inner sep=1}}
\newtheorem{thm}{Theorem}[section]
\newtheorem{prop}[thm]{Proposition}
\newtheorem{lemma}[thm]{Lemma}
\newtheorem{claim}[thm]{Claim}
\newtheorem{remark}[thm]{Remark}
\numberwithin{equation}{section}
\numberwithin{figure}{section}
\newcommand{\dontprint}[1]{\relax}
\theoremstyle{definition}
\newcommand{\dis}{\displaystyle}
\newcommand{\br}{{\}\hspace{-0.07cm}.\hspace{-0.03cm}.\hspace{-0.07cm}\} }}
\newcommand{\leftbr}{{\{\hspace{-0.07cm}.\hspace{-0.03cm}.\hspace{-0.07cm}\{ }}
\newcommand{\Graphs}{\mathsf{Graphs}} 
\newcommand{\Br}{{\mathsf{Br} }} 
\newcommand{\Com}{{\mathsf{Com} }}
\newcommand{\coCom}{{\mathsf{coCom} }}
\newcommand{\Lie}{{\mathsf {Lie}}}
\newcommand{\As}{{\mathsf {As}}}
\newcommand{\coAs}{{\mathsf{coAs}}}
\newcommand{\coLie}{{\mathsf{coLie}}}
\newcommand{\Ger}{{\mathsf{Ger}}}
\newcommand{\Cbu}{C^{\bullet}}
\newcommand{\ve}{{\varepsilon}}
\newcommand{\mj}{{\mathfrak{j}}}
\newcommand{\md}{{\mathfrak{d}}}
\newcommand{\bfzero}{{\bf 0}}
\newcommand{\id}{{\mathrm {i d} }}
\newcommand{\sgn}{\mathrm{sgn}}
\newcommand{\La}{{\Lambda}}
\newcommand{\la}{{\lambda}}
\newcommand{\de}{{\delta}}
\newcommand{\si}{{\sigma}}
\newcommand{\pa}{{\partial}}
\renewcommand{\c}{{\circ}}
\newcommand{\bul}{{\bullet}}
\newcommand{\bs}{{\bf s}}
\newcommand{\bsi}{{\bf s}^{-1}\,}
\newcommand{\opp}{\mathrm{opp}}
\newcommand{\Ind}{\mathsf{Ind}}
\newcommand{\cA}{\mathcal{A}}
\newcommand{\cT}{\mathcal{T}}
\newcommand{\cM}{\mathcal{M}}
\newcommand{\cO}{\mathcal{O}}
\newcommand{\cF}{\mathcal{F}}
\newcommand{\cC}{\mathcal{C}}
\newcommand{\bbK}{{\mathbb{K}}}
\newcommand{\bbQ}{{\mathbb{Q}}}
\newcommand{\bbZ}{{\mathbb{Z}}}
\newcommand{\Sh}{\mathrm{Sh}}
\DeclareMathOperator{\Gr}{Gr}
\DeclareMathOperator{\coker}{coker}
\DeclareMathOperator{\vsspan}{span}
\title{A direct computation of the cohomology of the braces operad}
\author{Vasily Dolgushev}
\author{Thomas Willwacher}
\begin{document}

\large

\date{}

\begin{abstract}
We give a self-contained and purely combinatorial proof of the well known fact that 
the cohomology of the braces operad is the operad $\Ger$ governing 
Gerstenhaber algebras. 
\end{abstract}

\maketitle

~\\[-0.5cm]
{\small 
{\it MSC 2010:} 18D50, 18G55, 55P10. \\ 
{\it Keywords:} algebraic operads, homotopy algebras, the 
Deligne conjecture on Hochschild cochains.} \\
~\\

\section{Introduction}
It is a well known fact \cite{Ger} 
that the Hochschild cohomology of an associative (or $A_\infty$) algebra $A$ carries the structure 
of a Gerstenhaber algebra. In 1993, P. Deligne \cite{Deligne} asked whether this Gerstenhaber algebra 
structure is induced by an action of some version of the chains operad of the little disks operad on 
the Hochschild cochain complex $\Cbu(A,A)$ of $A$.
This question became known as the Deligne conjecture and was answered affirmatively by various authors 
including C. Berger and B. Fresse \cite{BF},  R.M. Kaufmann \cite{Kauf-Spineless}, \cite{Kauf-Cacti},
M. Kontsevich and Y. Soibelman \cite{K-Soi}, J. E. McClure and J. H. Smith \cite{M-Smith}, and
D. Tamarkin \cite{Dima-another}, \cite{Dima-dg}.

A key role in the proof of the Deligne conjecture is played by the \emph{braces operad} $\Br$, which encodes 
a set of natural operations on the Hochschild cochain complex $\Cbu(A,A)$ of any $A_\infty$ algebra $A$.
In the form used here, this differential graded (dg) operad was 
introduced by Kontsevich and Soibelman \cite{K-Soi}, where it is called the ``minimal operad''. Its (quasi-isomorphic) variant\footnote{See also \cite[\S3.12]{Kauf-moduli} for the description of the map between the asscociative and $A_\infty$ version.} for associative algebras 
was considered by McClure and Smith, \cite{M-Smith2}, and both constructions go back to earlier work of Getzler \cite{Get} (cf. also \cite{GJ}).

The goal of this note is to give a purely combinatorial proof of the fact that the operad $H^{\bul}(\Br)$ is isomorphic to the operad 
$\Ger$ which governs Gerstenhaber algebras. 

Concretely, the $n$-th space $\Br(n)$ of the braces operad is 
spanned\footnote{In this note, the ground field $\bbK$ is any field of characteristic zero.} by 
planar rooted trees (called {\it brace trees}) with $n$ vertices labeled by $1,2, \dots, n$ and some (possibly zero) number 
of unlabeled (or {\it neutral}) vertices. The grading on $\Br(n)$ is obtained by declaring that 
each non-root edge carries degree $-1$ and each neural vertex carries degree $2$. 
In pictures, white circles with inscribed numbers denote labeled vertices and black 
circles denote neutral vertices\footnote{We tacitly assume that every neutral vertex 
has at least two children.}. Several examples of brace trees are shown 
in figure \ref{fig:examples}.
\begin{figure}[htp] 
\centering 
\begin{minipage}[t]{0.2\linewidth}
\centering
\begin{tikzpicture}[ baseline=-.65ex]
\node[root] (r) at (0, -0.5) {};
\draw (0,0) node[lab] (v1) { $\scriptstyle 1$};
\draw (0,0.6) node[lab] (v2) { $\scriptstyle 2$};
\draw (v1) edge (v2) edge (r);
\end{tikzpicture}
\end{minipage}
~
\begin{minipage}[t]{0.2\linewidth}
\centering
\begin{tikzpicture}[ baseline=-.65ex]
\node[root] (r) at (0, -0.5) {};
\draw (0,0) node[lab] (v2) { $\scriptstyle 2$};
\draw (0,0.6) node[lab] (v1) { $\scriptstyle 1$};
\draw (v2) edge (v1) edge (r);
\end{tikzpicture}
\end{minipage}
~
\begin{minipage}[t]{0.2\linewidth}
\centering
\begin{tikzpicture}[ baseline=-.65ex]
\node[root] (r) at (0, -0.5) {};
\draw (0,0) node[n] (n) {};
\draw (-0.5,0.5) node[lab] (v1) { $\scriptstyle 1$};
\draw (0.5,0.5) node[lab] (v2) { $\scriptstyle 2$};
\draw (n) edge (v1) edge (v2) edge (r);
\end{tikzpicture}
\end{minipage}
~
\begin{minipage}[t]{0.2\linewidth}
\centering
\begin{tikzpicture}[ baseline=-.65ex]
\node[root] (r) at (0, -0.5) {};
\draw (0,0) node[n] (n) {};
\draw (-0.5,0.5) node[lab] (v2) { $\scriptstyle 2$};
\draw (0.5,0.5) node[lab] (v1) { $\scriptstyle 1$};
\draw (n) edge (v1) edge (v2) edge (r);
\end{tikzpicture}
\end{minipage}
\caption{The brace trees $T_{1\mbox{-}2}$,  $T_{2\mbox{-}1}$, $T_{\cup}$ and 
$T_{\cup}^{\opp}$ from left to right, respectively} \label{fig:examples}
\end{figure} 
Thus the brace trees $T_{1\mbox{-}2}$ and $T_{2\mbox{-}1}$ 
have degree $-1$ while the brace trees $T_{\cup}$ and 
$T_{\cup}^{\opp}$ have degree $0$.  
 
The differential $\de(T)$ of a brace tree $T$ is defined by the formula 
$$
\de(T) : =\sum_{j=1}^n  \de_j(T)  ~ + ~ \sum_{v} \de_v (T), 
$$ 
where the second sum is over all neutral vertices and the operations 
$\de_j$, $\de_v$ are defined graphically as follows:
\begin{align*}
\de_j 
\begin{tikzpicture}[baseline=-.65ex]
\draw (0,0) node[lab] (v1){$\scriptstyle j$} ;
\draw (v1) edge (.4,.5) edge (.8,.5) edge (-.8,.5) edge (-.4,.5) edge (0,.5) edge (0,-.5);
\end{tikzpicture}
&=
\sum~
\pm
\begin{tikzpicture}[baseline=-.65ex]
 \draw (0,0) node[lab] (v1) { $\scriptstyle j$} edge (.8,.5) edge (-.8,.5) edge (0,-.5)
      (0,.5) node[n] (v2) {} edge (v1) edge (.4,1) edge (-.4,1) edge (0,1) ;
\end{tikzpicture}
\quad + \quad
\sum~
\pm
\begin{tikzpicture}[baseline=-2ex]
 \draw (0,0) node[lab] (v1) { $\scriptstyle j$} edge (.4,.5) edge (-.4,.5) edge (0,.5)
       (0,-.5) node[n] (v2) {} edge (.8,0) edge (-.8,0) edge (0,-1) edge (v1)  ;
\end{tikzpicture}
& \qquad&
\delta_v
\begin{tikzpicture}[ baseline=-.65ex]
 \draw (0,0) node[n] (v1) {} edge (.4,.5) edge (.8,.5) edge (-.8,.5) edge (-.4,.5) edge (0,.5) edge (0,-.5);
\end{tikzpicture}
&=
\sum~
\pm
\begin{tikzpicture}[ baseline=-.65ex,yshift=-.25cm]
 \draw (0,0) node[n] (v1) {} edge (.8,.5) edge (-.8,.5) edge (0,-.5)
       (0,.5) node[n] (v2) {} edge (v1) edge (.4,1) edge (-.4,1) edge (0,1);
\end{tikzpicture}
\end{align*}

The operadic multiplications are defined in terms of natural combinatorial operations with 
planar trees. For more details we refer the reader to Section 3 of this note or 
\cite[Sections 7-9]{DW}.  

The dg operad $\Br$ acts on the Hochschild cochain complex $\Cbu(A,A)$ of an 
$A_{\infty}$-algebra. The detailed description of this action is given in \cite[Appendix B]{DW}. 
For example, for $P_1, P_2 \in \Cbu(A,A)$, the cochain $T_{\cup} (P_1, P_2)$  
(resp. $T_{1\mbox{-}2} (P_1,P_2) + T_{2\mbox{-}1} (P_1,P_2) $) coincides  
(up to a sign factor) with the cup product $P_1 \cup P_2$ (resp. the Gerstenhaber bracket 
$[P_1, P_2]_G$).   

Let us recall (see Appendix \ref{app:Ger-relation}) that the $S_2$-invariant $\de$-cocycle 
\begin{equation}
\label{T-brack}
T_{\{a_1, a_2 \}} : = T_{1\mbox{-}2} +  T_{2\mbox{-}1}
\end{equation}
satisfies the Jacobi relation 
$$
T_{\{a_1, a_2 \}} \circ_1 T_{\{a_1, a_2 \}} + (1,2,3) \big( T_{\{a_1, a_2 \}} \circ_1 T_{\{a_1, a_2 \}}  \big) + 
(1,3,2) \big( T_{\{a_1, a_2 \}} \circ_1 T_{\{a_1, a_2 \}} \big) = 0. 
$$  
Therefore, we have a natural operad map 
\begin{equation}
\label{mj}
\mj : \La\Lie \to \Br
\end{equation}
from the shifted version $\La\Lie$ of the operad $\Lie$ to the dg operad $\Br$. 

It is easy to check that the cocycle $T_{\cup}$ satisfies the associativity relation up to homotopy
$$
T_{\cup} \circ_1 T_{\cup} -  T_{\cup} \circ_2 T_{\cup} \in \textrm{Im}(\de)
$$
and the difference 
$$
T_{\cup}- T^{\opp}_{\cup}
$$
is $\de$-exact. 

Therefore, we have a natural operad map 
\begin{equation}
\label{Com-to-HBr}
\Com \to H^{\bul}(\Br) 
\end{equation}
which sends the generator of $\Com$ to the cohomology class of the $\de$-cocycle: 
\begin{equation}
\label{T-a1a2}
T_{a_1 a_2} : = \frac{1}{2}(T_{\cup} + T^{\opp}_{\cup}).
\end{equation}

It is also easy to check (see Appendix \ref{app:Ger-relation}) that the $\de$-cocycles $T_{a_1 a_2}$ and $T_{\{a_1, a_2\}}$ 
satisfy the Leibniz rule up to homotopy, i.e. 
$$
T_{\{a_1, a_2\}} \circ_2 T_{a_1 a_2} - T_{a_1 a_2} \circ_1 T_{\{a_1, a_2\}} - 
(1,2) \big( T_{a_1 a_2} \circ_2 T_{\{a_1, a_2\}} \big)  \in \textrm{Im}(\de). 
$$ 

Thus, combining the maps \eqref{mj} and \eqref{Com-to-HBr}, we get an operad map 
\begin{equation}
\label{Ger-to-HBr}
\Ger \to H^{\bul}(\Br).  
\end{equation}

In this note, we give a self-contained combinatorial proof of the following theorem:
\begin{thm}
\label{thm:main}
The map \eqref{Ger-to-HBr} is an isomorphism of operads. 
\end{thm}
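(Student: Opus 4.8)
The plan is to prove the statement arity by arity. Since \eqref{Ger-to-HBr} is a morphism of operads, it is an isomorphism precisely when, for every $n$, the induced $S_n$-equivariant map $\Ger(n) \to H^{\bul}(\Br(n))$ is an isomorphism of graded vector spaces. I would first record two finiteness facts. On the one hand, every neutral vertex of a brace tree has at least two children, hence is a branching vertex, while every leaf is labeled; since in any rooted tree the number of branching vertices is at most (number of leaves) $-1$, the number $k$ of neutral vertices satisfies $k \le n-1$, and together with the degree formula $\deg = k - n + 1$ this shows that $\Br(n)$ is a \emph{finite-dimensional} complex concentrated in degrees $1-n \le \deg \le 0$. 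On the other hand, $\Ger(n)$ is concentrated in the same range, with total dimension $n!$ and Poincaré polynomial $\prod_{i=1}^{n-1}(1+it)$ (the coefficient of $t^{j}$ being the number of permutations of $\{1,\dots,n\}$ with $n-j$ cycles); this reflects the identification of $\Ger$ with $\Com\circ\Lie$ as a symmetric sequence. Hence it suffices to show that \eqref{Ger-to-HBr} is surjective in each arity and degree and that $\dim_{\bbK} H^{\bul}(\Br(n))$ has this same Poincaré polynomial; a bijective equivariant linear map is automatically an isomorphism of $S_n$-modules, so nothing further is needed.

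The heart of the argument is a direct combinatorial computation of $H^{\bul}(\Br(n))$ by an explicit contraction of the complex $(\Br(n),\de)$, in the spirit of algebraic discrete Morse theory (iterated Gaussian elimination on the basis of brace trees). I would define a partial matching on the set of brace trees that pairs a tree $T$ with the tree $T'$ obtained from $T$ by a canonical local move creating, resp. absorbing, one neutral vertex --- for instance by splitting off, or merging back, the neutral vertex attached at a canonically chosen location, such as the group of children of the vertex carrying the smallest available label, or the neutral vertex nearest the root. One then checks that in each matched pair the coefficient of $T'$ in $\de(T)$ is invertible, and that the matching is \emph{acyclic} (the induced zig-zag relation is well-founded, e.g. with respect to $k$ together with a secondary combinatorial statistic). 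Standard homological perturbation then yields a deformation retraction of $(\Br(n),\de)$ onto the subquotient spanned by the \emph{critical} (unmatched) brace trees, so that $H^{\bul}(\Br(n))$ is computed by this small complex.

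It remains to identify and enumerate the critical trees. I expect them to be exactly the brace trees in a canonical normal form: trees whose neutral vertices sit at the top and organize the tree into a commutative product of linear ``chain'' pieces, each chain being the representative of a left-normed Lie bracketing. These are naturally indexed by partitions of $\{1,\dots,n\}$ into blocks each carrying a Lie monomial, equivalently by permutations sorted by cycle type, which reproduces exactly the Poincaré polynomial $\prod_{i=1}^{n-1}(1+it)$ and the total count $n!$. Finally I would verify that the residual differential on critical trees vanishes, so that the critical trees are genuine cohomology classes, and that each critical class is the image under \eqref{Ger-to-HBr} of the corresponding $\Ger$-monomial built from $T_{a_1 a_2}$ and $T_{\{a_1,a_2\}}$ using the already-established relations. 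This gives surjectivity, and with the dimension count it forces \eqref{Ger-to-HBr} to be an isomorphism in every arity and degree.

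The main obstacle is the bookkeeping in the contraction step: brace trees carry orientation data (the planar order of edges and the signs attached to neutral vertices), $\de$ is a signed sum of many vertex-splitting terms, and one must simultaneously guarantee invertibility of the matched coefficient, acyclicity of the matching, and $S_n$-equivariance of the whole construction, while ensuring that the resulting critical classes really coincide with the explicitly described images of the $\Ger$-generators. Managing these signs and the termination of the reduction --- rather than any conceptual difficulty --- is where the real work lies.
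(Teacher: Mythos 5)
Your reduction to an arity-wise statement is fine, and your dimension bookkeeping is correct: $\Br(n)$ is indeed concentrated in degrees $1-n\le\bullet\le 0$, and $\Ger(n)$ has Poincar\'e polynomial $\prod_{i=1}^{n-1}(1+it)$, so surjectivity of $H^\bullet(\Psi)$ plus the matching dimension count would finish the proof. But the entire mathematical content of the theorem is concentrated in the step you defer: the construction of an acyclic matching whose critical cells are exactly your ``normal form'' trees, together with the verification that the induced Morse differential vanishes and that the resulting classes are the $\Psi$-images of $\Ger$-monomials. None of this is carried out, and it is not ``bookkeeping'': it is not clear that any matching of the naive local type you suggest (merging/absorbing a neutral vertex at a canonically chosen location) exists with these properties. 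The differential of $\Br$ creates neutral vertices in two distinct ways at each labeled vertex ($\de_j'$ and $\de_j''$) and also splits neutral vertices ($\de_v$), so a single tree $T'$ with one more neutral vertex occurs in $\de(T)$ for many different $T$, and a canonical choice that is simultaneously well-defined, coefficient-invertible and acyclic is precisely the hard combinatorial problem. Moreover, your proposed critical trees are in general \emph{not} $\de$-cocycles (e.g.\ the tree obtained by grafting the chain $1\mbox{-}2$ and the vertex $3$ onto a bottom neutral vertex, which should represent $\{a_1,a_2\}a_3$, has $\de_1'\neq 0$), so identifying the Morse cohomology classes with images of $\Psi$ requires computing the zig-zag correction terms; and the vanishing of the residual differential cannot be deduced from the dimension count without circularity, since the dimension of $H^\bullet(\Br(n))$ is exactly what is being proved.

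For comparison, the paper's proof is genuinely different in structure and shows how much is hidden in your deferred step. It proceeds by induction on the arity $n$, splits $\Br(n)=V_\circ(n)\oplus V_\bullet(n)$ according to whether the lowest non-root vertex is labeled or neutral, computes $H^\bullet(V_\circ(n),\de_0)$ by a dual-complex induction, and computes $H^\bullet(V_\bullet(n),\de_0)$ by filtering on the number of children of the lowest vertex; the identification of the $E_1$-page uses the induction hypothesis $H^\bullet(\Br(j))\cong\Ger(j)$ for $j<n$, the $E_2$-page is identified via the Hochschild homology of a free graded-commutative algebra, and the degeneration $E_2=E_\infty$ is itself a non-trivial statement requiring the separate Appendix~\ref{app:E2-Einfty} argument built on the map $g$ into the shuffle algebra. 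Finally, the comparison of the two pieces through $H^\bullet(\de_1)$ rests on the explicit shuffle combinatorics of Claim~\ref{cl:shuffles} and the presentation \eqref{coLie-V} of $\coLie$ as a quotient by shuffle products. A discrete-Morse contraction, if it exists, would indeed be a nice non-inductive alternative, but as written your proposal replaces all of this by an unproven existence assertion, so it does not constitute a proof.
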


This theorem is a shadow of the very deep statement which says that the dg operad $\Br$ 
is weakly equivalent to the operad $\Ger$. The proof of the latter statement involves a solution of the Deligne 
conjecture and the formality of the dg operad $C_{-\bul}(E_2, \bbK)$ where $E_2$ denotes the topological 
operad of little discs \cite{Dima-disc}. One possible proof  \cite{Dima-disc} of the formality of $C_{-\bul}(E_2, \bbK)$ 
involves the use of Drinfeld's associator \cite{Drinfeld} and another possible proof \cite[Section 3.3]{K-mot}, 
\cite{LV-formality} involves the use of a configuration space integral. Although Theorem \ref{thm:main} does 
not imply the formality of the operad $\Br$, it is amazing that it can be proved in a purely combinatorial way
which bypasses the use of compactified configuration spaces.


We should remark that various topological proofs of Theorem \ref{thm:main} were given earlier. 
One such proof is sketched, for example, in \cite[Theorem 4]{K-Soi}, and another proof 
may be extracted from \cite{M-Smith2}, together with a small computation. 
Finally a third proof is described in \cite{Kauf-Cacti, Kauf-Schwell}.

%
%

%

Let us also remark that our proof admits a straightforward generalization to the higher versions of 
the braces operads $\Br_{n+1}$ acting naturally on the deformation complexes of $n$-algebras, cf. \cite[Section 4]{CaWi}.

\begin{remark}
\label{rem:Br-versus-Graphs}
There is an amazing combinatorial similarity between the dg operad $\Br$ 
and the dg operad $\Graphs$  \cite[Section 3.3]{K-mot}, \cite[Section 3]{SW}.
The latter dg operad is ``assembled from'' graphs of certain kind with some 
additional data and the former dg operad is ``assembled from'' rooted planar trees 
(also with some additional data). Both dg operads are formal. In fact, both 
dg operads are weakly equivalent to the same operad $\Ger$. 
However, while the proof of formality for $\Graphs$ involves only elementary 
homological algebra  \cite[Section 3.3.4]{K-mot}, the proof of formality for $\Br$ requires 
a ``very heavy hammer''.     
\end{remark}

\subsection{The organization of the paper and the outline of the proof of Theorem \ref{thm:main}}
\label{sec:outline}
In Section \ref{sec:notation}, we fix some necessary notational conventions. In Section
\ref{sec:Br-desc}, we give a more detailed description of the dg operad $\Br$. 
In Section \ref{sec:H-Br}, we formulate and prove a more refined version of  Theorem \ref{thm:main} 
(see Theorem \ref{thm:H-Br}). Appendix \ref{app:Ger-relation} is devoted to the proof of 
the fact that the vectors \eqref{T-brack} and \eqref{T-a1a2} of $\Br$ satisfy the Gerstenhaber relations 
up to homotopy. Finally, Appendix \ref{app:E2-Einfty} is devoted to a proof of a technical statement 
about the spectral sequence used in Section \ref{sec:H-Br}.

Using a standard basis for the space $\Ger(n)$ and vectors \eqref{T-brack} and \eqref{T-a1a2}, 
we define a map of dg collections $\Psi : \Ger \to \Br$ (see Section \ref{sec:Psi}). 

Claim \ref{cl:Ger-relations} from Appendix \ref{app:Ger-relation} implies that the map 
$$
H^{\bul} (\Psi) : \Ger \to H^{\bul}(\Br) 
$$
is compatible with the operad structure. 

To prove that the map $H^{\bul} (\Psi)$ induces an isomorphism of operads, 
we proceed by induction on the arity $n$. 

Since the base of the induction $n=1$ is obvious, we assume that $\Psi$ induces isomorphisms 
$H^{\bul}(\Br(j)) \cong \Ger(j)$ for all $1 \le j \le n-1$, we split the graded vector 
space into the direct sum 
\[
\begin{tikzpicture}
\matrix (m) [matrix of math nodes]
{
\Br(n) &= & V_\circ(n) &\oplus & V_\bullet(n)\,, \\ 
};
\draw (m-1-3) edge[bend left, ->] node[above] {$\delta_1$} (m-1-5);
\draw (m-1-3) edge[loop above] node[above] {$\delta_0$} (m-1-3);
\draw (m-1-5) edge[loop above] node[above] {$\delta_0$} (m-1-5);
\end{tikzpicture}
\]
where $V_\bullet(n)$ is the subspace of $\Br(n)$ spanned by brace trees whose lowest non-root vertex
is neutral and $V_\circ(n)$ is the subspace of $\Br(n)$ spanned by brace trees whose lowest non-root vertex 
is labeled. The arrows in the above formula indicate the non-zero components of the differential.

It is clear that 
\begin{itemize}

\item both $V_\circ(n)$ and $V_{\bul}(n)$ may be considered as 
cochain complexes with the differential $\de_0$; 

\item $\de_1$ induces a map 
\begin{equation}
\label{main-map}
H^{\bul}(\delta_1) ~:~
H^{\bul}(V_\circ(n), \delta_0) \longrightarrow H^{\bul}(V_\bullet(n), \delta_0)\,; 
\end{equation}

\item and, finally, 
$$
H^{\bul}(\Br(n)) \cong \big( \ker H^{\bul}(\delta_1) \big) \oplus \big( \coker H^{\bul}(\delta_1) \big).
$$

\end{itemize}

In Section \ref{sec:H-V-circ}, we prove that $H^{\bul}(V_\circ(n), \delta_0)$ is isomorphic to
$\bs^{n-1} \bbK[S_n]$ as the $S_n$-module and show that the cohomology class corresponding 
to $\la \in S_n$ is represented by the brace tree $T^n_{\la}$ depicted in figure \ref{fig:T-la}.

In Section \ref{sec:H-V-bul}, we establish an isomorphism of $S_n$-modules
\begin{equation}
\label{isom-HV-bullet}
H^{\bul}(V_\bullet(n), \delta_0) \cong   \Com \odot \La \Lie (n) \big/ \La \Lie (n)  ~\oplus ~
\bs \big( \La\Com \odot \La \Lie (n) \big/ \La \Lie (n) \big),
\end{equation}
where $\odot$ denotes the plethysm of collections. 

This is done by filtering $V_{\bul}(n)$ by the number of children of the lowest non-root vertex
and analyzing the corresponding spectral sequence. The main technical statement 
$$
E_{\infty} (V_\bullet(n), \delta_0) = E_{2} (V_\bullet(n), \delta_0)
$$ 
about this spectral sequence is proved separately (see Lemma \ref{lem:specseqabutment}) 
in Appendix \ref{app:E2-Einfty}.


In Section \ref{sec:H-de-1}, we prove a technical statement about the dual version of 
the map \eqref{main-map}. Finally, in Section \ref{sec:the-end}, we use this technical statement 
and the results of the previous sections to complete the proof of Theorem \ref{thm:H-Br}. 

~\\[-0.5cm]

\noindent
\textbf{Acknowledgements:} V.D. has been partially supported by NSF grants DMS-1161867 and 
DMS-1501001 and T.W. has been partially supported by the Swiss National Science foundation, 
grant 200021\_150012, and the SwissMAP NCCR funded by the Swiss National Science foundation.
We would like to thank anonymous referees for their useful comments and suggestions. 

\section{Notation}
\label{sec:notation}

We work over a ground field $\bbK$ of characteristic 0. 
For a set $X$ we denote by $\vsspan_{\bbK}(X)$ the $\bbK$-vector 
space of finite linear combinations of elements in $X$.
We denote by $\bs$ (resp. $\bs^{-1}$) the operation of suspension (resp. desuspension)
for graded or differential graded (\emph{dg} for short) $\bbK$ vector spaces.  
The notation $|v|$ is reserved for the degree of a homogeneous vector $v$ in 
a (differential) graded vector space. 

By a {\it collection} we mean a sequence $\{ P(n) \}_{n \ge 0}$ 
of dg vector spaces with a right action of the symmetric group $S_n$\,.  
The category of collections carries a natural monoidal structure, the plethysm operation $\odot$, see, e.~g., \cite[eqn. (5.1)]{DW}.

We will freely use the language of operads. A good introduction is provided in textbook \cite{Loday-Vallette}.
The notation $\Lie$ (resp. $\As$, $\Com$, $\Ger$) is used for the 
operad governing Lie algebras (resp. associative, commutative or Gerstenhaber algebras without unit).
Dually, the notation $\coLie$ (resp. $\coAs$, $\coCom$) is reserved for the 
cooperad governing Lie coalgebras (resp.  coassociative coalgebras without 
counit, cocommutative (and coassociative)
coalgebras without counit).

For an operad $\cO$ (resp. a cooperad $\cC$) and a cochain complex $V$, we denote by 
$\cO(V)$ (resp. $\cC(V)$) the free $\cO$-algebra (resp. cofree $\cC$-coalgebra). 

For an operad (resp. a cooperad) $P$ we denote by $\La P$ the 
operad (resp. the cooperad) with the spaces of $n$-ary operations: 
\begin{equation}
\label{La-P}
\La P (n) = \bs^{1-n} P(n) \otimes \sgn_n\,,
\end{equation}
where $\sgn_n$ denotes the sign representation of $S_n$\,.

For an operad $\cO$ and degree $0$ auxiliary variables 
$ a_1, a_2, \dots, a_n $, 
$\cO(n)$ is naturally identified with the subspace of the free 
$\cO$-algebra 
$$
\cO\big( \vsspan_{\bbK}(a_1, a_2, \dots, a_n) \big) 
$$
spanned by $\cO$-monomials in which each variable from the set 
$\{ a_1, a_2, \dots, a_n \} $ 
appears exactly once. We often use this identification 
in this paper. For example, the vector space $\Ger(2)$ of the operad $\Ger$ is spanned 
by the degree zero vector $a_1 a_2$ and the degree $-1$ vector $\{a_1, a_2\}$\,.
The commutative (and associative) multiplication on a Gerstenhaber algebra $V$
comes from the vector $a_1 a_2 \in \Ger(2)$ and the odd Lie bracket 
$\{~,~\}$ on $V$ comes from the vector $\{a_1, a_2\} \in \Ger(2)$. 
Similarly, the space $\La\Lie(n)$ of the suboperad $\La\Lie \subset \Ger$ is 
spanned by $\La\Lie$-monomials in $a_1, a_2, \dots, a_n$ in which each 
variable  from the set $\{ a_1, a_2, \dots, a_n \} $ appears exactly once.
For example,  $\La\Lie(2)$ is spanned by the vector $\{a_1, a_2\}$ and 
$\La\Lie(3)$ is spanned by the vectors $\{\{a_1, a_2\}, a_3\}$ and 
$\{\{a_1, a_3\}, a_2\}$.

Let us recall \cite[Section 2, p. 32]{notes} that the set of edges of any planar tree $T$ is equipped with 
the natural total order. We use this total order to determine sign factors in various computations 
related to the operad $\Br$. 

\section{Brace trees, a reminder of the dg operad $\Br$}
\label{sec:Br-desc}
Let us recall that a \emph{brace tree} is a rooted planar tree having two kinds of non-root vertices:
\begin{itemize}
\item \emph{labeled} vertices, numbered $\{1,2,3, \dots \}$,
\item an arbitrary number of unlabeled \emph{neutral} vertices. 
\end{itemize}
In addition, one requires that each neutral vertex has at least two children. 
For example, figure \ref{fig:exam} shows a brace tree $T$ with $6$ labeled vertices.
In pictures, white circles with inscribed numbers denote labeled vertices, black 
circles denote neutral vertices, and the small black node (at the bottom) denotes the root.  
\begin{figure}[htp] 
\centering 
\begin{tikzpicture}
[lab/.style={draw, circle, minimum size=5, inner sep=1}, 
n/.style={draw, circle, fill, minimum size=5, inner sep=1}, scale=.7]
\draw (1,0) node[lab] (v3) {$\scriptstyle 3$} 
(0,1)  node[n] (n1) {} 
(2,1) node[lab] (v2) {$\scriptstyle 2$} 
(1,-0.7) node[root] (r) {};
\draw (-1,2) node[lab] (v1) {$\scriptstyle 1$}
(0, 2) node[n] (n2) {}  (1,2) node[lab] (v4) {$\scriptstyle 4$};
\draw (-0.5,3) node[lab] (v6) {$\scriptstyle 6$}
(0.5,3) node[lab] (v5) {$\scriptstyle 5$};
\draw (v3) edge (v2) edge (n1) edge (r) 
(n1) edge (v1) edge (v4) edge (n2) (n2) edge (v6) edge (v5);
\end{tikzpicture}
\caption{An example of a brace tree} \label{fig:exam}
\end{figure}
 
$\Br(n)$ is the linear span of the set of brace trees with exactly $n$ labeled vertices.
The $\bbZ$-grading on $\Br(n)$ is given by declaring that each brace tree has 
the degree
$$
2  \times \#\textrm{ of neutral vertices }~  - ~ \# \textrm{ of non-root edges}.
$$ 
For example, the brace tree shown in figure \ref{fig:exam} has degree $-3$.  

Let $T$ be a brace tree with $n$ labeled vertices, $j$ be a number between 
$1$ and $n$, and $v$ be a neutral vertex of $T$ (if $T$ has one). 
To recall the definition of the differential $\de$ on $\Br(n)$, we introduce these 
three vectors
$$
\de'_j (T), \qquad  \de''_j (T), \quad \textrm{and} \quad \de_v(T)
$$ 
in $\Br(n)$. The vector $\de'_j (T)$ (resp. $\de''_j(T)$) is obtained from $T$ in the three steps: 
\begin{itemize}

\item first, we replace vertex $j$ by the left most branch in figure \ref{fig:branches} (resp. the middle brach in 
figure \ref{fig:branches}); 

\item  second, we reconnect the edges which originated from vertex $j$ to 
this branch in all ways compatible with the planar structure; 

\item finally, we discard all brace trees which have a neutral vertex of valency $< 3$.  

\end{itemize}
  
Similarly, the vector $\de_v(T)$ is obtained from $T$ in the three steps: 
\begin{itemize}

\item first, we replace the neutral vertex $v$ with the right most branch in figure \ref{fig:branches}; 

\item second, we reconnect the edges which originated from vertex $v$ to 
this branch in all ways compatible with the planar structure;

\item finally, we discard all brace trees which have a neutral vertex of valency $< 3$.  

\end{itemize}
\begin{figure}[htp] 
\centering 
\begin{minipage}[t]{0.3\linewidth}
\centering
\begin{tikzpicture}[ baseline=-.65ex]
\draw (0,0) node[n] (n) {~};
\draw (0,0.6) node[lab] (j) { $\scriptstyle j$};
\draw (n) edge (j) edge (0,-0.4);
\end{tikzpicture}
\end{minipage}
~
\begin{minipage}[t]{0.3\linewidth}
\centering
\begin{tikzpicture}[ baseline=-.65ex]
\draw (0,0.6) node[n] (n) {~};
\draw (0,0) node[lab] (j) { $\scriptstyle j$};
\draw (j) edge (n) edge (0,-0.4);
\end{tikzpicture}
\end{minipage}
~
\begin{minipage}[t]{0.3\linewidth}
\centering
\begin{tikzpicture}[ baseline=-.65ex]
\draw (0,0) node[n] (n1) {~};
\draw (0,0.6) node[n] (n2) {~};
\draw (n1) edge (n2) edge (0,-0.4);
\end{tikzpicture}
\end{minipage}
\caption{The branches appearing in the definition of the differential} \label{fig:branches}
\end{figure} 

The differential $\de(T)$ of a brace tree $T \in \Br(n)$ is the sum 
over all labeled and all neutral vertices 
$$
\de(T) = \sum_{j=1}^n (\de'_j(T) + \de''_j(T)) ~ + ~ \sum_{v} \de_v (T). 
$$

The signs in the sums $\de'_j(T)$, $\de''_j(T)$, and $\de_v(T)$ are determined by treating non-root edges 
as ``anti-commuting variables.''

%
%
\begin{figure}[htp] 
\centering 
\begin{minipage}[t]{\linewidth}
\centering 
\begin{tikzpicture}
[lab/.style={draw, circle, minimum size=5, inner sep=1}, 
n/.style={draw, circle, fill, minimum size=5, inner sep=1}, scale=.7]
\draw  (-2,1) node { $\de$};
\draw (1,0) node[lab] (v3) {$\scriptstyle 3$} 
(0,1)  node[n] (n1) {} 
(2,1) node[lab] (v2) {$\scriptstyle 2$} 
(1,-0.7) node[root] (r) {};
\draw (-1,2) node[lab] (v1) {$\scriptstyle 1$}
(0, 2) node[n] (n2) {}  (1,2) node[lab] (v4) {$\scriptstyle 4$};
\draw (-0.5,3) node[lab] (v6) {$\scriptstyle 6$}
(0.5,3) node[lab] (v5) {$\scriptstyle 5$};
\draw (v3) edge (v2) edge (n1) edge (r) 
(n1) edge (v1) edge (v4) edge (n2) (n2) edge (v6) edge (v5);
\draw  (4,1) node { $=$};
\begin{scope}[shift={(5.5,0)}]
\draw (1,0.7) node[lab] (vv3) {$\scriptstyle 3$} (1,0)  node[n] (nn1) {} (1,-0.7) node[root] (rr) {};
\draw (nn1) edge (vv3) edge (rr); 
\draw (1,0.4) node[draw, color=gray, circle, minimum size=35, inner sep=1] (big) {~}; 
\draw (0,2) node[n] (nn2) {} (2,2) node[lab] (vv2) {$\scriptstyle 2$} ;
\draw (-1,3) node[lab] (vv1) {$\scriptstyle 1$} (0, 3) node[n] (nn3) {}  (1,3) node[lab] (vv4) {$\scriptstyle 4$};
\draw (-0.5,4) node[lab] (vv6) {$\scriptstyle 6$} (0.5,4) node[lab] (vv5) {$\scriptstyle 5$};
\draw (big) edge (nn2) edge (vv2)  (nn2) edge (vv1) edge (nn3) edge (vv4) (nn3) edge (vv6) edge (vv5);
\end{scope}
\draw  (9,1) node { $+$};
\begin{scope}[shift={(11,0)}]
\draw (1,0) node[lab] (vv3) {$\scriptstyle 3$} (1,0.7)  node[n] (nn1) {} (1,-0.7) node[root] (rr) {};
\draw (vv3) edge (nn1) edge (rr); 
\draw (1,0.4) node[draw, color=gray, circle, minimum size=35, inner sep=1] (big) {~}; 
\draw (0,2) node[n] (nn2) {} (2,2) node[lab] (vv2) {$\scriptstyle 2$} ;
\draw (-1,3) node[lab] (vv1) {$\scriptstyle 1$} (0, 3) node[n] (nn3) {}  (1,3) node[lab] (vv4) {$\scriptstyle 4$};
\draw (-0.5,4) node[lab] (vv6) {$\scriptstyle 6$} (0.5,4) node[lab] (vv5) {$\scriptstyle 5$};
\draw (big) edge (nn2) edge (vv2)  (nn2) edge (vv1) edge (nn3) edge (vv4) (nn3) edge (vv6) edge (vv5);
\end{scope}
\draw  (14,1) node { $-$};
\begin{scope}[shift={(16,0)}]
\draw (1,0) node[lab] (v3) {$\scriptstyle 3$} 
(0,1)  node[n] (n1) {} 
(2,1) node[lab] (v2) {$\scriptstyle 2$} 
(1,-0.7) node[root] (r) {};
\draw (-1,3) node[lab] (v1) {$\scriptstyle 1$}
(0, 1.8) node[n] (n2) {}  (0, 3) node[n] (n3) {}  
(1,3) node[lab] (v4) {$\scriptstyle 4$};
\draw (-0.5,4) node[lab] (v6) {$\scriptstyle 6$}
(0.5,4) node[lab] (v5) {$\scriptstyle 5$};
\draw (0,1.5) node[draw, color=gray, circle, minimum size=35, inner sep=1] (big) {~}; 
\draw (v3) edge (v2) edge (n1) edge (r) 
(n1) edge (n2)  (n3) edge (v6) edge (v5)
(big) edge (v1) edge (v4) edge (n3);
\end{scope}
\draw  (19,1) node { $=$};
\end{tikzpicture}
\end{minipage}
\begin{minipage}[t]{\linewidth}
\vspace{0.5cm}
\end{minipage}
\begin{minipage}[t]{\linewidth}
\centering 
\begin{tikzpicture}
[lab/.style={draw, circle, minimum size=5, inner sep=1}, 
n/.style={draw, circle, fill, minimum size=5, inner sep=1}, scale=.7]
\draw (1,0)  node[n] (n1) {} (1,-0.7) node[root] (rr) {};
\draw (0,0.7) node[lab] (v3) {$\scriptstyle 3$};
\draw (1,1) node[n] (n2) {}; 
\draw (2,0.7) node[lab] (v2) {$\scriptstyle 2$};
\draw (0,2) node[lab] (v1) {$\scriptstyle 1$};
\draw (1, 2) node[n] (n3) {} ; 
\draw (2,2) node[lab] (v4) {$\scriptstyle 4$};
\draw (0.5,3) node[lab] (v6) {$\scriptstyle 6$}; 
\draw (1.5,3) node[lab] (v5) {$\scriptstyle 5$};
\draw (n1) edge (v3) edge (rr) edge (n2) edge (v2) (n2) edge (v1) edge (n3) edge (v4) (n3) edge (v6) edge (v5);
\draw  (3.5,1) node { $+$};
\begin{scope}[shift={(5,0)}]
\draw (0.5,0.7) node[lab] (v3) {$\scriptstyle 3$}; 
\draw (1,0)  node[n] (n1) {} (1,-0.7) node[root] (rr) {};
\draw (1.5,0.7) node[lab] (v2) {$\scriptstyle 2$};
\draw (0.5,1.5) node[n] (n2) {};
\draw (-0.5,2.5) node[lab] (v1) {$\scriptstyle 1$};
\draw (0.5, 2.5) node[n] (n3) {}; 
\draw (1.5,2.5) node[lab] (v4) {$\scriptstyle 4$};
\draw (0,3.5) node[lab] (v6) {$\scriptstyle 6$}; 
\draw (1,3.5) node[lab] (v5) {$\scriptstyle 5$};
\draw (n1) edge (rr) edge (v3) edge (v2);
\draw (n2) edge (v3) edge (v1) edge (n3) edge (v4);
\draw (n3) edge (v5) edge (v6);
\end{scope}
\draw  (8,1) node { $+$};
\begin{scope}[shift={(9.7,0)}]
\draw (1,0)  node[n] (n1) {} (1,-0.7) node[root] (rr) {};
\draw (1.6,1) node[lab] (v3) {$\scriptstyle 3$}; 
\draw (2.3,0.7) node[lab] (v2) {$\scriptstyle 2$};
\draw (0,0.7) node[n] (n2) {};
\draw (-0.6,1.5) node[lab] (v1) {$\scriptstyle 1$};
\draw (0, 1.5) node[n] (n3) {}; 
\draw (0.6,1.5) node[lab] (v4) {$\scriptstyle 4$};
\draw (-0.5,2.5) node[lab] (v6) {$\scriptstyle 6$}; 
\draw (0.5,2.5) node[lab] (v5) {$\scriptstyle 5$};
\draw (n1) edge (rr) edge (v3) edge (v2) edge (n2);
\draw (n2) edge (v1) edge (n3) edge (v4);
\draw (n3) edge (v5) edge (v6);
\end{scope}
\draw  (13.5,1) node { $+$};
\begin{scope}[shift={(15,0)}]
\draw (1,0)  node[n] (n1) {} (1,-0.7) node[root] (rr) {};
\draw (1.6,0.7) node[lab] (v3) {$\scriptstyle 3$}; 
\draw (1.6,1.5) node[lab] (v2) {$\scriptstyle 2$};
\draw (0,0.7) node[n] (n2) {};
\draw (-0.6,1.5) node[lab] (v1) {$\scriptstyle 1$};
\draw (0, 1.5) node[n] (n3) {}; 
\draw (0.6,1.5) node[lab] (v4) {$\scriptstyle 4$};
\draw (-0.5,2.5) node[lab] (v6) {$\scriptstyle 6$}; 
\draw (0.5,2.5) node[lab] (v5) {$\scriptstyle 5$};
\draw (n1) edge (rr) edge (v3) edge (n2) (v3) edge (v2);
\draw (n2) edge (v1) edge (n3) edge (v4);
\draw (n3) edge (v5) edge (v6);
\end{scope}
\draw  (18,1) node { $-$};
\begin{scope}[shift={(20,0)}]
\draw (1,0)  node[n] (n1) {} (1,-0.7) node[root] (rr) {};
\draw (1.4,1) node[lab] (v2) {$\scriptstyle 2$};
\draw (2.3,0.7) node[lab] (v3) {$\scriptstyle 3$}; 
\draw (0,0.7) node[n] (n2) {};
\draw (-0.6,1.5) node[lab] (v1) {$\scriptstyle 1$};
\draw (0, 1.5) node[n] (n3) {}; 
\draw (0.6,1.5) node[lab] (v4) {$\scriptstyle 4$};
\draw (-0.5,2.5) node[lab] (v6) {$\scriptstyle 6$}; 
\draw (0.5,2.5) node[lab] (v5) {$\scriptstyle 5$};
\draw (n1) edge (rr) edge (v3) edge (n2) edge (v2);
\draw (n2) edge (v1) edge (n3) edge (v4);
\draw (n3) edge (v5) edge (v6);
\end{scope}
\end{tikzpicture}
\end{minipage}
\begin{minipage}[t]{\linewidth}
\vspace{0.5cm}
\end{minipage}
\begin{minipage}[t]{\linewidth}
\centering 
\begin{tikzpicture}
[lab/.style={draw, circle, minimum size=5, inner sep=1}, 
n/.style={draw, circle, fill, minimum size=5, inner sep=1}, scale=.7]
\draw  (-1.5,1) node { $+$};
\draw (1,0) node[lab] (v3) {$\scriptstyle 3$} (1,-0.7) node[root] (rr) {};
\draw (1,0.8)  node[n] (n1) {};
\draw (0,1.5) node[n] (n2) {}; 
\draw (1.5,1.5) node[lab] (v2) {$\scriptstyle 2$};
\draw (-1,2.5) node[lab] (v1) {$\scriptstyle 1$};
\draw (0, 2.5) node[n] (n3) {} ; 
\draw (1,2.5) node[lab] (v4) {$\scriptstyle 4$};
\draw (-0.5,3.5) node[lab] (v6) {$\scriptstyle 6$}; 
\draw (0.5,3.5) node[lab] (v5) {$\scriptstyle 5$};
\draw (v3) edge (rr) edge (n1) (n1) edge (n2) edge (v2) (n2) edge (v1) edge (n3) edge (v4) (n3) edge (v6) edge (v5);
\draw  (3,1) node { $-$};
\begin{scope}[shift={(4.5,0)}]
\draw (1,0) node[lab] (v3) {$\scriptstyle 3$} (1,-0.7) node[root] (rr) {};
\draw (1.5,0.7) node[lab] (v2) {$\scriptstyle 2$};
\draw (0.5,0.7) node[n] (n1) {}; 
\draw (0, 1.5) node[n] (n2) {};
\draw (1,1.5) node[lab] (v4) {$\scriptstyle 4$};
\draw (-0.5,2.2) node[lab] (v1) {$\scriptstyle 1$};
\draw (0.5, 2.2) node[n] (n3) {};
\draw (0,3) node[lab] (v6) {$\scriptstyle 6$};
\draw (1,3) node[lab] (v5) {$\scriptstyle 5$};
\draw (v3) edge (rr) edge (v2) edge (n1) (n1) edge (n2) edge (v4) (n2) edge (v1) edge (n3) (n3) edge (v6) edge (v5) ;
\end{scope}
\draw  (7.5,1) node { $+$};
\begin{scope}[shift={(9,0)}]
\draw (1,0) node[lab] (v3) {$\scriptstyle 3$} (1,-0.7) node[root] (rr) {};
\draw (1.5,0.7) node[lab] (v2) {$\scriptstyle 2$};
\draw (0.5,0.7) node[n] (n1) {}; 
\draw (0,1.5) node[lab] (v1) {$\scriptstyle 1$};
\draw (1,1.5) node[n] (n2) {};  
\draw (1.5, 2.2) node[lab] (v4) {$\scriptstyle 4$};
\draw (0.5,2.2) node[n] (n3) {};
\draw (0, 3) node[lab] (v6) {$\scriptstyle 6$};
\draw (1, 3) node[lab] (v5) {$\scriptstyle 5$};
\draw (v3) edge (rr) edge (v2) edge (n1)  (n1) edge (v1) edge (n2) (n2) edge (n3) edge (v4) (n3) edge (v6) edge (v5);
\end{scope}
\end{tikzpicture}
\end{minipage}
\caption{Example of computing $\de(T)$} \label{fig:exam-diff}
\end{figure}

For example, for the brace tree $T$ shown in figure \ref{fig:exam}, the computation of the differential 
is shown in figure \ref{fig:exam-diff}. The sign\footnote{We should remark that the differential $\pa$ defined in 
eq. (8.12) of \cite{DW} differs from $\de$ by the overall sign factor: $\de = -\pa$.} 
``$-$'' in front of the right most term in the first line 
appears to due to the fact the additional edge has to ``move behind''  the edge originating 
from vertex $3$. 
The signs in front of the first four terms in the second line are pluses since 
the brach which originates at vertex $3$ of $T$ has the even number of edges. The sign ``$-$'' in 
front of the right most term in the second line appears because the edge adjacent to vertex $2$ ``moves 
ahead'' of the additional edge. 
The signs in the third line are obtained in the similar fashion.

Let us observe that, since we discard brace trees with at least one neutral vertex of valency $\le 2$, we have
$$
\de'_j(T) = \de''_j(T) = 0 \qquad \textrm{and} \qquad \de_v(T) = 0
$$
if vertex $j$ is univalent and (neutral) vertex $v$ is trivalent.  
Also, if vertex $j$ is bivalent, then $\de''_j(T) = 0$.

%
%
\begin{figure}[htp] 
\centering 
\begin{minipage}[t]{\linewidth}
\centering 
\begin{tikzpicture}
[lab/.style={draw, circle, minimum size=5, inner sep=1}, 
n/.style={draw, circle, fill, minimum size=5, inner sep=1}, scale=.7]
\draw (0,0) node[lab] (v2) {$\scriptstyle 2$} (0,-0.7) node[root] (rr) {};
\draw (0,1) node[lab] (v1) {$\scriptstyle 1$};
\draw (v2) edge (rr) edge (v1);
\draw  (1.5,0) node { $\circ_2$};
\begin{scope}[shift={(3,0)}]
\draw (0,0) node[n] (n) {} (0,-0.7) node[root] (rr) {};
\draw (-0.5,0.7) node[lab] (v1) {$\scriptstyle 1$};
\draw (0.5,0.7) node[lab] (v2) {$\scriptstyle 2$};
\draw (n) edge (rr) edge (v1) edge (v2);
\end{scope}
\draw  (4.2,0) node { $ = $};
\begin{scope}[shift={(6,-0.5)}]
\draw (0,0) node[n] (n) {} (0,-0.7) node[root] (rr) {};
\draw (-0.5,0.7) node[lab] (v2) {$\scriptstyle 2$};
\draw (0.5,0.7) node[lab] (v3) {$\scriptstyle 3$};
\draw (0,0.5) node[draw, color=gray, circle, minimum size=38, inner sep=1] (big) {~}; 
\draw (0,2) node[lab] (v1) {$\scriptstyle 1$};
\draw (big) edge (v1) (n) edge (rr) edge (v2) edge (v3);
\end{scope}
\draw  (7.5,0) node { $ = $};
\end{tikzpicture}
\end{minipage}
\begin{minipage}[t]{\linewidth}
\vspace{0.3cm}
\end{minipage}
\begin{minipage}[t]{\linewidth}
\centering 
\begin{tikzpicture}
[lab/.style={draw, circle, minimum size=5, inner sep=1}, 
n/.style={draw, circle, fill, minimum size=5, inner sep=1}, scale=.7]
\draw (0,0) node[n] (n) {} (0,-0.7) node[root] (rr) {};
\draw (-0.7,0.7) node[lab] (v1) {$\scriptstyle 1$};
\draw (0,0.7) node[lab] (v2) {$\scriptstyle 2$};
\draw (0.7,0.7) node[lab] (v3) {$\scriptstyle 3$};
\draw (n) edge (rr) edge (v1) edge (v2) edge (v3);
\draw  (1.5,0) node { $-$};
\begin{scope}[shift={(3,0)}]
\draw (0,0) node[n] (n) {} (0,-0.7) node[root] (rr) {};
\draw (-0.5,0.7) node[lab] (v2) {$\scriptstyle 2$};
\draw (0.5,0.7) node[lab] (v3) {$\scriptstyle 3$};
\draw (-0.5,1.5) node[lab] (v1) {$\scriptstyle 1$};
\draw (n) edge (rr) edge (v2) edge (v3) (v2) edge (v1);
\end{scope}
\draw  (4.5,0) node { $-$};
\begin{scope}[shift={(6,0)}]
\draw (0,0) node[n] (n) {} (0,-0.7) node[root] (rr) {};
\draw (-0.7,0.7) node[lab] (v2) {$\scriptstyle 2$};
\draw (0,0.7) node[lab] (v1) {$\scriptstyle 1$};
\draw (0.7,0.7) node[lab] (v3) {$\scriptstyle 3$};
\draw (n) edge (rr) edge (v1) edge (v2) edge (v3);
\end{scope}
\draw  (7.5,0) node { $+$};
\begin{scope}[shift={(9,0)}]
\draw (0,0) node[n] (n) {} (0,-0.7) node[root] (rr) {};
\draw (-0.5,0.7) node[lab] (v2) {$\scriptstyle 2$};
\draw (0.5,0.7) node[lab] (v3) {$\scriptstyle 3$};
\draw (0.5,1.5) node[lab] (v1) {$\scriptstyle 1$};
\draw (n) edge (rr) edge (v2) edge (v3) (v3) edge (v1);
\end{scope}
\draw  (10.5,0) node { $+$};
\begin{scope}[shift={(12,0)}]
\draw (0,0) node[n] (n) {} (0,-0.7) node[root] (rr) {};
\draw (-0.7,0.7) node[lab] (v2) {$\scriptstyle 2$};
\draw (0,0.7) node[lab] (v3) {$\scriptstyle 3$};
\draw (0.7,0.7) node[lab] (v1) {$\scriptstyle 1$};
\draw (n) edge (rr) edge (v1) edge (v2) edge (v3);
\end{scope}
\end{tikzpicture}
\end{minipage}
\caption{A computation of an elementary insertion} \label{fig:insert}
\end{figure}

A simple example of the computation of an elementary insertion is shown in figure \ref{fig:insert}. The sign ``$-$''
in front of the second term and the third term appears since the edge adjacent to vertex $1$ has to ``move behind'' 
the edge connecting vertex $2$ to the only neutral vertex. In the last two terms, the edge adjacent to vertex $1$ has to
``move behind'' the two edges originating from the only neutral vertex. This is why we have pluses in front of these terms. 
For the precise definition of  the operadic compositions in $\Br$, we refer the reader to \cite[Sections 7-9]{DW}.

%

\subsection{Remarks on the linear dual of $\Br$}
\label{sec:Br-dual}

Let us observe that the linear dual $\Br(n)^*$ can be canonically identified
with $\Br(n)$ as the vector space. The only difference is that 
the degree of a brace tree in $\Br(n)^*$ equals $\#$ of non-root edges $-$ $2 \times \#$ of neutral vertices. 
Using this observation, we will often switch back and forth between various subspaces  
of $\Br(n)$ (with certain differentials) and their linear duals. 

For example, the differential $\de^*$ on the dual complex $\Br(n)^*$ is the sum (with appropriate signs)
$$
\de^*(T) : = \sum_{e \in \mathrm{Edges}_{\bul}(T)} \pm \de^*_e (T), 
$$
where the brace tree $ \de^*_e (T)$ is obtained from $T$ by contracting the edge $e$ 
and the set $\mathrm{Edges}_{\bul}(T)$ consists of non-root edges $e$ which satisfy this property:
{\it $e$ either connects two neutral vertices or $e$ is adjacent to one neutral vertex.}
For instance, for the brace trees shown in figure \ref{fig:examples}, we have 
$$
\de^* (T_{\cup}) = T_{1\mbox{-}2}  -  T_{2\mbox{-}1}\,.
$$
On the other hand, if $T$ is any brace tree without neutral vertices then $\de^*(T) = 0$. 

\section{Computation of the cohomology of \texorpdfstring{$\Br$}{Br}}
\label{sec:H-Br}

\subsection{The map of collections of dg vector spaces $\Psi : \Ger \to \Br$}
\label{sec:Psi}

Let us recall (see Appendix \ref{app:Ger-relation}) that the assignment 
$$
\mj(\{a_1, a_2\}) : = T_{\{a_1, a_2\}}
$$
gives us the map of dg operad 
\begin{equation}
\label{mj-here}
\mj : \La\Lie \to \Br,
\end{equation}
where $\La\Lie$ is considered with the zero differential. 

We will use $\mj$ to define a map $\Psi$ of collections
\begin{equation}
\label{Psi}
\Psi : \Ger \to \Br. 
\end{equation}
For this purpose, we recall \cite[Exercise 3.12]{notes} that $\Ger(n)$ has the basis 
formed by the monomials 
\begin{equation}
\label{Ger-n-basis}
\{ a_{i_{11}},  \dots, \{ a_{i_{1 (p_1-1)}}, a_{i_{1 p_1}} \br \dots
\{ a_{i_{t1}},  \dots, \{ a_{i_{t (p_t-1)}}, a_{i_{t p_t}} \br,
\end{equation}
where  
\begin{equation}
\label{sp-partition}
\{i_{11}, i_{12}, \dots, i_{1 p_1}\} \sqcup 
\{i_{21}, i_{22}, \dots, i_{2 p_2}\} \sqcup \dots \sqcup \{i_{t1}, i_{t 2}, \dots, i_{t p_t}\}
\end{equation}
are ordered partitions of the set $\{1, 2, \dots, n\}$ satisfying the following properties: 
\begin{itemize}

\item for each $1 \le \beta \le t$ the index $i_{\beta p_{\beta}}$ is 
the biggest among $i_{\beta 1}, \dots, i_{\beta p_{\beta} }$

\item $i_{1 p_1}  <  i_{2 p_2} < \dots <  i_{t p_t}$ (in particular, $i_{t p_t} = n$).

\end{itemize}

Let $\si$ be the permutation in $S_n$ 
$$
\si = 
\left(
\begin{array}{cccccccccccccc}
1 & 2 & \dots & p_1 & p_1+1 & p_1+2 & \dots &  p_1+ p_2 & \dots & \dots &
n-p_t+1 &  n- p_t+2 & \dots & n  \\
i_{11} & i_{12} & \dots & i_{1 p_1} & i_{21} & i_{22} & \dots & i_{2 p_2} & \dots & \dots &
i_{t1} & i_{t 2} & \dots & i_{t p_t}
\end{array}
\right)
$$
corresponding to such a partition \eqref{sp-partition}. 

Then, for the corresponding monomial \eqref{Ger-n-basis} in the above basis,
we set\footnote{Here, we assume that $t \ge 2$.} 
\begin{equation}
\label{Psi-dfn}
\Psi(\{ a_{i_{11}},  \dots, \{ a_{i_{1 (p_1-1)}}, a_{i_{1 p_1}} \br \dots
\{ a_{i_{t1}},  \dots, \{ a_{i_{t (p_t-1)}}, a_{i_{t p_t}} \br) : = 
\end{equation}
$$
\si\big( \Psi( \{ a_{1},  \dots, \{ a_{p_1-1}, a_{p_1} \br  \{ a_{p_1+1},  \dots, \{ a_{p_1 + p_2-1 }, a_{p_1 + p_2} \br \dots
\{ a_{n-p_t+1},  \dots, \{ a_{n-1}, a_{n} \br) \big),
$$

$$
 \Psi( \{ a_{1},  \dots, \{ a_{p_1-1}, a_{p_1} \br  \{ a_{p_1+1},  \dots, \{ a_{p_1 + p_2-1 }, a_{p_1 + p_2} \br\dots
\{ a_{n-p_t+1},  \dots, \{ a_{n-1}, a_{n} \br) : =
$$
$$
\mu \big( \cM_{t};  \mj( \{ a_{1},  \dots, \{ a_{p_1-1}, a_{p_1} \br), \mj (\{ a_{1},  \dots, \{ a_{p_2-1 }, a_{p_2} \br),
\dots, \mj(\{ a_{1},  \dots, \{ a_{p_t-1}, a_{p_t} \br)  \big),   
$$
where $\mu$ is the operadic multiplication $\Br(t) \otimes (\Br(p_1) \otimes \Br(p_2) \otimes \dots \otimes \Br(p_t)) 
\to \Br(p_1+ p_2 + \dots + p_t)$, and $\cM_{t}$ is the vector 
\begin{equation}
\label{cM-t}
\cM_t : = \underbrace{(..(T_{a_1 a_2} \circ_1 T_{a_1 a_2}) \circ_1  T_{a_1 a_2}) \dots \circ_1 T_{a_1 a_2})}_{\circ_1 ~~\textrm{appears }t-2 \textrm{ times}} \in \Br(t). 
\end{equation}

Finally, if $t =1$, i.e. we deal with a monomial $v \in \La\Lie(n)$, then we set 
\begin{equation}
\label{Psi-dfn-Lie}
\Psi(v) : =  \mj(v).
\end{equation}

For example, the vector $\cM_3 =  T_{a_1 a_2} \circ_1 T_{a_1 a_2}$ is shown in figure \ref{fig:cM3}
and the vector $\Psi(a_1 a_2 \{a_3, a_4\}) \in \Br(4)$ is shown in figure \ref{fig:12-brack34}.  
%
%
\begin{figure}[htp] 
\centering 
\begin{tikzpicture}
[lab/.style={draw, circle, minimum size=5, inner sep=1}, 
n/.style={draw, circle, fill, minimum size=5, inner sep=1}, scale=.7]
\draw  (-3,0) node { $\cM_3~~ =  $};
\draw  (-1.5,0) node { $\displaystyle \frac{1}{4}$};
\draw (0,0) node[n] (n1) { } (0,-0.7) node[root] (rr) {};
\draw (-0.5,0.5) node[n] (n2) {};
\draw (0.5,0.5) node[lab] (v3) {$\scriptstyle 3$};
\draw (-1,1) node[lab] (v1) {$\scriptstyle 1$};
\draw (0,1) node[lab] (v2) {$\scriptstyle 2$};
\draw (n1) edge (rr) edge (n2) edge (v3) (n2) edge (v1) edge (v2);
\draw  (1.5,0) node { $+$};
\begin{scope}[shift={(4,0)}]
\draw  (-1.5,0) node { $\displaystyle \frac{1}{4}$};
\draw (0,0) node[n] (n1) { } (0,-0.7) node[root] (rr) {};
\draw (0.5,0.5) node[n] (n2) {};
\draw (-0.5,0.5) node[lab] (v3) {$\scriptstyle 3$};
\draw (0,1) node[lab] (v1) {$\scriptstyle 1$};
\draw (1,1) node[lab] (v2) {$\scriptstyle 2$};
\draw (n1) edge (rr) edge (n2) edge (v3) (n2) edge (v1) edge (v2);
\end{scope}
\draw  (5.5,0) node { $+$};
\begin{scope}[shift={(8,0)}]
\draw  (-1.5,0) node { $\displaystyle \frac{1}{4}$};
\draw (0,0) node[n] (n1) { } (0,-0.7) node[root] (rr) {};
\draw (-0.5,0.5) node[n] (n2) {};
\draw (0.5,0.5) node[lab] (v3) {$\scriptstyle 3$};
\draw (-1,1) node[lab] (v2) {$\scriptstyle 2$};
\draw (0,1) node[lab] (v1) {$\scriptstyle 1$};
\draw (n1) edge (rr) edge (n2) edge (v3) (n2) edge (v1) edge (v2);
\end{scope}
\draw  (9.5,0) node { $+$};
\begin{scope}[shift={(12,0)}]
\draw  (-1.5,0) node { $\displaystyle \frac{1}{4}$};
\draw (0,0) node[n] (n1) { } (0,-0.7) node[root] (rr) {};
\draw (0.5,0.5) node[n] (n2) {};
\draw (-0.5,0.5) node[lab] (v3) {$\scriptstyle 3$};
\draw (0,1) node[lab] (v2) {$\scriptstyle 2$};
\draw (1,1) node[lab] (v1) {$\scriptstyle 1$};
\draw (n1) edge (rr) edge (n2) edge (v3) (n2) edge (v1) edge (v2);
\end{scope}
\end{tikzpicture}
\caption{The vector $\cM_3 \in \Br(3)$} \label{fig:cM3}
\end{figure}
%
%
\begin{figure}[htp] 
\centering 
\begin{tikzpicture}
[lab/.style={draw, circle, minimum size=5, inner sep=1}, 
n/.style={draw, circle, fill, minimum size=5, inner sep=1}, scale=.7]
\draw  (-1.5,0) node { $\displaystyle \frac{1}{4}$};
\draw (0,0) node[n] (n1) { } (0,-0.7) node[root] (rr) {};
\draw (-0.5,0.5) node[n] (n2) {};
\draw (0.5,0.5) node[lab] (v3) {$\scriptstyle 3$};
\draw (1,1) node[lab] (v4) {$\scriptstyle 4$};
\draw (-1,1) node[lab] (v1) {$\scriptstyle 1$};
\draw (0,1) node[lab] (v2) {$\scriptstyle 2$};
\draw (n1) edge (rr) edge (n2) edge (v3) (n2) edge (v1) edge (v2) (v3) edge (v4);
\draw  (1.5,0) node { $-$};
\begin{scope}[shift={(4,0)}]
\draw  (-1.5,0) node { $\displaystyle \frac{1}{4}$};
\draw (0,0) node[n] (n1) { } (0,-0.7) node[root] (rr) {};
\draw (0.5,0.5) node[n] (n2) {};
\draw (-0.5,0.5) node[lab] (v3) {$\scriptstyle 3$};
\draw (-1,1) node[lab] (v4) {$\scriptstyle 4$};
\draw (0,1) node[lab] (v1) {$\scriptstyle 1$};
\draw (1,1) node[lab] (v2) {$\scriptstyle 2$};
\draw (n1) edge (rr) edge (n2) edge (v3) (n2) edge (v1) edge (v2) (v3) edge (v4);
\end{scope}
\draw  (5.5,0) node { $+$};
\begin{scope}[shift={(8,0)}]
\draw  (-1.5,0) node { $\displaystyle \frac{1}{4}$};
\draw (0,0) node[n] (n1) { } (0,-0.7) node[root] (rr) {};
\draw (-0.5,0.5) node[n] (n2) {};
\draw (0.5,0.5) node[lab] (v3) {$\scriptstyle 3$};
\draw (1,1) node[lab] (v4) {$\scriptstyle 4$};
\draw (-1,1) node[lab] (v2) {$\scriptstyle 2$};
\draw (0,1) node[lab] (v1) {$\scriptstyle 1$};
\draw (n1) edge (rr) edge (n2) edge (v3) (n2) edge (v1) edge (v2) (v3) edge (v4);
\end{scope}
\draw  (9.5,0) node { $-$};
\begin{scope}[shift={(12,0)}]
\draw  (-1.5,0) node { $\displaystyle \frac{1}{4}$};
\draw (0,0) node[n] (n1) { } (0,-0.7) node[root] (rr) {};
\draw (0.5,0.5) node[n] (n2) {};
\draw (-0.5,0.5) node[lab] (v3) {$\scriptstyle 3$};
\draw (-1,1) node[lab] (v4) {$\scriptstyle 4$};
\draw (0,1) node[lab] (v2) {$\scriptstyle 2$};
\draw (1,1) node[lab] (v1) {$\scriptstyle 1$};
\draw (n1) edge (rr) edge (n2) edge (v3) (n2) edge (v1) edge (v2) (v3) edge (v4);
\end{scope}
\draw  (15,0) node { $+ \quad (3 \leftrightarrow 4)$};
\end{tikzpicture}
\caption{The vector $\Psi(a_1 a_2 \{a_3, a_4\}) \in \Br(4)$} \label{fig:12-brack34}
\end{figure}

We claim that 
\begin{prop}
\label{prop:Psi-is-great!}
Equations \eqref{Psi-dfn} and \eqref{Psi-dfn-Lie} define a map of collections
of dg vector spaces 
$$
\Psi : \Ger \to \Br. 
$$
Furthermore, the induced map 
\begin{equation}
\label{H-Psi}
H^{\bul}(\Psi) : \Ger \to H^{\bul}(\Br) 
\end{equation}
is compatible with the operadic multiplications.
\end{prop}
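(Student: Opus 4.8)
The plan is to prove Proposition~\ref{prop:Psi-is-great!} in two stages, corresponding to its two assertions. First I must check that $\Psi$ is a well-defined map of dg collections, i.e.\ that it is $S_n$-equivariant and that it is a chain map ($\de \circ \Psi = \Psi \circ \de_{\Ger}$, where $\de_{\Ger} = 0$ since $\Ger$ carries the zero differential). Second I must show that the induced map $H^{\bul}(\Psi)$ intertwines the operadic compositions.

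For the first stage, equivariance is essentially built into the definition: formula~\eqref{Psi-dfn} is stated by transporting the value on a ``standard'' monomial by the permutation $\si$ attached to the ordered partition~\eqref{sp-partition}, so I would verify that this prescription is consistent, namely that two different ways of writing a given element of $\Ger(n)$ (via the symmetries of the $\La\Lie$ factors and of the outer commutative product) are sent to the same brace tree up to the correct Koszul sign. The key point is that the basis~\eqref{Ger-n-basis} is an honest basis, so it suffices to define $\Psi$ on basis elements and then extend; equivariance then follows because the $S_n$-action permutes basis monomials and the normalization conventions (biggest index last in each block, increasing block-maxima) pin down $\si$ uniquely. To show $\Psi$ is a chain map I would argue that the image of $\Psi$ lands in $\de$-cocycles. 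Indeed, $T_{\{a_1,a_2\}}$ is a $\de$-cocycle and $T_{a_1a_2} = \tfrac12(T_\cup + T_\cup^{\opp})$ is a $\de$-cocycle by the discussion preceding the proposition; since $\mj$ is an honest operad map into $\Br$ with $\La\Lie$ carrying zero differential, each $\mj(v)$ is a $\de$-cocycle, and $\cM_t$ is built from $T_{a_1a_2}$ by iterated $\circ_1$-insertions. Because $\de$ is an operadic derivation and all the building blocks are cocycles, every $\mu(\cM_t; \mj(\cdots),\dots,\mj(\cdots))$ is a $\de$-cocycle, whence $\de \Psi = 0 = \Psi \de_{\Ger}$.

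For the second stage I would pass to cohomology. The cohomology classes of $T_{a_1a_2}$ and $T_{\{a_1,a_2\}}$ satisfy, respectively, the commutativity/associativity relations (from the maps~\eqref{Com-to-HBr}) and the Jacobi relation (from~\eqref{mj}), and they together satisfy the Leibniz relation up to homotopy; this is exactly the content of Claim~\ref{cl:Ger-relations} recalled in the outline. Since $\Ger$ is the operad \emph{generated} by the symmetric product $a_1a_2$ and the antisymmetric bracket $\{a_1,a_2\}$ subject precisely to the associativity, commutativity, Jacobi, and Leibniz relations, the assignment sending these two generators to the cohomology classes $[T_{a_1a_2}]$ and $[T_{\{a_1,a_2\}}]$ extends to a well-defined operad morphism $\Ger \to H^{\bul}(\Br)$, namely the map~\eqref{Ger-to-HBr}. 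The remaining task is therefore to identify $H^{\bul}(\Psi)$ with this generators-and-relations map, i.e.\ to check that $\Psi$, applied to a basis monomial~\eqref{Ger-n-basis}, represents the same cohomology class as the corresponding iterated composition of generators in $H^{\bul}(\Br)$. This is exactly what formula~\eqref{Psi-dfn} encodes: the outer product structure is implemented by $\cM_t$ (an explicit chain-level representative of the $t$-fold commutative product, built by iterated $\circ_1$ from $T_{a_1a_2}$), and the inner bracketings are implemented by $\mj$ applied to $\La\Lie$-monomials.

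The main obstacle I anticipate is the second stage, and specifically the bookkeeping needed to see that the \emph{chain-level} definition~\eqref{Psi-dfn} descends to the operad morphism on cohomology. At the chain level $\Psi$ is not a strict operad map---the generators satisfy their relations only up to $\de$-exact terms---so compatibility with operadic composition can only hold after passing to $H^{\bul}(\Br)$. Concretely, for the composite $\gamma \circ_i \gamma'$ of two $\Ger$-monomials I must show that $\Psi(\gamma \circ_i \gamma')$ and $\mu(\Psi(\gamma); \dots, \Psi(\gamma'), \dots)$ differ by a $\de$-exact brace tree, and controlling these correction terms (together with their Koszul signs, governed by the total order on the edges of each planar tree recalled in Section~\ref{sec:notation}) is where the real work lies. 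My strategy here is to \emph{not} argue directly but rather to invoke the abstract presentation of $\Ger$: once I know $[T_{a_1a_2}]$ and $[T_{\{a_1,a_2\}}]$ satisfy the defining Gerstenhaber relations in $H^{\bul}(\Br)$ (Claim~\ref{cl:Ger-relations}), the universal property of the free operad modulo relations guarantees a unique compatible operad map, and a short induction on the number of generators in a $\Ger$-monomial matches it term-by-term with $H^{\bul}(\Psi)$, reducing all sign and homotopy subtleties to the already-established relations for the two generators.
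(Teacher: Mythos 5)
Your proposal's valid core is the same as the paper's own (very short) proof. For the first statement the paper argues exactly as you do: $T_{a_1 a_2}$ and $T_{\{a_1,a_2\}}$ are $\de$-cocycles, $\mj$ is a map of dg operads out of an operad with zero differential, and $\de$ is a derivation for the operadic compositions, so every vector \eqref{Psi-dfn} is a $\de$-cocycle and $\Psi$ commutes with the differentials. For the second statement the paper likewise reduces everything to Claim \ref{cl:Ger-relations}: the classes $[T_{a_1 a_2}]$ and $[T_{\{a_1,a_2\}}]$ satisfy the Gerstenhaber relations, hence define the operad map \eqref{Ger-to-HBr}, and formula \eqref{Psi-dfn} is designed precisely so that $H^{\bul}(\Psi)$ agrees with this map on every basis monomial \eqref{Ger-n-basis}, since passing to cohomology turns the chain-level compositions defining $\cM_t$ and $\mj$ into compositions of the two generating classes.

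The step that would fail is your claimed verification of $S_n$-equivariance at the chain level: $\Psi$ is \emph{not} equivariant as a map into the complex $\Br(n)$, and equivariance holds only after passing to cohomology. Concretely, take $n=3$: the basis monomial $a_1a_2a_3$ is fixed by the transposition $(2\,3)$ in $\Ger(3)$, while $\Psi(a_1a_2a_3)=\cM_3=T_{a_1 a_2}\circ_1 T_{a_1 a_2}$ (each $\mj(a_i)$ is the operadic unit) is not: in all four trees of figure \ref{fig:cM3} the labels $1$ and $2$ are children of the upper neutral vertex, so exchanging the labels $2$ and $3$ produces brace trees that do not occur in $\cM_3$ at all. (The difference is $\de$-exact --- it reflects the fact that $T_{a_1 a_2}$ is associative only up to homotopy, cf.\ \eqref{assoc-homot} --- but it is nonzero on the nose.) Your justification, that the $S_n$-action permutes basis monomials and the normalization pins down $\si$, only shows that equivariance is a condition one could check on basis elements; it does not make the identity $\Psi(m\cdot\tau)=\Psi(m)\cdot\tau$ true, and the example shows it is false. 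So the first assertion of Proposition \ref{prop:Psi-is-great!} must be read the way the paper's proof reads it: each $\Psi:\Ger(n)\to\Br(n)$ is a degree-zero map of cochain complexes, with no equivariance asserted at the chain level, and compatibility with the symmetric group actions is, like compatibility with the compositions, a statement about $H^{\bul}(\Psi)$ only --- where it follows from your own second-stage identification of $H^{\bul}(\Psi)$ with \eqref{Ger-to-HBr}. Deleting the chain-level equivariance claim (and, a minor point, noting that in this paper's shifted conventions $\{a_1,a_2\}$ and $T_{\{a_1,a_2\}}$ are $S_2$-\emph{symmetric} rather than antisymmetric) repairs your write-up; the rest stands.
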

\begin{proof}
The first statement follows from the fact that the vectors $T_{a_1 a_2}, ~ T_{\{a_1, a_2\}} \in \Br(2)$ 
are $\de$-cocycles. The second statement follows from Claim \ref{cl:Ger-relations} proved in 
Appendix \ref{app:Ger-relation}.  
\end{proof}

\subsection{The refinement of Theorem \ref{thm:main}}
\label{sec:main-st-t}

We will prove the following refined version of Theorem \ref{thm:main}:
%
%
\begin{thm}
\label{thm:H-Br}
The map of dg collections $\Psi$ defined above
induces an isomorphism of graded operads 
\begin{equation}
\label{H-Br-is-Ger}
H^{\bul}(\Br) \cong \Ger.
\end{equation}
\end{thm}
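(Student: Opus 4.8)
The plan is to prove the sharper statement that the operad map $H^{\bul}(\Psi) : \Ger \to H^{\bul}(\Br)$ supplied by Proposition~\ref{prop:Psi-is-great!} is an isomorphism in every arity, and to argue by induction on $n$. The base case $n=1$ is immediate, so for $n \ge 2$ I would assume the result in all lower arities and exploit the decomposition $\Br(n) = V_\circ(n) \oplus V_\bullet(n)$ recorded in Section~\ref{sec:outline}, according to whether the lowest non-root vertex of a brace tree is labeled or neutral. The only component of the total differential that fails to preserve this splitting is $\de_1 : V_\circ(n) \to V_\bullet(n)$; hence $(V_\circ(n),\de_0)$ and $(V_\bullet(n),\de_0)$ are genuine subcomplexes, and a mapping-cone argument yields
$$
H^{\bul}(\Br(n)) \cong \ker H^{\bul}(\de_1) \ \oplus\ \coker H^{\bul}(\de_1),
$$
with $H^{\bul}(\de_1)$ the induced map \eqref{main-map}. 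This reduces the theorem to two tasks: computing the $\de_0$-cohomologies of the two summands, and pinning down the kernel and cokernel of $H^{\bul}(\de_1)$.

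For the first task I would treat $V_\circ(n)$ by a direct analysis of $\de_0$ (Section~\ref{sec:H-V-circ}), identifying its cohomology with the suspended regular representation, $H^{\bul}(V_\circ(n),\de_0) \cong \bs^{n-1}\bbK[S_n]$, with explicit representatives $T^n_\lambda$ (figure~\ref{fig:T-la}). The cohomology of $V_\bullet(n)$ is more delicate: I would filter $V_\bullet(n)$ by the number of children of the lowest neutral vertex and run the associated spectral sequence, so as to reach \eqref{isom-HV-bullet},
$$
H^{\bul}(V_\bullet(n),\de_0) \cong \Com \odot \La\Lie(n)\big/\La\Lie(n) ~\oplus~ \bs\big(\La\Com \odot \La\Lie(n)\big/\La\Lie(n)\big).
$$

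With both cohomologies in hand, the core of the proof is the analysis of $H^{\bul}(\de_1)$. Here I would dualize, as in Section~\ref{sec:Br-dual}, so that $\de_1$ becomes the more transparent edge-contraction operation of Section~\ref{sec:H-de-1}, and show that the dual map is injective exactly on the shifted summand $\bs(\La\Com \odot \La\Lie(n)/\La\Lie(n))$ while annihilating nothing in the $\La\Lie(n)$-directions. Dimension counting alone is consistent ($\dim\Ger(n)=n!$, and the two nontrivial summands of \eqref{isom-HV-bullet} each have dimension $n!-(n-1)!$), but the point is to match $S_n$-representations and degrees, yielding $\ker H^{\bul}(\de_1) \cong \La\Lie(n)$ and $\coker H^{\bul}(\de_1) \cong \Com \odot \La\Lie(n)/\La\Lie(n)$, whence
$$
H^{\bul}(\Br(n)) \cong \La\Lie(n) \ \oplus\ \Com \odot \La\Lie(n)\big/\La\Lie(n) \cong \Ger(n),
$$
the last step being the plethystic decomposition of $\Ger$ encoded by the basis \eqref{Ger-n-basis}. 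Finally I would verify that this abstract isomorphism is induced by $\Psi$: the pure Lie monomials ($t=1$) map under $\mj$ into $V_\circ(n)$ and represent exactly the kernel classes, while the monomials with $t \ge 2$ map via the vectors $\cM_t$ of \eqref{cM-t} into $V_\bullet(n)$ and represent exactly the cokernel classes. Since $H^{\bul}(\Psi)$ is an operad map, the arity-$n$ bijection then upgrades to the operad isomorphism \eqref{H-Br-is-Ger}.

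The main obstacle I expect lies entirely in the last step, and is twofold. First, the spectral sequence for $V_\bullet(n)$ has no formal reason to degenerate, so the identity $E_\infty = E_2$ (Lemma~\ref{lem:specseqabutment}) has to be proved by hand; this is the one place where a genuinely combinatorial argument on brace trees, rather than bookkeeping, is unavoidable. Second, even granting both cohomology computations, controlling the rank of $H^{\bul}(\de_1)$ and matching it \emph{summand by summand} against \eqref{isom-HV-bullet}—rather than merely by dimension—is what forces the dualization in Section~\ref{sec:H-de-1}, and keeping the signs and the interaction of $\de_1$ with the two pieces of \eqref{isom-HV-bullet} exactly right is the crux of the whole argument.
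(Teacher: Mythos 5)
Your proposal is correct and follows essentially the same route as the paper: the same induction on arity with the splitting $\Br(n)=V_\circ(n)\oplus V_\bullet(n)$ and the ker/coker decomposition for $H^{\bul}(\de_1)$, the same computations of $H^{\bul}(V_\circ(n),\de_0)$ and $H^{\bul}(V_\bullet(n),\de_0)$ (including the filtration by the number of children of the lowest neutral vertex and the degeneration $E_2=E_\infty$), and the same dualization to pin down the rank of $H^{\bul}(\de_1)$ and identify representatives with $\Psi$-images. The two obstacles you single out---proving Lemma \ref{lem:specseqabutment} by a hands-on combinatorial argument and matching kernel and cokernel of $H^{\bul}(\de_1)$ summand by summand in the dual complex---are precisely the technical core of the paper's own proof.
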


We will prove that $\Psi$ induces an isomorphism $H^{\bul}(\Br(n)) = \Ger(n)$ by induction on $n$. 

For $n=1$ there is nothing to show. 
So suppose we know that $H^{\bul}(\Br(j)) = \Ger(j)$ for $j=1,2\dots,n-1$ and let us tackle the statement for $j=n$. As outlined in the introduction, we split
\[
\begin{tikzpicture}
\matrix (m) [matrix of math nodes]
{
\Br(n) &= & V_\circ(n) &\oplus & V_\bullet(n), \\ 
};
\draw (m-1-3) edge[bend left, ->] node[above] {$\delta_1$} (m-1-5);
\draw (m-1-3) edge[loop above] node[above] {$\delta_0$} (m-1-3);
\draw (m-1-5) edge[loop above] node[above] {$\delta_0$} (m-1-5);
\end{tikzpicture}
\]
where $V_\bullet(n)$ is the subspace of $\Br(n)$ spanned by brace trees whose lowest non-root vertex
is neutral, while $V_\circ(n)$ is the subspace of $\Br(n)$ spanned by brace trees whose lowest non-root vertex is labeled. 
Again as mentioned before, we then find that 
\begin{equation}
\label{H-Br-ker-coker}
H^{\bul}(\Br(n)) = \big( \ker H^{\bul}(\delta_1) \big) \oplus \big( \coker H^{\bul}(\delta_1) \big),
\end{equation}
where 
\begin{equation}
\label{H-de-1}
H^{\bul}(\delta_1) ~:~
H^{\bul}(V_\circ(n), \delta_0) \longrightarrow H^{\bul}(V_\bullet(n), \delta_0)\,.
\end{equation}
is the induced map on $\delta_0$-cohomologies.

\subsection{Computing $H^{\bul}(V_\circ(n), \delta_0)$}
\label{sec:H-V-circ}

Following remarks in Subsection \ref{sec:Br-dual}, we begin by computing $H^{\bul}(V^*_\circ(n), \delta^*_0)$
for the dual of the complex $(V_\circ(n), \delta_0)$:
\begin{claim}
\label{cl:2}
We claim that 
\begin{equation}
\label{cl2-eq}
H^{\bul}(V_\circ^*(n), \delta_0^*)\cong \bs^{n-1}\bbK^{n!}\cong \bs^{n-1}\bbK[S_n]
\end{equation}
as $S_n$-modules. Moreover, the class corresponding to 
a permutation $\la \in S_n$ is represented by the brace tree $T^n_{\la}$ shown in figure \ref{fig:T-la}.
\end{claim}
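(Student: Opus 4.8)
The plan is to work throughout with the dual complex $(V_\circ^*(n), \delta_0^*)$ of Subsection \ref{sec:Br-dual}, on which $\delta_0^*$ contracts the non-root edges adjacent to a neutral vertex. Each such contraction lowers both the number of neutral vertices and the number of non-root edges by one, hence raises the cohomological degree by one; consequently the complex is concentrated in degrees $\le n-1$, and the top degree $n-1$ is spanned precisely by the brace trees having no neutral vertex. On those $\delta_0^*$ vanishes, so every neutral-free tree — in particular each $T^n_\la$ of Figure \ref{fig:T-la} — is automatically a cocycle. Since the target $\bs^{n-1}\bbK[S_n]$ is concentrated in degree $n-1$, the claim reduces to two assertions: that the complex is acyclic in degrees $<n-1$, and that the classes $[T^n_\la]$, $\la\in S_n$, form a basis of $H^{n-1}$.

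The core of the argument is an explicit acyclic matching on the brace trees spanning $V_\circ(n)$. Call a non-root vertex a \emph{defect} if it is neutral or if it is labeled with at least two children; a brace tree has no defect exactly when it is a linear chain of labeled vertices, i.e. one of the $n!$ trees $T^n_\la$. For a tree $T$ possessing a defect, let $w$ be the defect closest to the root. If $w$ is labeled, with children $c_1,\dots,c_m$ (so $m\ge 2$), I pair $T$ with the tree obtained by inserting one neutral vertex $v$ as the unique child of $w$ and making $c_1,\dots,c_m$ the children of $v$. If $w$ is neutral then, because we are in $V_\circ(n)$ and $w$ is the lowest defect, its parent $p$ is a labeled vertex whose only child is $w$; in this case I pair $T$ with the tree obtained by contracting the edge joining $w$ and $p$. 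The two moves change the number of neutral vertices by one and are mutually inverse, so they constitute a perfect matching of all brace trees that are not linear chains; the unmatched (critical) trees are exactly the $T^n_\la$, all of which lie in the top degree $n-1$.

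Granting acyclicity, discrete Morse theory identifies $H^{\bul}(V_\circ^*(n),\delta_0^*)$ with the Morse complex supported on the critical cells. As all critical cells sit in the single degree $n-1$, the Morse differential is forced to vanish, so the cohomology is concentrated in degree $n-1$ with basis $\{\,[T^n_\la]\,\}_{\la\in S_n}$, giving $H^{\bul}(V_\circ^*(n),\delta_0^*)\cong \bs^{n-1}\bbK^{n!}$. For the $S_n$-module structure I would simply observe that relabeling permutes the linear chains simply transitively, taking $T^n_\la$ to $T^n_{\si\la}$ up to sign; hence, after the degree shift, $H^{n-1}$ is the regular representation, yielding $\bs^{n-1}\bbK[S_n]$.

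The step I expect to be the main obstacle is the verification that the matching is acyclic and sign-compatible, which also supplies the acyclicity in degrees $<n-1$. The matching edges carry coefficient $\pm 1$, since an elementary edge contraction always does, so they are invertible over $\bbK$. For acyclicity I would use two monovariants on the associated directed graph: every non-matched term of $\delta_0^*$ is a genuine contraction and strictly decreases the number of neutral vertices, while the height (distance from the root) of the lowest defect never decreases under any move and strictly increases along a reversed matching edge. A directed cycle would have to preserve both quantities, which is impossible unless it uses no reversed matching edge and no forward contraction, i.e. is trivial. Making this watertight requires the careful tracking of the planar order and of the edge signs fixed in Section \ref{sec:Br-desc}.
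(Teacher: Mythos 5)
Your proposal is correct, but it takes a genuinely different route from the paper's. The paper proves Claim \ref{cl:2} by induction on $n$: it splits $V_\circ^*(n) = W_1 \oplus W_{\geq 2}$ according to whether the lowest (labeled) vertex has exactly one child or at least two, observes that the component $\de_1'$ of the differential from $W_1$ to $W_{\geq 2}$ is surjective with kernel spanned by trees whose lowest vertex has a labeled child, identifies the kernel complex $(\ker \de_1', \de_0')$ with (a relabeled, degree-shifted copy of) $V_\circ^*(n-1)$, and invokes the induction hypothesis. You instead run a global, non-inductive argument via algebraic discrete Morse theory: your matching (expand a labeled lowest defect by inserting a neutral vertex below it, respectively contract a neutral lowest defect into its univalent labeled parent) is a genuine perfect matching on the non-chain trees, each matched pair is related by a single edge contraction so the matched coefficients are $\pm 1$, and the critical cells are exactly the $n!$ chains $T^n_\la$ of figure \ref{fig:T-la}, all sitting in the top degree $n-1$, so the Morse differential vanishes for degree reasons. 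The paper's induction buys brevity and avoids any acyclicity verification; your approach buys an explicit, one-pass computation that produces the representatives $T^n_\la$ directly and makes the vanishing below top degree completely transparent.

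The one step you flag without proving --- acyclicity of the matching --- does hold, and the observation that closes it is one you already use implicitly for well-definedness: since every vertex strictly below the lowest defect is a non-defect, it is labeled with exactly one child, so the vertices below the lowest defect form a chain and the lowest defect is the \emph{unique} vertex at its height. Granting this, verify your height monovariant for an unmatched contraction of an edge $(u,v)$ with $u$ the parent: new defects can only appear at the merged vertex, which sits at the height of $u$. If $u$ is neutral, it is itself a defect, so its height is at least that of the lowest defect and nothing is lowered. If $u$ is labeled and $v$ is neutral, the only dangerous case is $u$ lying one level below the lowest defect; but then $u$ is on the chain, $v$ is its unique child, hence $v$ \emph{is} the lowest defect and the contraction of $(u,v)$ is precisely the matched edge, which is excluded. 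Thus unmatched moves never lower the height of the lowest defect, reversed matched edges raise it strictly, and since any directed cycle must use at least one reversed edge (forward edges strictly decrease the number of neutral vertices), no cycle exists. Note also that signs play no role in acyclicity; they enter only through invertibility of the matched coefficients, which is automatic because no contraction of the expanded tree other than the matched one can reproduce its partner.
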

\begin{proof}
We proceed by induction on $n$. For $n=1$ the statement is clear. 
Otherwise split:
\[
\begin{tikzpicture}
\matrix (m) [matrix of math nodes]
{
V_\circ^* &= & W_1 &\oplus & W_{\geq 2} \\ 
};
\draw (m-1-3) edge[bend left, ->] node[above] {$\delta_1'$} (m-1-5);
\draw (m-1-3) edge[loop above] node[above] {$\delta_0'$} (m-1-3);
\draw (m-1-5) edge[loop above] node[above] {$\delta_0'$} (m-1-5);
\end{tikzpicture}
\]
Here $W_1$ is spanned by brace trees in which the lowest non-root vertex has exactly one child and $W_{\geq 2}$ 
is spanned by brace trees in which the lowest non-root vertex has at least two children. 
It is easy to see that $\delta_1'$ is surjective and that its kernel is spanned 
by brace trees whose lowest non-root vertex has a labeled vertex as a child. 
The complex $(\ker \delta_1', \delta_0')$ is isomorphic 
to $(V_\circ^*(n-1), \delta^*_0)$\,. 

Thus the induction hypothesis implies that $H^{\bul}(V_\circ^*(n), \delta_0^*)  \cong  \bs^{n-1}\bbK[S_n]$
as graded vector spaces. The compatibility of the resulting isomorphism with the $S_n$-action is obvious. 
\end{proof}

\begin{remark}
\label{rem:TTprime}
Recall that every brace tree $T \in \Br(n)^*$ without neutral vertices is automatically $\de^*$-closed 
and hence $\delta^*_0$-closed. Therefore, by Claim \ref{cl:2}, for every brace tree $T \in \Br(n)^*$
without neutral vertices, there exists a vector $T' \in V_\circ^*(n)$, 
such that $T-\delta_0^*T'$ is a linear combination of string-like brace trees, i.e. brace trees of the form 
$T^n_{\la}$ (see figure \ref{fig:T-la}). 
\end{remark}

\subsection{Computing $H^{\bul}(V_\bullet(n), \delta_0)$}
\label{sec:H-V-bul}

To compute $H^{\bul}(V_\bullet(n), \delta_0)$,
we filter the cochain complex $(V_\bullet(n), \de_0) $ by the number of children of the lowest non-root vertex:
\begin{equation}
\label{filtr-V-bul}
\bfzero = \mathcal{F}^1 V_\bullet(n) \subset
\mathcal{F}^2V_\bullet(n) \subset \mathcal{F}^3 V_\bullet(n) \subset \cdots \subset 
\mathcal{F}^n V_\bullet(n) = V_\bullet(n).
\end{equation}
Here $\mathcal{F}^p V_\bullet(n)$ is spanned by brace trees whose lowest non-root vertex has $\le p$ children. 
Then we consider the spectral sequence associated to this filtration.

The first differential, say $d_0$, splits vertices except for the lowest non-root vertex. Hence, 
\begin{equation}
\label{Gr-V-bul}
\Gr V_\bullet(n) \cong  \big( \bs \La \coAs_{\c} \odot \Br \big)(n)\,,
\end{equation}
where $\bs \La \coAs_{\c}$ is the collection with 
\begin{equation}
\label{bsi-LaAs-c}
\bs \La\coAs_{\c}(q) = \begin{cases}
  \bs^{2-q} \bbK[S_q] \otimes \sgn_q  \qquad {\rm if} ~~ q \ge 2 \,, \\
  \bfzero \qquad {\rm otherwise}\,.
\end{cases}
\end{equation}
Therefore, by inductive hypothesis, we conclude that
\begin{equation}
\label{E1-V-bul}
E_1 V_\bullet(n)  := H^{\bul}(\Gr V_\bullet(n)  , d_0) \cong  \big( \bs \La \coAs_{\c} \odot \Ger \big)(n).
\end{equation}
Moreover, the cohomology class in $H^{\bul}(\Gr^q V_\bullet(n)  , d_0)$ corresponding to the vector 
$$
\bs^{2-q} \ \id_q \otimes (v_1\otimes \dots \otimes v_q) \in 
\bs \La\coAs_{\c}(q) \otimes \big( \Ger(n_1) \otimes \dots \otimes \Ger(n_q) \big) 
$$
is represented by the $d_0$-cocycle
\begin{equation}
\label{d-0-cocycle}
\mu \big( T^{\bul}_q ; \Psi(v_1), \Psi(v_2), \dots, \Psi(v_q) \big) \in \Br(n),
\end{equation}
where $\mu$ is the operadic multiplication on $\Br$, $n = n_1 + \dots + n_q$, 
$T^{\bul}_q$ is the brace tree shown in figure \ref{fig:T-bul-q}, and 
$\Psi$ is the map of collections \eqref{Psi}. 
%
%
\begin{figure}[htp] 
\centering 
\begin{minipage}[t]{0.45\linewidth}
\centering
\begin{tikzpicture}[scale=0.5]
\tikzstyle{lab}=[circle, draw, minimum size=5, inner sep=1]
\tikzstyle{n}=[circle, draw, fill, minimum size=6, inner sep=0]
\tikzstyle{root}=[circle, draw, fill, minimum size=0, inner sep=1]
\node[root] (r) at (0, 0) {};
\node[n] (n) at (0, 1) {};
\node [lab] (v1) at (-2,2.8) {$\scriptstyle 1$};
\node [lab] (v2) at (-1,2.8) {$\scriptstyle 2$};
\draw (0.5,2.8) node[anchor=center] {{$\dots$}};
\node [lab] (vq) at (2,2.8) {$\scriptstyle q$};
\draw (r) edge (n);
\draw (n) edge (v1) edge (v2) edge (vq);
\end{tikzpicture}
\caption{The brace tree $T^{\bul}_{q}$} \label{fig:T-bul-q}
\end{minipage}
~~~
\begin{minipage}[t]{0.45\linewidth}
\centering
\begin{tikzpicture}[scale=0.5]
\tikzstyle{lab}=[circle, draw, minimum size=5, inner sep=1]
\tikzstyle{n}=[circle, draw, fill, minimum size=6, inner sep=0]
\tikzstyle{root}=[circle, draw, fill, minimum size=0, inner sep=1]
\node[root] (r) at (0, 0) {};
\node[n] (n) at (0, 1) {};
\node [lab] (v1) at (-3,2.8) {$\scriptstyle 1$};
\draw (-2,2.8) node[anchor=center] {{$\dots$}};
\node [lab] (v1i) at (-0.5,2.8) {$\scriptstyle i-1$};
\node [lab] (vi) at (1,2.8) {$\scriptstyle i$};
\node [lab] (vi1) at (1.2,4.3) {$\scriptstyle i+1$};
\node [lab] (vi2) at (2.5,2.8) {$\scriptstyle i+2$};
\draw (4,2.8) node[anchor=center] {{$\dots$}};
\node [lab] (vq) at (5,2.8) {$\scriptstyle q$};
\draw (r) edge (n);
\draw (n) edge (v1) edge (v1i) edge (vi) edge (vi2) edge (vq) (vi) edge (vi1);
\end{tikzpicture}
\caption{The brace tree $T^{\bul}_{q,i}$} \label{fig:T-bul-q-i}
\end{minipage}
\end{figure} 

Before proceeding to further pages of this spectral sequence, we need to fix some 
conventions\footnote{We use the cohomological version of the notational conventions from \cite[Construction 5.4.6]{Weibel}.}. 
First, we denote by $\cA^q_r$  ($r \ge 0$) the following subspaces of $\cF^q V_\bullet(n)$: 
\begin{equation}
\label{cA-q-r}
\cA^q_r : = \big\{ v \in \cF^q V_\bullet(n) ~\big|~ \de_0(v) \in \cF^{q-r} V_\bullet(n) \big\}.
\end{equation}
For example, $\cA^q_0 = \cF^q V_\bullet(n)$ and vectors in $\cA^q_1$ represent 
cocycles in $\Gr^q  V_\bullet(n)$. By the construction of the spectral sequence  \cite[Construction 5.4.6]{Weibel}, 
the components of the $r$-th page are the quotients 
\begin{equation}
\label{E-r-q}
E^q_r : = \frac{\cA^q_r }{\de_0( \cA^{q+r-1}_{r-1}) + \cA^{q-1}_{r-1}}\,.
\end{equation}

The results of the computation of $E_2 V_\bullet(n) : = H^{\bul} \big( E_1 V_\bullet(n), d_1 \big)$
are listed in the following claim:
\begin{claim}
\label{cl:E2-V-bullet}
For  $E_2 V_\bullet(n) : = H^{\bul} \big( E_1 V_\bullet(n), d_1 \big)$, we have
\begin{equation}
\label{E2-V-bul}
E_2 V_\bullet(n) \cong   \Com \odot \La \Lie (n) \big/ \La \Lie (n)  ~\oplus ~
\bs \big( \La\Com \odot \La \Lie (n) \big/ \La \Lie (n) \big)\,.
\end{equation}
More precisely, 
\begin{equation}
\label{E2-with-q}
E^q_2 V_\bullet(n) \cong
\begin{cases}
\displaystyle  \Com \odot \La \Lie (n) \big/ \La \Lie (n) ~\oplus~ 
\bigoplus_{n_1 + n_2 = n}  \Ind^{S_n}_{S_{n_1} \times S_{n_2}}\,
\Big( \sgn_{2} \otimes_{S_2} \big( \La \Lie (n_1) \otimes \La \Lie (n_2) \big) \Big) \quad 
{\rm if} ~~ q = 2,  \\[0.5cm]
\displaystyle  \bigoplus_{n_1 + \dots + n_q = n} 
\Ind^{S_n}_{S_{n_1} \times \dots \times S_{n_q}}
\Big( \bs^{2-q}\, \sgn_{q} \otimes_{S_q} \big(\La \Lie (n_1) \otimes \dots 
\otimes \La \Lie (n_q)\big) \Big) \quad {\rm if}~~ 3 \le q \le n, \\[0.5cm]
\qquad  \bfzero  \qquad {\rm otherwise}.
\end{cases}
\end{equation}
The classes corresponding to vectors in $\Com \odot \La \Lie (n) \big/ \La \Lie (n)$ are represented in 
$\cA^2_2$ by cocycles (in $(\Br(n), \de)$) which are obtained by applying $\Psi$ \eqref{Psi} to linear combinations 
of monomials \eqref{Ger-n-basis} in $\Ger(n)$ with $t \ge 2$.  

If $q \ge 3$, the class corresponding to the vector 
$$
\bs^{2-q}\, 1_q \otimes (v_1 \otimes \dots \otimes v_q) \in  \bs \La\Com \odot \La \Lie (n)
$$
is represented in $\cA^q_2$ by the cochain 
\begin{equation}
\label{fork-q-antisymm}
u_q : = \frac{1}{q!} \sum_{\si\in S_q} (-1)^{|\si|} \mu \big( \si ( T^{\bul}_q ) 
\otimes \mj(v_{1}) \otimes \mj(v_2) \otimes \dots \otimes \mj (v_q) \big)
\end{equation}
$$
~ + ~   
 \frac{1}{q!} \sum_{i=1}^{q-1}\sum_{\substack{\si\in S_q \\ \si(i) < \si(i+1)}} (-1)^{|\si|} 
\mu \big( \si ( T^{\bul}_{q,i} ) \otimes \mj(v_{1}) \otimes \mj(v_2) \otimes \dots \otimes \mj (v_q) \big),
$$
where $\mu$ is the operadic composition
$$
\mu : \Br(q) \otimes \Br(m_1)\otimes \dots \otimes \Br(m_q) \to \Br(m_1 + \dots + m_q),
$$ 
$\mj$ is the operad map in \eqref{mj}, and $T^{\bul}_q$ (resp. $T^{\bul}_{q,i}$) is the brace tree shown in figure \ref{fig:T-bul-q}
(resp. figure \ref{fig:T-bul-q-i}). Finally, the class corresponding to the vector 
$$
1_2 \otimes (v_1\otimes v_2) ~\in~  \sgn_{2} \otimes_{S_2} \big( \La \Lie (n_1) \otimes \La \Lie (n_2) \big)
$$
is represented in $\cA^2_2$ by the cochain 
\begin{equation}
\label{fork-q-2}
\frac{1}{2}\, \mu\big( (T_{\cup} - T_{\cup}^{\opp}) \otimes \mj(v_{1}) \otimes \mj(v_{2}) \big),
\end{equation}
where $T_{\cup}$ and $T_{\cup}^{\opp}$ are shown in figure \ref{fig:examples}.
\end{claim}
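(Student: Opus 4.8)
The plan is to compute the second page $E_2 V_\bullet(n) = H^{\bul}(E_1 V_\bullet(n), d_1)$ by first making $d_1$ explicit and then recognizing the resulting complex as a truncated, sign-twisted bar complex of a free commutative algebra. Geometrically, the only part of $\de_0$ that lowers the number of children of the lowest (neutral) vertex is the operation $\de_v$ applied to this very vertex, and the component that drops the count by exactly one groups precisely two \emph{adjacent} children under a fresh neutral vertex. Hence, on $E_1 V_\bullet(n) \cong (\bs\La\coAs_{\c}\odot\Ger)(n)$, the differential $d_1$ is the alternating sum, over adjacent pairs of children, of the map that fuses the two corresponding $\Ger$-labelled subtrees by placing them under a common new neutral vertex, i.e.\ by the planar cup product $T_{\cup}$ of figure \ref{fig:examples}. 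Since the cohomology class of $T_{\cup}$ is the commutative product of $\Ger$, the differential $d_1$ sees the decomposition $\Ger = \Com\odot\La\Lie$ only through its outer $\Com$-factor.

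This decouples the two factors. Treating the $\La\Lie$-monomials as inert ``letters'' (they are never split or altered by $d_1$, only multiplied commutatively), the complex $(E_1 V_\bullet(n), d_1)$ is the plethysm, along $\La\Lie$, of the universal complex $(\bs\La\coAs_{\c}\odot\Com, d_1)$, in which $d_1$ merges adjacent commutative blocks. For a fixed collection of $t$ letters partitioning $\{1,\dots,n\}$, this universal complex has, in arity-block $q$ (with $2\le q\le t$), a term spanned by ordered set partitions of the $t$ letters into $q$ nonempty blocks, carrying degree $\bs^{2-q}$ and sign twist $\sgn_q$, with $d_1$ merging adjacent blocks.

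The technical heart is the computation of this universal complex, which is exactly the reduced bar complex of the free graded-commutative algebra $\Com(V)$ on the span $V$ of the letters, truncated by discarding tensor-length $1$. By Koszulness of $\Com$, the untruncated reduced bar complex computes $\mathrm{Tor}^{\Com(V)}_{\bul}(\bbK,\bbK)$, whose multilinear part in the $t$ letters is one-dimensional, concentrated in top degree $q=t$, and carries the sign representation of $S_t$ in the $\La$-twisted normalization. Removing tensor-length $1$ leaves this top class untouched and reintroduces, at the bottom $q=2$, the class of the discarded length-$1$ term, namely the single commutative word in all $t$ letters, i.e.\ the trivial representation of $S_t$. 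Thus, for each $t\ge 2$, the cohomology is the trivial $S_t$-representation in degree $q=2$ and the sign representation $\bs\La\Com(t)$ in degree $q=t$, with no intermediate cohomology. Reassembling $S_n$-equivariantly over all letter-partitions (the block sizes producing the inductions $\Ind^{S_n}_{S_{n_1}\times\dots\times S_{n_q}}$) and plethysing back with $\La\Lie$ yields exactly \eqref{E2-V-bul} and its refined form \eqref{E2-with-q}; in particular the commutative classes for every $t\ge 2$ all land at $q=2$, which is why $E^2_2 V_\bullet(n)$ carries two summands while $E^q_2 V_\bullet(n)$ for $q\ge 3$ carries only the $\La\Com$-part with $t=q$.

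Finally I would pin down the representatives and verify they lie in $\cA^q_2$ rather than merely defining classes on $E_1$. The trivial ($q=2$) classes are realized by $\Psi$ applied to the basis monomials \eqref{Ger-n-basis} with $t\ge 2$: since $\cM_t$ of \eqref{cM-t} is left-combed with lowest neutral vertex of valence three, each such $\Psi$-image genuinely sits in $\cA^2_2$ and is a $\de$-cocycle. The sign classes are realized by the antisymmetrized forks, namely $u_q$ of \eqref{fork-q-antisymm} for $q\ge 3$ and the cochain \eqref{fork-q-2} for $q=2$. I expect the main obstacle to be precisely the verification that these antisymmetrized forks lie in $\cA^q_2$: the bare antisymmetrized fork $\sum_{\si}(-1)^{|\si|}\si(T^{\bul}_q)$ is a cocycle only modulo lower filtration, and the correction terms built from $T^{\bul}_{q,i}$ (figure \ref{fig:T-bul-q-i}) are exactly the lift needed so that $\de_0$ of the total cochain drops two filtration steps. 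Carrying the planar ``anti-commuting edge'' signs consistently through $d_1$, the antisymmetrization over $S_q$, and these lifts is where the bulk of the careful bookkeeping lies; the remaining convergence statement $E_{\infty} = E_2$ is handled separately in Lemma \ref{lem:specseqabutment}.
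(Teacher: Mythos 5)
Your proposal is correct and follows essentially the same route as the paper's proof: you identify $d_1$ with the (length-one-truncated) bar differential of the free commutative algebra $\Com(\La\Lie_n)$, invoke the classical computation of its bar homology (your appeal to Koszulness of $\Com$ is the paper's citation of Loday's Hochschild homology of symmetric algebras, in the graded setting), analyze the effect of discarding tensor length one in the same way, and exhibit the identical representatives, including the $T^{\bul}_{q,i}$-corrected forks $u_q$. The only differences are packaging --- you organize the computation as a plethysm with $\La\Lie$-``letters'' where the paper extracts the multilinear part of the truncated bar complex of $\Ger_n$ --- and, like the paper, you leave the verification that $\de_0 (u_q) \in \cF^{q-2} V_\bullet(n)$ as a direct sign computation.
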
 
\begin{remark}
\label{rem:not-a-cocycle}
Note that the vector \eqref{fork-q-antisymm} is not closed in $(V_\bullet(n), \de_0)$. It is merely 
a representative of an element in $E^q_2$, i.e. a vector $v \in \cF^q V_\bullet(n) $ such that 
$\de_0 (v) \in \cF^{q-2} V_\bullet(n)$. 
\end{remark}

\begin{proof}
The differential $d_1$ on $E_1 V_\bullet(n)$
splits the lowest non-root vertex producing a neutral child node with two children.
To describe this cochain complex, we consider the free
Gerstenhaber algebra $\Ger_n$ in $n$ auxiliary variables $a_1, a_2, \dots, a_n$ of degree 
zero. Forgetting the bracket $\{~,~\}$ on $\Ger_n$ we can view it merely as the 
free commutative algebra (without unit)
$$
\Ger_n = \Com (\La\Lie_n)
$$
generated by the free $\La\Lie$-algebra $\La\Lie_n$ in the auxiliary variables $a_1, a_2, \dots, a_n$ 

Next, we introduce the cofree coassociative coalgebra 
\begin{equation}
\label{coAs-bsi-Ger-n}
\coAs(\bsi \Ger_n) = \bigoplus_{q \ge 1} \big( \bsi \Ger_n \big)^{\otimes\, q}
\end{equation}
and equip it with the coderivation  $\md$ defined by the 
equation\footnote{Note that, since the coalgebra \eqref{coAs-bsi-Ger-n} is 
cofree, any coderivation $\md$ is uniquely determined by its composition
with the projection \eqref{p-Ger-n}.}
\begin{equation}
\label{coder-md}
p \circ \md (\bsi v_1 \otimes \dots \otimes \bsi v_q) = 
\begin{cases}
 (-1)^{|v_1|+1} \bsi v_1 v_2  \qquad {\rm if} ~~  q = 2\,, \\
 0  \qquad {\rm otherwise}\,,
\end{cases}
\end{equation}
where $v_i \in \Ger_n$ and $p$ is the canonical projection;
\begin{equation}
\label{p-Ger-n}
p : \coAs(\bsi \Ger_n) \to  \bsi \Ger_n\,.
\end{equation}

It is easy to see that the coderivation $\md$ has degree $1$.
Moreover, due to associativity of the multiplication on $\Ger_n$, 
we have 
$$
\md^2 = 0\,. 
$$
In other words, $\md$ is a differential on the coalgebra 
\eqref{coAs-bsi-Ger-n}.

For our purposes we need the following truncation of the 
cochain complex $\bs^2 \coAs(\bsi \Ger_n) $ 
\begin{equation}
\label{trunc-T-Ger-n}
\bs^2 T' (\bsi \Ger_n) =   \bigoplus_{q \ge 2} \bs^2 \big( \bsi \Ger_n \big)^{\otimes\, q}
\end{equation}
with the differential $\md'$ given by the formula:
\begin{equation}
\label{md-pr}
\md' \big( \bs^2  ( \bsi v_1 \otimes \bsi v_2 \otimes \dots \otimes \bsi v_q) \big) = 
\begin{cases}
\bs^2 \md ( \bsi v_1 \otimes \bsi v_2 \otimes \dots \otimes \bsi v_q)  \qquad {\rm if} ~~ q > 2  \,, \\
 0 \qquad {\rm if} ~~ q = 2\,,
\end{cases}
\qquad v_i \in \Ger_n\,.
\end{equation}

It is not hard to see that $E_1 V_\bullet(n)$  \eqref{E1-V-bul} is isomorphic
to the subspace of $\bs^2 T' (\bsi \Ger_n)$ which is spanned by 
tensor monomials
$$
\bs^2 ( \bsi v_1 \otimes \bsi v_2 \otimes \dots \otimes \bsi v_q)\,, 
\qquad v_i \in \Ger_n\,, \qquad 2 \le q \le n
$$
in which each variable from the set $\{a_1, a_2, \dots, a_n\}$
appears exactly once.  It is easy to see that this subspace is 
a subcomplex with respect to $\md'$ and, moreover, the differential 
$d_1$ coincides with the restriction of $\md'$ up to a total sign. 

Since the augmentation 
\begin{equation}
\label{aug-coAs-Ger-n}
\dots ~\stackrel{\md}{\longrightarrow}~
\big( \bsi \Ger_n \big)^{\otimes\, 2} ~\stackrel{\md}{\longrightarrow}~
 \bsi \Ger_n  ~\stackrel{0}{\longrightarrow}~ \bbK
\end{equation}
of the cochain complex \eqref{coAs-bsi-Ger-n} computes the Hochschild homology 
\begin{equation}
\label{HH}
HH_{-\bul} (S(\La\Lie_n), \bbK)
\end{equation}
of the free commutative algebra $S(\La\Lie_n)$ (with unit) with the trivial 
coefficients, we conclude that\footnote{In \cite{Loday}, J.-L. Loday only considers the 
case when the symmetric algebra is generated by an ``ungraded'' vector space and 
$HH_{\bul}$ is computed with coefficients in the symmetric algebra. However, the obvious 
generalization to the Koszul resolution to the graded case can be applied in the 
straightforward manner in our case.} 
\cite[Section 3.2]{Loday}
\begin{equation}
\label{H-coAs-Ger-n}
H^{\bul} (\coAs(\bsi \Ger_n), \md) = \bigoplus_{q \ge 1} S^q (\bsi \La\Lie_n)\,, 
\end{equation}
and the cohomology class of the symmetric word
$(\bsi v_1, \bsi v_2, \dots, \bsi v_q) \in  S^q (\bsi \La\Lie_n)$ is represented 
by the cocycle: 
$$
\frac{1}{q!}\, \sum_{\si \in S_q} (-1)^{\ve(\si, v_1, \dots, v_q)} (\bsi v_{\si(1)}, \bsi v_{\si(2)}, \dots, \bsi v_{\si(q)})
~\in~ \big( \bsi \Ger_n \big)^{\otimes\, q}\,,
$$
where the sign factors $ (-1)^{\ve(\si, v_1, \dots, v_q)}$ are determined by the Koszul rule. 

When we pass to the truncation \eqref{trunc-T-Ger-n} of the Hochschild complex, the cohomology in 
the terms $\big( \bsi \Ger_n \big)^{\otimes\, q}$ for $q \ge 3$ does not change. 

As for $q = 2$, all vectors in 
$$
\big( \bsi \Ger_n \big)^{\otimes\, 2}
$$
are cocycles in the truncated complex \eqref{trunc-T-Ger-n}. 

Since for every pair of vectors $v_1, v_2 \in \Ger_n$  
$$
\bsi v_1 \otimes  \bsi v_2 = 
$$
$$
\frac{(-1)^{|v_1|}}{2} \bsi \otimes \bsi 
(v_1 \otimes  v_2  + (-1)^{|v_1| |v_2|}  v_2 \otimes  v_1)
\, + \, \frac{1}{2} (\bsi v_1 \otimes  \bsi v_2  + (-1)^{(|v_1| +1) (|v_2| +1)} \bsi v_2 \otimes  \bsi v_1),
$$
~\\
we have the obvious decomposition
$$
\big( \bsi \Ger_n \big)^{\otimes\, 2}   \cong  \bs^{-2}\, S^{\ge 2} (\La\Lie_n)  ~\oplus~ S^2 \big( \bsi \Ger_n \big), 
$$
where $S^2 \big( \bsi \Ger_n \big)$ is precisely the kernel of 
\begin{equation}
\label{diff-needed}
\big( \bsi \Ger_n \big)^{\otimes\, 2} ~\stackrel{\md}{\longrightarrow}~ \bsi \Ger_n
\end{equation}
and $\bs^{-2} S^{\ge 2} (\La\Lie_n)$ is (up to the degree shift) the image of \eqref{diff-needed}.

Combining this observation with the knowledge about homology \eqref{HH}, we conclude that 
\begin{equation}
\label{H-trunc-T-Ger-n}
H^{\bul}\big( \bs^2 T' (\bsi \Ger_n), \md' \big) \cong  S^{\ge 2}(\La\Lie_n)  ~\oplus~
  \bigoplus_{q \ge 2} \bs^2 S^q (\bsi \La\Lie_n).
\end{equation}

On the other hand,  $E_1 V_\bullet(n)$ is isomorphic to the direct 
summand of the cochain complex $\big( \bs^2 T' (\bsi \Ger_n), \md' \big) $\,. 

Thus the first two statements of Claim \ref{cl:E2-V-bullet} follow from \eqref{H-trunc-T-Ger-n}. 
To deduce the remaining statements, we use the description of cohomology classes in $H^{\bul}(\Gr V_\bullet(n)  , d_0)$
corresponding to vectors in $\big( \bs \La \coAs_{\c} \odot \Ger \big)(n)$ 
(see eq. \eqref{d-0-cocycle}). 

The most involving statement is about the class corresponding to the vector    
\begin{equation}
\label{for-q-ge-3}
\bs^{2-q}\, 1_q \otimes (v_1 \otimes \dots \otimes v_q) \in  \bs \La\Com \odot \La \Lie (n)
\end{equation}
for $q \ge 3$. 

Using the information about the $E_1$ page, we know that \eqref{for-q-ge-3} is 
represented in $\cA^q_1$ by the vector 
\begin{equation}
\label{f-q}
f_q : =
\frac{1}{q!} \sum_{\si\in S_q} (-1)^{|\si|} \mu \big( \si ( T^{\bul}_q ) 
\otimes \mj(v_{1}) \otimes \mj(v_2) \otimes \dots \otimes \mj (v_q) \big).
\end{equation}
A direct computation shows that
$$
\de_0 \Big( \sum_{\si\in S_q} (-1)^{|\si|} \si  ( T^{\bul}_q ) \Big)  +
\de_0 \Big( \sum_{i=1}^{q-1}\sum_{\substack{\si\in S_q \\ \si(i) < \si(i+1)}} (-1)^{|\si|} 
 \si ( T^{\bul}_{q,i} )  \Big) \in \cF^{q-2} V_\bullet(n). 
$$
Therefore the sum
$$
u_q = f_q + 
 \frac{1}{q!} \sum_{i=1}^{q-1}\sum_{\substack{\si\in S_q \\ \si(i) < \si(i+1)}} (-1)^{|\si|} 
\mu \big( \si ( T^{\bul}_{q,i} ) \otimes \mj(v_{1}) \otimes \mj(v_2) \otimes \dots \otimes \mj (v_q) \big)
$$
belongs to $\cA^q_2$ and represents the element in $E^q_2$ corresponding to \eqref{for-q-ge-3}. 
 
Claim \ref{cl:E2-V-bullet} is proved.
\end{proof}

Due to Lemma \ref{lem:specseqabutment} from Appendix \ref{app:E2-Einfty},
this spectral sequence degenerates at the second page, i.e.,
\begin{equation}
\label{abuts-at-E2}
E_{\infty} V_\bullet(n) = E_2 V_\bullet(n).
\end{equation}

Hence Claim \ref{cl:E2-V-bullet} implies the following statement.

%
%
\begin{claim}
\label{cl:V-bullet}
For the complex $(V_{\bullet}(n), \de_0)$ we have  
\begin{equation}
\label{H-V-bullet}
H^{\bul}(V_\bullet(n), \delta_0)\cong   \Com \odot \La \Lie (n) \big/ \La \Lie (n)  ~\oplus ~
\bs \big( \La\Com \odot \La \Lie (n) \big/ \La \Lie (n) \big).
\end{equation}
Cohomology classes in $(V_{\bullet}(n), \de_0)$
corresponding to vectors in $\Com \odot \La \Lie (n) \big/ \La \Lie (n)$ are represented 
by cocycles (in $(\Br(n), \de)$) which are obtained by applying $\Psi$ \eqref{Psi} to linear combinations 
of monomials \eqref{Ger-n-basis} in $\Ger(n)$ with $t \ge 2$.  
The class corresponding to the vector 
$$
\bs^{2-q}\, 1_q \otimes (v_1 \otimes \dots \otimes v_q) \in  \bs \La\Com \odot \La \Lie (n), \qquad q \ge 2
$$
is represented in $(V_{\bullet}(n), \de_0)$ by the $\de_{0}$-cocycle of the form 
\begin{equation}
\label{cocycle-with-uq}
u_q + \dots
\end{equation}
where $u_q$ is the vector given in \eqref{fork-q-antisymm} and 
$\dots$ denotes the sum of terms in $\cF^{q-1} V_{\bullet}(n)$.
\end{claim}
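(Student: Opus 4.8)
The plan is to derive Claim \ref{cl:V-bullet} as a formal consequence of the computation of the $E_2$-page in Claim \ref{cl:E2-V-bullet} together with the degeneration \eqref{abuts-at-E2}. The filtration \eqref{filtr-V-bul} is finite, so the associated spectral sequence converges to $H^{\bul}(V_\bullet(n), \de_0)$, and Lemma \ref{lem:specseqabutment} tells us that it collapses at the second page. Hence the filtration induced on $H^{\bul}(V_\bullet(n), \de_0)$ has associated graded
$$
\Gr H^{\bul}(V_\bullet(n), \de_0) \;\cong\; E_\infty V_\bullet(n) \;=\; E_2 V_\bullet(n),
$$
and substituting the value \eqref{E2-V-bul} of $E_2 V_\bullet(n)$ gives the right-hand side of \eqref{H-V-bullet} at the level of associated graded spaces.

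First I would upgrade this associated-graded identity to the honest $S_n$-equivariant direct sum \eqref{H-V-bullet}. Since the ground field has characteristic zero, $\bbK[S_n]$ is semisimple, so every short exact sequence of $S_n$-modules splits (Maschke's theorem) and any finitely filtered graded $S_n$-module is $S_n$-equivariantly isomorphic to its associated graded. Applied to the finite filtration on $H^{\bul}(V_\bullet(n), \de_0)$, this yields $H^{\bul}(V_\bullet(n), \de_0) \cong \Gr H^{\bul}(V_\bullet(n), \de_0)$ as graded $S_n$-modules, and combining with the previous step establishes the claimed isomorphism.

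Next I would pin down the explicit representatives. The summand $\Com \odot \La\Lie(n)/\La\Lie(n)$ is concentrated in filtration degree $q = 2$, and by the last part of Claim \ref{cl:E2-V-bullet} its classes are represented in $\cA^2_2$ by $\Psi$ applied to linear combinations of the monomials \eqref{Ger-n-basis} with $t \ge 2$. These representatives are already bona fide $\de$-cocycles, hence $\de_0$-cocycles, because $T_{a_1 a_2}$ and $T_{\{a_1, a_2\}}$ are $\de$-closed and $\Psi$ is assembled from them (see Proposition \ref{prop:Psi-is-great!}), so no correction is needed here. For the second summand the issue flagged in Remark \ref{rem:not-a-cocycle} appears: the $E_2$-representative $u_q$ from \eqref{fork-q-antisymm} lies in $\cA^q_2$ but is not $\de_0$-closed, satisfying only $\de_0(u_q) \in \cF^{q-2} V_\bullet(n)$. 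Here I would exploit the vanishing of all higher differentials $d_r$ ($r \ge 2$) guaranteed by \eqref{abuts-at-E2}: a standard zig-zag argument lets one correct $u_q$ by adding terms of strictly lower filtration degree until the result is genuinely $\de_0$-closed, producing a cocycle of the advertised shape $u_q + \dots$ as in \eqref{cocycle-with-uq}.

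Most of the real content has already been extracted in Claim \ref{cl:E2-V-bullet} and, through the appendix, in Lemma \ref{lem:specseqabutment}, so the present statement is largely bookkeeping. The one place deserving care is the inductive correction of the non-closed representatives $u_q$, where at each stage one must check that the obstruction to closedness is a boundary for the appropriate differential and can therefore be absorbed into a lower-filtration term; the genuinely hard ingredient, namely the collapse \eqref{abuts-at-E2}, is black-boxed.
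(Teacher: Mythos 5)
Your proposal is correct and follows essentially the same route as the paper, which likewise deduces Claim \ref{cl:V-bullet} directly from Claim \ref{cl:E2-V-bullet} together with the degeneration $E_\infty V_\bullet(n) = E_2 V_\bullet(n)$ of Lemma \ref{lem:specseqabutment}; the paper simply treats this implication as immediate, while you spell out the standard bookkeeping (convergence of the finite filtration, the $S_n$-equivariant splitting via semisimplicity of $\bbK[S_n]$, and the zig-zag correction of $u_q$ by terms in $\cF^{q-1}V_\bullet(n)$), all of which is accurate.
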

\begin{remark}
\label{rem:claimdual}
One may, of course, dualize the statement of Claim \ref{cl:V-bullet}. The dual statement says that
\begin{equation}
\label{H-V-bul-decomp}
H^{\bul}(V_\bullet^*(n), \delta_0^*) \cong X^* \oplus U^*\,,
\end{equation}
where $X^* \subset \Ger(n)^*$ is the kernel of $\Ger(n)^*\to \La\Lie(n)^*$ and 
$U^*$ is the linear dual of 
$$
\bs \big( \La\Com \odot \La \Lie (n) \big/ \La \Lie (n) \big). 
$$
\end{remark}

\subsection{A technical claim about $H^{\bul}(\de^*_1) : H^{\bul}(V_{\bul}(n)^*, \de^*_0) \to H^{\bul}(V_{\circ}(n)^*, \de^*_0)$}
\label{sec:H-de-1}

Let summarize what we proved so far: 
\begin{itemize}

\item First, due to Claim \ref{cl:2}, 
\begin{equation}
\label{H-V-circ}
H^{k}(V_{\circ}(n), \delta_0)  =
\begin{cases}
\bbK[S_n] \qquad {\rm if} ~~ k =1-n  \,, \\
 0  \qquad {\rm otherwise}\,.
\end{cases}
\end{equation}

\item Second, due to Claim \ref{cl:V-bullet},
$$
H^{\bul}(V_\bullet(n), \delta_0) ~\cong ~  \Com \odot \La \Lie (n) \big/ \La \Lie (n)  ~\oplus ~
\bs \big( \La\Com \odot \La \Lie (n) \big/ \La \Lie (n) \big).
$$

\item The subspace 
\begin{equation}
\label{U}
\bs \big( \La\Com \odot \La \Lie (n) \big/ \La \Lie (n) \big)
\end{equation}
is concentrated in the degree $2-n$, and the subspace 
$$
\Com \odot \La \Lie (n) \big/ \La \Lie (n) 
$$ 
lives in degrees $2-n \le \tiny{\bullet} \le 0$.

\end{itemize}

Thus the operator $H^{\bul} (\de_1)$ sends vectors of  $H^{1-n}(V_{\circ}(n), \delta_0) $ to 
the space $H^{2-n}(V_\bullet(n), \delta_0)$. Hence, 
$$
H^{k} (\Br(n)) \cong 
\begin{cases}
H^{1-n}(V_{\circ}(n), \delta_0) \cap \ker \big( H^{\bul} (\de_1) \big) \quad {\rm if} ~~~ k=1-n  \,, \\[0.3cm]
H^{2-n}(V_\bullet(n), \delta_0) \big/ \mathrm{Im} \big( H^{\bul} (\de_1) \big) \quad {\rm if} ~~~ k=2-n  \,, \\[0.3cm]
H^{k}(V_\bullet(n), \delta_0) \qquad {\rm if} ~~~ 3-n \le k \le 0  \,, \\[0.3cm]
0  \qquad {\rm otherwise}\,.
\end{cases}
$$ 

Let us prove that
\begin{claim}
\label{cl:1-n-lower-bound}
The map 
$$
H^{\bul}(\mj) : \La\Lie(n) \to  H^{1-n} (\Br(n))
$$
is injective. In particular,  
\begin{equation}
\label{dim-for-1-n}
\dim\,  H^{1-n} (\Br(n)) \ge (n-1)!
\end{equation}
\end{claim}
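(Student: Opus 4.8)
The plan is to reduce the statement to the classical embedding of the Lie operad into the associative operad. First I would record that, since $T_{\{a_1,a_2\}}=T_{1\mbox{-}2}+T_{2\mbox{-}1}$ involves no neutral vertices and operadic compositions of neutral--free brace trees are again neutral--free, the whole image $\mj(\La\Lie(n))$ lies in $V_\circ(n)$ and consists of $\de$--cocycles. In particular, for $v\in\La\Lie(n)$ the vector $\mj(v)$ is $\de_0$--closed and its class lies in $\ker H^{\bul}(\de_1)$. By the case--by--case description of $H^{\bul}(\Br(n))$ obtained just above, we have $H^{1-n}(\Br(n))=H^{1-n}(V_\circ(n),\de_0)\cap\ker H^{\bul}(\de_1)$, so the inclusion of this subspace is injective and $H^{\bul}(\mj)$ is injective \emph{if and only if} the map $v\mapsto [\mj(v)]_{\de_0}$ into $H^{1-n}(V_\circ(n),\de_0)\cong\bbK[S_n]$ is injective.

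Next I would make this last map explicit using Claim~\ref{cl:2} and duality. The classes of the string--like trees $T^n_\la$ form a basis of $H^{1-n}(V_\circ(n),\de_0)$, and the dual basis is given by the classes of the same trees in $H^{\bul}(V_\circ^*(n),\de_0^*)$ (they pair diagonally under the tautological pairing on brace trees). Hence the coefficient of $[T^n_\la]$ in $[\mj(v)]_{\de_0}$ is exactly the coefficient of the string tree $T^n_\la$ in the brace--tree expansion of $\mj(v)$; denote this number $c_\la(v)$. Thus it remains to prove that the linear map $\pi\colon\La\Lie(n)\to\bbK[S_n]$, $\pi(v)=\sum_{\la}c_\la(v)\,\la$, is injective.

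The key observation is that $\pi$ is, up to the sign twist built into $\La$, the standard universal--enveloping map $\Lie(n)\hookrightarrow\As(n)=\bbK[S_n]$. Reading off string coefficients is multiplicative: if $P,Q$ are neutral--free brace trees, then the only insertions in the brace product $\mu(T_{1\mbox{-}2};P,Q)=P\{Q\}$ that produce a string are those attaching $Q$ at the very top of the string $P$, every other insertion creating a branching (hence non--string) vertex. Therefore the string part of $P\{Q\}$ is the concatenation of the strings of $P$ and $Q$, and the string part of $\mu(T_{\{a_1,a_2\}};P,Q)$ is the commutator of the two concatenations. Applying this inductively along the bracket structure of a Lie word $v$ shows that $\pi(v)$ is precisely the image of $v$ under the assignment sending a bracket to a concatenation commutator, i.e.\ the inclusion of the free Lie algebra into the free associative algebra evaluated on multilinear words. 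Since that inclusion is injective (classical PBW / Dynkin--Specht--Wever), so is $\pi$, which yields \eqref{dim-for-1-n} because $\dim\La\Lie(n)=(n-1)!$.

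The main obstacle is the multiplicativity statement in the previous paragraph: one must verify carefully that passing to string coefficients intertwines the brace composition $\mu$ with concatenation in the free associative algebra, i.e.\ that the branched trees discarded by $\pi$ never feed back into string coefficients after iterated compositions, and that the accompanying signs combine exactly into the suspension and $\sgn$ twist relating $\La\Lie$ to $\Lie$. Once this bookkeeping is in place, the identification of $\pi$ with the classical Lie--to--associative embedding, and hence its injectivity, are immediate.
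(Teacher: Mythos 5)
Your overall strategy---detecting $\mj(v)$ through the coefficients of the string-like trees $T^n_\la$ in its brace-tree expansion---is exactly the mechanism behind the paper's own proof, but as written your argument has two problems. The first is a literal error in the duality step: for $n\ge 3$ the string trees $T^n_\la$ are \emph{not} $\de_0$-cocycles in $V_\circ(n)$, so ``the classes of the $T^n_\la$'' do not form a basis of $H^{1-n}(V_\circ(n),\de_0)$. For instance, applying $\de_0$ to the string tree with vertices $1,2,3$ read upwards from the root and splitting the middle vertex produces the two trees in which vertex $1$ has a single neutral child whose children are $2$ and $3$ in the two planar orders; these are distinct basis trees and cannot cancel. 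Claim \ref{cl:2} is a statement about the \emph{dual} complex $(V_\circ^*(n),\de_0^*)$, where the $T^n_\la$ are automatically closed (no neutral vertices to contract). The damage is repairable, and in fact the whole duality detour is unnecessary: since every brace tree with $k$ neutral vertices has degree $k+1-n$, the space $\Br(n)$ vanishes in degrees below $1-n$, so there are no coboundaries in degree $1-n$ and $H^{1-n}(\Br(n))=\Br(n)^{1-n}\cap\ker\de$ (this is eq. \eqref{H-1n} in the paper). Hence injectivity of $H^{\bul}(\mj)$ is literally injectivity of the linear map $\mj|_{\La\Lie(n)}$, and composing with any linear functionals whatsoever---in particular string-coefficient extraction---is legitimate without any discussion of well-definedness on cohomology.

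The second problem is the one you flag yourself: the entire injectivity content of your proof sits in the unverified claim that string-coefficient extraction intertwines brace composition with concatenation, with signs assembling into the $\La$-twist, so that $\pi$ becomes the classical embedding $\La\Lie(n)\hookrightarrow\La\As(n)\cong\bs^{1-n}\bbK[S_n]\otimes\sgn_n$. That claim is true---it is in effect the factorization of the dual of $\mj$ through $\cT^*(n)\to\La^{-1}\coAs(n)\to\La^{-1}\coLie(n)$ stated at the start of Appendix \ref{app:E2-Einfty}---but that factorization is asserted there rather than proved, so you cannot simply cite it, and your proposal leaves it as ``bookkeeping to be done.'' This is where the paper takes a genuinely lighter route: it evaluates $\mj$ only on the left-normed basis $\leftbr a_1, a_{\tau(2)}\},\dots,a_{\tau(n)}\}$, $\tau\in S_{\{2,\dots,n\}}$, and shows by a short induction that each such image equals $\pm$ the string tree with $1,\tau(2),\dots,\tau(n)$ read upwards, plus terms that are \emph{not} string trees with vertex $1$ lowest. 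Injectivity then follows from pure triangularity: one needs only that each leading coefficient is $\pm1\neq 0$, never its precise sign, and no appeal to the Lie-into-associative embedding theorem is required. So either carry out your multiplicativity-of-string-parts computation honestly (signs included), or adopt the paper's leading-term argument, which avoids both the classical input and the sign bookkeeping.
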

\begin{proof}
Since $\Br(n)$ lives is degrees $1-n \le \bullet \le 0$, 
\begin{equation}
\label{H-1n}
H^{1-n} (\Br(n)) = \Br(n)^{1-n} \cap \ker(\de).
\end{equation}

It is not hard to prove (by induction on $n$) that 
$$
\begin{tikzpicture}
[lab/.style={draw, circle, minimum size=5, inner sep=1.5}, 
n/.style={draw, circle, fill, minimum size=5, inner sep=1}, scale=.7]
\draw  (-5,2) node { $\mj\big( \leftbr a_1, a_2\}, a_3 \} \dots, a_n\} \big) ~ = ~$};
\draw  (-1,2) node { $\pm$};
\draw (0,0) node[lab] (v1) {$\scriptstyle 1$};
\draw (0,-0.7) node[root] (r) {};
\draw (0,1) node[lab] (v2) {$\scriptstyle 2$};
\draw  (0,2.5) node { $\vdots$};
\draw (0,4) node[lab] (vn) {$\scriptstyle n$};
\draw (v1) edge (r) edge (v2) (v2) edge (0,1.5) (vn) edge (0,3.5);
\draw  (2,2) node { $+ ~  \dots $};
\end{tikzpicture}
$$
where $\dots$ is the sum of braces trees which do not involve string-like brace trees 
with vertex $1$ at the lowest position. 

Therefore, for every permutation $\tau \in S_{\{2, 3, \dots, n\}}$, we have  
$$
\begin{tikzpicture}
[lab/.style={draw, circle, minimum size=5, inner sep=1}, 
n/.style={draw, circle, fill, minimum size=5, inner sep=1}, scale=.7]
\draw  (-7,2) node { $\mj\big( \leftbr a_1, a_{\tau(2)}\}, a_{\tau(3)} \} \dots, a_{\tau(n)}\} \big) ~ = ~$};
\draw  (-1.5,2) node { $\pm$};
\draw (0,-0.2) node[lab] (v1) {$\scriptstyle 1$};
\draw (0,-1) node[root] (r) {};
\draw (0,1) node[lab] (v2) {$\scriptstyle \tau(2)$};
\draw  (0,2.5) node { $\vdots$};
\draw (0,4) node[lab] (vn) {$\scriptstyle \tau(n)$};
\draw (v1) edge (r) edge (v2) (v2) edge (0,2) (vn) edge (0,3);
\draw  (2.5,2) node { $+ ~~  \dots $};
\end{tikzpicture}
$$
where, as above, $\dots$ is the sum of braces trees which do not involve string-like brace trees 
with vertex $1$ at the lowest position. 

Thus, $\mj$ gives us $(n-1)!$ linearly independent vectors
$$
\big\{\, \mj\big( \leftbr a_1, a_{\tau(2)}\}, a_{\tau(3)} \} \dots, a_{\tau(n)}\} \big) \, \big\}_{\tau \in S_{\{2, 3, \dots, n\}} }
$$
in \eqref{H-1n}. 
 
Since the set 
$$
\big\{\, \leftbr a_1, a_{\tau(2)}\}, a_{\tau(3)} \} \dots, a_{\tau(n)}\}  \,  \big\}_{\tau \in S_{\{2, 3, \dots, n\}} }
$$
is a basis of $\La\Lie(n)$, the claim follows. 
\end{proof}

To prove the other inequality 
\begin{equation}
\label{dim-less-or-eq}
\dim\,  H^{1-n} (\Br(n)) \le (n-1)!
\end{equation}
we need the following technical statement: 
%
%
\begin{claim}
\label{cl:shuffles}
Let $1\le r \le n-1$ and
$$
\si = 
\left(
\begin{array}{cccccccc}
 1 & 2 & \dots  &  r & r+1& r+2  & \dots & n   \\
 i_1 & i_2 & \dots  &  i_r & j_1& j_2 &  \dots & j_{n-r}   \\
\end{array}
\right)
$$
be a permutation in $S_n$. Let $T_{||, \si, r}$ (resp.  $T^{\opp}_{||, \si, r}$) be the 
brace tree shown in figure \ref{fig:T-llr} (resp. in figure \ref{fig:T-llr-opp}).

The vector 
\begin{equation}
\label{VIP}
\frac{1}{2} (T_{||, \si, r} + (-1)^{r(n-r)} T^{\opp}_{||, \si, r} )
\end{equation}
is a cocycle in the dual complex $(V_{\bul}(n)^*, \de^*_0)$ representing 
a cohomology class corresponding to a vector in $U^*$, i.e. the dual of 
the subspace \eqref{U}. 

Moreover, the vector 
\begin{equation}
\label{de-1-VIP}
\frac{1}{2} \,\de^*_1 (T_{||, \si, r} + (-1)^{r(n-r)} T^{\opp}_{||, \si, r} )
\end{equation}
is cohomologous in $(V^*_{\circ}(n), \de^*_{0} )$ to  
\begin{equation}
\label{unshuffle-T-si}
\sum_{\tau \in \Sh_{r, n-r}}  (-1)^{|\tau|}\, T^n_{\si \circ \tau^{-1} }\,, 
\end{equation}
where $(-1)^{|\tau|}$ is the sign of the permutation $\tau$ and 
$\{ T^n_{\la} \}_{\la \in S_n}$ be the family of brace trees 
shown in figure \ref{fig:T-la}.
\end{claim}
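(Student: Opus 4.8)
The plan is to use throughout that $T_{||,\si,r}$ and $T^{\opp}_{||,\si,r}$ each have exactly one neutral vertex, namely their lowest non-root vertex $v$. \emph{Cocycle property and membership in $U^*$.} Since $v$ is the only neutral vertex, the set $\mathrm{Edges}_{\bul}$ consists of just the two non-root edges at $v$, and contracting either of them merges $v$ with a labeled neighbour, producing a tree whose lowest non-root vertex is labeled. Hence $\de^*$ applied to either tree lands entirely in $V_\circ(n)^*$, so $\de^*_0 T_{||,\si,r} = \de^*_0 T^{\opp}_{||,\si,r} = 0$ and $\de^* = \de^*_1$ on these trees; thus \eqref{VIP} is a $\de^*_0$-cocycle with no computation. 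To locate its class in $U^*$ I would read the two strings as single Lie words $v_1\in\La\Lie(r)$ and $v_2\in\La\Lie(n-r)$, so that $(T_{||,\si,r}, T^{\opp}_{||,\si,r})$ corresponds to $\bsi v_1\otimes\bsi v_2$ and its swap. The combination \eqref{VIP}, weighted by $(-1)^{r(n-r)} = (-1)^{|\bsi v_1||\bsi v_2|}$, is exactly the symmetric-square symmetriser, so under the chain-level model used in the proof of Claim \ref{cl:E2-V-bullet} it lies in the summand $S^2(\bsi\La\Lie_n)$ and not in the complementary image-of-$\md$ summand $S^{\ge 2}(\La\Lie_n)$. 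Dualising via Remark \ref{rem:claimdual}, its class lies in $U^*$.

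\emph{Reduction of $\de^*_1$ to a shuffle straightening.} Applying $\de^*_1$ contracts one of the two edges joining $v$ to the bottoms $i_1$, $j_1$ of the two strings, so $\de^*_1 T_{||,\si,r}$ is a signed two-term sum of neutral-free, branched brace trees: one with lowest vertex $i_1$ carrying the chains $(i_2,\dots,i_r)$ and $(j_1,\dots,j_{n-r})$, the other with lowest vertex $j_1$ carrying $(i_1,\dots,i_r)$ and $(j_2,\dots,j_{n-r})$. By Remark \ref{rem:TTprime} each branched tree is $\de^*_0$-cohomologous to a combination of the string-like trees $T^n_\la$; the real content is to pin down these combinations. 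I would isolate this as a straightening lemma: a neutral-free tree whose lowest vertex carries two chains $A$ and $B$ is $\de^*_0$-cohomologous to $\sum_\tau (-1)^{|\tau|}$ times the string obtained by interleaving $A$ and $B$ according to $\tau$, the sum running over all interleavings. I would prove this by induction on $|A|+|B|$, using the homotopy already implicit in the proof of Claim \ref{cl:2}: inserting a neutral vertex just above the branch point and computing $\de^*_0$ expresses the branched tree, modulo an exact term, as the signed sum of the two trees in which either the first vertex of $A$ or the first vertex of $B$ has been pulled below the branch. This is precisely the shuffle recursion $\Sh_{p,q} = a_1\cdot\Sh_{p-1,q}\sqcup b_1\cdot\Sh_{p,q-1}$, whose base case is the identity $\de^*(T_\cup) = T_{1\text{-}2} - T_{2\text{-}1}$ of Subsection \ref{sec:Br-dual}.

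\emph{Assembling the answer.} Granting the lemma, the two terms of $\de^*_1 T_{||,\si,r}$ fill in the two halves of the recursion for $\Sh_{r,n-r}$, so straightening produces $\sum_{\tau\in\Sh_{r,n-r}} (-1)^{|\tau|} T^n_{\si\circ\tau^{-1}}$ up to an overall normalisation; adding the opposite tree with its sign $(-1)^{r(n-r)}$ and dividing by $2$ fixes that normalisation, which I would verify on the case $n=2$, $r=1$, where \eqref{VIP} equals $\tfrac12(T_\cup - T_\cup^{\opp})$ and \eqref{de-1-VIP} equals $T^2_{\si} - T^2_{\si(1\,2)}$. The one genuinely delicate point, and the step I expect to absorb most of the work, is the sign bookkeeping in the straightening lemma: one must check that the signs dictated by the total order on edges at each insertion of a neutral vertex accumulate into the global sign $(-1)^{|\tau|}$ of the shuffle, and that the relabelling effected by pulling vertices below the branch is recorded exactly by $\si\circ\tau^{-1}$. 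Everything else is either formal or a direct appeal to Remark \ref{rem:TTprime}, Remark \ref{rem:claimdual}, and the base identity for $T_\cup$.
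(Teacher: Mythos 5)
Your proposal is correct and is essentially the paper's own argument: membership in $U^*$ is established there by the same symmetry-versus-antisymmetry mechanism (the representatives $\Psi(c)$ of classes in $\Com \odot \La\Lie(n)\big/\La\Lie(n)$ are antisymmetric under swapping the two branches at the lowest neutral vertex, while \eqref{VIP} is symmetric, so the pairing vanishes), and the shuffle identity is proved by the same homotopy of inserting a neutral vertex just above the branch point, combined with the recursion splitting $\Sh_{r,n-r}$ according to whether a shuffle begins with $i_1$ or with $j_1$. The only difference is organizational: the paper runs a single induction on $n$ to prove \eqref{de-1-VIP-new} and then invokes graded commutativity of the shuffle product to pass to the symmetrized vector \eqref{de-1-VIP}, whereas you isolate a straightening lemma proved by induction on the total chain length --- a cosmetic repackaging of the same computation.
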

\begin{figure}[htp] 
\centering 
\begin{minipage}[t]{0.45\linewidth}
\centering
\begin{tikzpicture}
[lab/.style={draw, circle, minimum size=5, inner sep=1}, 
n/.style={draw, circle, fill, minimum size=5, inner sep=1}, scale=.7]
\draw (0,-0.7) node[root] (r) {};
\draw (0,0) node[n] (n) {};
\draw (-0.7,0.8) node[lab] (vi1) {$\scriptstyle i_1$};
\draw (-0.7, 2) node[lab] (vi2) {$\scriptstyle i_2$};
\draw  (-0.7,3.5) node { $\vdots$};
\draw (-0.7, 5) node[draw, circle, minimum size=5, inner sep=3] (vir) {$\scriptstyle i_r$};
\draw (0.7,0.8) node[lab] (vj1) {$\scriptstyle j_1$};
\draw (0.7, 2) node[lab] (vj2) {$\scriptstyle j_2$};
\draw  (0.7,3.5) node { $\vdots$};
\draw (0.7, 5) node[lab] (vjrn) {$\scriptstyle j_{n-r}$};
\draw (n) edge (r) edge (vi1) edge (vj1)  (vi1) edge (vi2)  (vj1) edge (vj2) 
(vi2) edge (-0.7,2.6) (vj2) edge (0.7,2.6) (vir) edge (-0.7,4) (vjrn) edge (0.7, 4);
\end{tikzpicture}
\caption{The brace tree $T_{||,\si, r}$} \label{fig:T-llr}
\end{minipage}
~
\begin{minipage}[t]{0.45\linewidth}
\centering
\begin{tikzpicture}
[lab/.style={draw, circle, minimum size=5, inner sep=1}, 
n/.style={draw, circle, fill, minimum size=5, inner sep=1}, scale=.7]
\draw (0,-0.7) node[root] (r) {};
\draw (0,0) node[n] (n) {};
\draw (0.7,0.8) node[lab] (vi1) {$\scriptstyle i_1$};
\draw (0.7, 2) node[lab] (vi2) {$\scriptstyle i_2$};
\draw  (0.7,3.5) node { $\vdots$};
\draw (0.7, 5) node[draw, circle, minimum size=5, inner sep=3] (vir) {$\scriptstyle i_r$};
\draw (-0.7,0.8) node[lab] (vj1) {$\scriptstyle j_1$};
\draw (-0.7, 2) node[lab] (vj2) {$\scriptstyle j_2$};
\draw  (-0.7,3.5) node { $\vdots$};
\draw (-0.7, 5) node[lab] (vjrn) {$\scriptstyle j_{n-r}$};
\draw (n) edge (r) edge (vi1) edge (vj1)  (vi1) edge (vi2)  (vj1) edge (vj2) 
(vi2) edge (0.7,2.6) (vj2) edge (-0.7,2.6) (vir) edge (0.7,4) (vjrn) edge (-0.7, 4);
\end{tikzpicture}
\caption{The brace tree $T^{\opp}_{||,\si, r}$} \label{fig:T-llr-opp}
\end{minipage}
\end{figure}
\begin{figure}[htp] 
\centering 
\begin{tikzpicture}
\node[root] (r) at (0,0) {};
\node[ext] (t1) at (0,1) {$\scriptstyle \la(1)$};
\node[ext] (t2) at (0,2.5) {$\scriptstyle \la(2)$};
\node[] (dots) at (0,4) {$\vdots$};
\node[ext] (tn) at (0,5.5) {$\scriptstyle \la(n)$};
\draw (r) edge (t1) (t1) edge (t2) (t2) edge (dots) (dots) edge (tn);
\end{tikzpicture}
\caption{The brace tree $T^n_{\la}$. Here $\la \in S_n$} \label{fig:T-la}
\end{figure}

\begin{proof}
First, every brace tree with exactly one neutral vertex (at the lowest position) is 
a cocycle in  $(V_{\bul}(n)^*, \de^*_0)$. 

To prove that the vector \eqref{VIP} belongs to $U^*$, we need to show that 
the pairing
\begin{equation}
\label{pairing-zero}
(T_{||, \si, r} + (-1)^{r(n-r)} T^{\opp}_{||, \si, r} )\, (w) = 0,
\end{equation}
where $w$ is a cocycle representing a cohomology class in $H^{\bul}(V_\bullet(n), \delta_0)$
corresponding a vector in $\Com \odot \La \Lie (n) \big/ \La \Lie (n)$. 

Due to Claim \ref{cl:V-bullet}, we may assume that 
$$
w = \Psi(c ),
$$
where $c$ is a linear combination of monomials \eqref{Ger-n-basis} in $\Ger(n)$ with $t  = 2$.  

Since $\Psi(c)$ is a linear combination of expressions of the form 
$$
\si \circ \mu \big( (T_{\cup} + T^{\opp}_{\cup})  \otimes \mj(h_1) \otimes \mj(h_2) \big),
$$ 
where $h_1 \in \La\Lie(n_1)$, $h_2 \in \La\Lie(n - n_1)$, $\mu$ is the operadic 
multiplication, and $\si \in S_{n}$, 
the vector $\Psi(c)$ is anti-symmetric with respect to the $S_2$ action 
on $\cF^2 V_\bullet(n)$ which switches the two branches originating from 
the lowest non-root vertex.   
 
On the other hand, the vector \eqref{VIP} is symmetric with respect to 
this $S_2$ action. Hence \eqref{pairing-zero} follows.   
 
We will now prove that 
\begin{equation}
\label{de-1-VIP-new}
\de^*_1 (T_{||, \si, r} )  ~ - ~ \sum_{\tau \in \Sh_{r, n-r}}  (-1)^{|\tau|}\, T^n_{\si \circ \tau^{-1} } ~~\in~~ \de^*_0 \big( V^*_{\circ}(n) \big).
\end{equation}
Then the desired statement about the vector \eqref{de-1-VIP} will follow from the graded commutativity of 
the shuffle product. 

The simple calculation shown in figure \ref{fig:case-n2} proves 
\eqref{de-1-VIP-new} in the case when $n=2$ (and $r=1$). 
%
%
\begin{figure}[htp] 
\centering
\begin{tikzpicture}
[lab/.style={draw, circle, minimum size=5, inner sep=1}, 
n/.style={draw, circle, fill, minimum size=5, inner sep=1}, scale=.7]
\draw  (-2,0) node { $\de^*_1 $};
\draw (0,-0.7) node[root] (rr) {};
\draw (0,0) node[n] (n) {};
\draw (-0.7, 0.7) node[lab] (v1) {$\scriptstyle 1$};
\draw (0.7, 0.7) node[lab] (v2) {$\scriptstyle 2$};
\draw (n) edge (rr) edge (v1) edge (v2); 
\draw  (2,0) node { $ = $};
\begin{scope}[shift={(4,0)}]
\draw (0,-0.7) node[root] (rr) {};
\draw (0, 0) node[lab] (v1) {$\scriptstyle 1$};
\draw (0, 0.9) node[lab] (v2) {$\scriptstyle 2$};
\draw (v1) edge (rr) edge (v2); 
\end{scope}
\draw  (6,0) node { $ - $};
\begin{scope}[shift={(8,0)}]
\draw (0,-0.7) node[root] (rr) {};
\draw (0, 0) node[lab] (v2) {$\scriptstyle 2$};
\draw (0, 0.9) node[lab] (v1) {$\scriptstyle 1$};
\draw (v2) edge (rr) edge (v1); 
\end{scope}
\end{tikzpicture}
\caption{The proof of \eqref{de-1-VIP-new} in the case $n=2$} \label{fig:case-n2}
\end{figure}
This also settles the base of our induction. 

Next, we observe that the linear combination 
$$
\begin{tikzpicture}
[lab/.style={draw, circle, minimum size=5, inner sep=1}, 
n/.style={draw, circle, fill, minimum size=5, inner sep=1}, scale=.7]
\draw  (-4.5,1) node { $\de^*_1(T_{||, \si, r} ) ~~  $};
\draw  (-2.5,1) node { $ - \quad \de^*_0 $};
\draw (0,-0.7) node[root] (r) {};
\draw (0,0) node[lab] (vi1) {$\scriptstyle i_1$};
\draw (0,1) node[n] (n) {};
\draw (-0.7, 1.8) node[lab] (vi2) {$\scriptstyle i_2$};
\draw  (-0.7,3.5) node { $\vdots$};
\draw (-0.7, 5) node[draw, circle, minimum size=5, inner sep=3] (vir) {$\scriptstyle i_r$};
\draw (0.7,1.8) node[lab] (vj1) {$\scriptstyle j_1$};
\draw (0.7, 3) node[lab] (vj2) {$\scriptstyle j_2$};
\draw  (0.7,4.5) node { $\vdots$};
\draw (0.7, 6) node[lab] (vjrn) {$\scriptstyle j_{n-r}$};
\draw (r) edge (vi1)  (n) edge (vi1) edge (vi2) edge (vj1)
(vj1) edge (vj2) 
(vi2) edge (-0.7,2.6) (vj2) edge (0.7,3.6) (vir) edge (-0.7,4) (vjrn) edge (0.7, 5);
\draw  (3,1) node { $ - \quad (-1)^r ~ \de_0^* $};
\begin{scope}[shift={(6,0)}]
\draw (0,-0.7) node[root] (r) {};
\draw (0,0) node[lab] (vj1) {$\scriptstyle j_1$};
\draw (0,1) node[n] (n) {};

\draw (-0.7,1.8) node[lab] (vi1) {$\scriptstyle i_1$};
\draw (-0.7, 3) node[lab] (vi2) {$\scriptstyle i_2$};
\draw  (-0.7,4.5) node { $\vdots$};
\draw (-0.7, 6) node[draw, circle, minimum size=5, inner sep=3] (vir) {$\scriptstyle i_r$};
\draw (0.7, 1.8) node[lab] (vj2) {$\scriptstyle j_2$};
\draw  (0.7,3.5) node { $\vdots$};
\draw (0.7, 5) node[lab] (vjrn) {$\scriptstyle j_{n-r}$};
\draw  (r) edge (vj1) (n) edge (vj1) edge (vi1) edge (vj2) 
(vi1) edge (vi2)  (vi2) edge (-0.7,3.6) (vj2) edge (0.7,2.6) (vir) edge (-0.7,5) (vjrn) edge (0.7, 4);
\end{scope}
\end{tikzpicture}
$$
is obtained from 
$$
\begin{tikzpicture}
[lab/.style={draw, circle, minimum size=5, inner sep=1}, 
n/.style={draw, circle, fill, minimum size=5, inner sep=1}, scale=.7]
\draw  (-2.5,1) node { $ - \quad \de^*_0 $};
\draw (0,-0.7) node[root] (r) {};
\draw (0,0) node[lab] (vi1) {$\scriptstyle i_1$};
\draw (0,1) node[n] (n) {};
\draw (-0.7, 1.8) node[lab] (vi2) {$\scriptstyle i_2$};
\draw  (-0.7,3.5) node { $\vdots$};
\draw (-0.7, 5) node[draw, circle, minimum size=5, inner sep=3] (vir) {$\scriptstyle i_r$};
\draw (0.7,1.8) node[lab] (vj1) {$\scriptstyle j_1$};
\draw (0.7, 3) node[lab] (vj2) {$\scriptstyle j_2$};
\draw  (0.7,4.5) node { $\vdots$};
\draw (0.7, 6) node[lab] (vjrn) {$\scriptstyle j_{n-r}$};
\draw (r) edge (vi1)  (n) edge (vi1) edge (vi2) edge (vj1)
(vj1) edge (vj2) 
(vi2) edge (-0.7,2.6) (vj2) edge (0.7,3.6) (vir) edge (-0.7,4) (vjrn) edge (0.7, 5);
\draw  (3,1) node { $ - \quad (-1)^r ~ \de_0^* $};
\begin{scope}[shift={(6,0)}]
\draw (0,-0.7) node[root] (r) {};
\draw (0,0) node[lab] (vj1) {$\scriptstyle j_1$};
\draw (0,1) node[n] (n) {};

\draw (-0.7,1.8) node[lab] (vi1) {$\scriptstyle i_1$};
\draw (-0.7, 3) node[lab] (vi2) {$\scriptstyle i_2$};
\draw  (-0.7,4.5) node { $\vdots$};
\draw (-0.7, 6) node[draw, circle, minimum size=5, inner sep=3] (vir) {$\scriptstyle i_r$};
\draw (0.7, 1.8) node[lab] (vj2) {$\scriptstyle j_2$};
\draw  (0.7,3.5) node { $\vdots$};
\draw (0.7, 5) node[lab] (vjrn) {$\scriptstyle j_{n-r}$};
\draw  (r) edge (vj1) (n) edge (vj1) edge (vi1) edge (vj2) 
(vi1) edge (vi2)  (vi2) edge (-0.7,3.6) (vj2) edge (0.7,2.6) (vir) edge (-0.7,5) (vjrn) edge (0.7, 4);
\end{scope}
\end{tikzpicture}
$$
by retaining only the terms which are obtained by contracting only the edges which 
are adjacent to the neutral vertex and lie above this neutral vertex.  

Thus the inductive step follows from the fact that 
the set of shuffles $\Sh_{r, n-r}$ splits into the disjoint union 
of permutations of the form 
$$
\left(
\begin{array}{ccccccc}
 1 &   2  &   \dots  &   r      &  r+1      & \dots &   n \\
 1 & \si(2)  & \dots & \si(r ) & \si(r+1) & \dots & \si(n)    
\end{array}
\right) 
$$
with $\si \in S_{\{2,3, \dots, n\}}$, $\si(2) < \si(3) < \dots < \si(r )$,  $\si(r+1) < \si(r+2) < \dots < \si(n )$, and 
permutations of the form 
$$
\left(
\begin{array}{cccccccc}
 1 &   2  &   \dots  &   r      &  r+1     & r+2 & \dots &   n \\
 \si(1) & \si(2)  & \dots & \si(r ) & 1 & \si(r+2) &\dots & \si(n)    
\end{array}
\right), 
$$
where $\si$ is a bijection $\si: \{1,2,\dots, r, r+2, \dots, n\}$ to 
$\{2,3, \dots, n\}$ such that  $\si(1) < \si(2) < \dots < \si(r )$ and 
 $\si(r+2) < \si(r+3) < \dots < \si(n)$. 
 
Claim \ref{cl:shuffles} is proved.  
\end{proof}

Let us recall that, 
for every graded vector space\footnote{The isomorphism \eqref{coLie-V} is the dual version of  \cite[Proposition 1.3.5]{Loday-Vallette}.} $V$,
\begin{equation}
\label{coLie-V}
\coLie(V) \cong \coAs(V) /  \coAs(V) \bullet_{\Sh} \coAs(V),
\end{equation}
where $\bullet_{\Sh}$ denotes the shuffles product. 

Thus Claims \ref{cl:2} and \ref{cl:shuffles} imply that 
$$
\dim\,  H^{n-1} (\Br(n)^*) \le (n-1)!
$$
and the desired inequality \eqref{dim-less-or-eq} follows.

\subsection{The final strokes} 
\label{sec:the-end}

Combining Claim \ref{cl:1-n-lower-bound} with the inequality \eqref{dim-less-or-eq}, we conclude that 
\begin{equation}
\label{dim-for-1-n-done}
\dim\,  H^{1-n} (\Br(n)) = (n-1)!
\end{equation}
and the restriction $\displaystyle \Psi \big|_{\La\Lie(n)}$ induces an 
isomorphism 
$$ 
\La\Lie(n) \cong H^{1-n} (\Br(n)).
$$

Hence, due to the summary given on page \pageref{H-V-circ} and the 
second statement of Claim \ref{cl:V-bullet}, it suffices to show that
\begin{equation}
\label{desired}
\coker \Big(   H^{\bul}(V_\circ(n), \delta_0)~  \xrightarrow{ ~~~~H^{\bul}(\de_1) \hspace*{0.2cm}}   ~  H^{\bul}(V_{\bul}(n), \delta_0) \Big) ~\cong~
 \Com \odot \La \Lie (n) \big/ \La \Lie (n).
\end{equation}

The later is a consequence of \eqref{H-V-bullet}, the equality 
$\dim\,  H^{n-1} (\Br(n)^*) = (n-1)!$ and Claim \ref{cl:shuffles}.
Indeed, due to  Claim \ref{cl:shuffles} and equality $\dim\,  H^{n-1} (\Br(n)^*) = (n-1)!$, 
the dimension of the space
$$
H^{\bul}(\de_1^*) \big(U^*\big)
$$
should be equal to $n! - (n-1)!$, where $U^*$ is the linear dual of \eqref{U}.

On the other hand, $\dim (U) = n! - (n-1)! = \dim (U^*)$ and hence the restriction of 
$H^{\bul}(\de_1^*)$ to $U^*$ is an isomorphism of vector spaces
$$
U^*  ~\cong ~ H^{\bul}(\de_1^*) \big(U^*\big) \subset H^{n-1}(V_\circ(n)^*, \delta^*_0). 
$$

Therefore, by duality,  the composition of $H^{\bul}(\de_1)$ with the 
projection 
$$
H^{2-n}(V_{\bul}(n), \delta_0) ~\to~ U  
$$
gives us an isomorphism of vector spaces
$$
H^{1-n} (V_\circ(n), \delta_0) \big/ \ker(H^{\bul}(\de_1)) ~\cong~ U.
$$

Thus the desired isomorphism \eqref{desired} follows and the proof of 
Theorem \ref{thm:H-Br} is complete. 

%
%

\appendix

\section{Verification of the Gerstenhaber relations}
\label{app:Ger-relation}
As above, $T_{\{a_1, a_2 \}}$ and $T_{a_1 a_2}$ denote the following vectors in $\Br(2)$: 
$$
T_{\{a_1, a_2 \}} : = T_{1\mbox{-}2} +  T_{2\mbox{-}1}, 
\qquad  
T_{a_1 a_2} : = \frac{1}{2}(T_{\cup} + T^{\opp}_{\cup}),
$$
where $T_{1\mbox{-}2}$, $T_{2\mbox{-}1}$, $T_{\cup}$, and $T^{\opp}_{\cup}$ 
are the brace trees shown in figure \ref{fig:examples}. 

The goal of this appendix is to prove the following statement. 
%
%
\begin{claim}
\label{cl:Ger-relations}
The vector  $T_{\{a_1, a_2 \}}$ satisfies the Jacobi identity 
\begin{equation}
\label{Jac}
T_{\{a_1, a_2 \}} \circ_1 T_{\{a_1, a_2 \}} + (1,2,3) \big(  T_{\{a_1, a_2 \}} \circ_1 T_{\{a_1, a_2 \}} \big) + 
(1,3,2) \big( T_{\{a_1, a_2 \}} \circ_1 T_{\{a_1, a_2 \}} \big) = 0
\end{equation}
and the vector $T_{a_1 a_2}$ fulfills these properties: 
\begin{equation}
\label{assoc-homot}
T_{a_1 a_2} \circ_1 T_{a_1 a_2} - T_{a_1 a_2} \circ_2 T_{a_1 a_2} \in \textrm{Im}(\de)
\end{equation}
\begin{equation}
\label{Leibniz-homot}
T_{\{a_1, a_2\}} \circ_2 T_{a_1 a_2} - T_{a_1 a_2} \circ_1 T_{\{a_1, a_2\}} - 
(1,2) \big( T_{a_1 a_2} \circ_2 T_{\{a_1, a_2\}} \big)  \in \textrm{Im}(\de). 
\end{equation}
\end{claim}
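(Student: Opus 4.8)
The plan is to reduce all three statements to finite, explicit computations with brace trees of arity two and three, since each expression lives in $\Br(3)$. The Jacobi expression \eqref{Jac} has degree $-2$, the associativity expression \eqref{assoc-homot} has degree $0$, and the Leibniz expression \eqref{Leibniz-homot} has degree $-1$. I would first record the small relevant bases of $\Br(3)$: a degree $-2$ brace tree has no neutral vertex (a planar rooted tree on the three labeled vertices with two non-root edges, i.e. a string $T^3_{\la}$ or a fork whose lowest labeled vertex carries two children); a degree $-1$ brace tree has exactly one neutral vertex (with at least two children); and a degree $0$ brace tree has exactly two neutral vertices. With these finite lists in hand, every $\circ_i$ and every application of $\de$ becomes a bounded bookkeeping problem.

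For the Jacobi identity \eqref{Jac} I would argue structurally rather than by brute force. The operation $B := T_{1\mbox{-}2}$, which inserts the subtree of the second input onto the first labeled vertex, is a graded right \emph{pre-Lie} product of degree $-1$: a single direct computation of the associator $B\circ_1 B - B\circ_2 B$ in $\Br(3)$, obtained by reconnecting edges in all planar-compatible ways, shows that it is symmetric under exchanging inputs $2$ and $3$ (the one-brace relation). Since $T_{\{a_1, a_2 \}} = B + (1\,2)\, B$ is the appropriately signed symmetrization of $B$, the graded Jacobi relation follows formally, exactly as the commutator of a pre-Lie algebra is a Lie bracket; here the degree shift and the $\sgn$-twist built into $\La\Lie$ account for the bracket being $S_2$-invariant rather than anti-invariant. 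As a self-contained alternative, one simply expands the three cyclic terms over the degree $-2$ basis and checks term-by-term cancellation.

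For \eqref{assoc-homot} and \eqref{Leibniz-homot} the strategy is to produce explicit homotopies. In each case I would expand the left-hand side over the relevant basis using the insertion rule and the definitions of $T_{\cup}, T_{\cup}^{\opp}$, confirm that the result is a $\de$-cocycle, and then exhibit a brace tree (or short combination) $H$ with $\de(H)$ equal to it. For the associator \eqref{assoc-homot}, which has degree $0$, the homotopy $H$ has degree $-1$ and is therefore supported on single-neutral-vertex trees; the natural candidate is a symmetrized brace-type tree, whose differential—computed through $\de'_j, \de''_j, \de_v$ as in figure \ref{fig:branches}—I would match against the cup-associator. For the Leibniz expression \eqref{Leibniz-homot}, which has degree $-1$, the homotopy has degree $-2$ and so is a combination of neutral-free trees (strings and forks), which I would pin down by matching the string-like terms produced by $\de$.

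The main obstacle throughout is sign bookkeeping. The signs are governed by treating non-root edges as anticommuting variables ordered by the canonical total order on $\mathrm{Edges}(T)$, and every operadic insertion creates new edges that must be commuted into position; the authors already flag this delicacy (e.g. the overall sign $\de = -\pa$ relative to \cite{DW} and the sign discussion around figure \ref{fig:exam-diff}). I would therefore fix the edge-ordering and insertion conventions once, propagate them mechanically, and use the known action on Hochschild cochains—where $T_{\cup}$ is the cup product and $T_{1\mbox{-}2} + T_{2\mbox{-}1}$ is the Gerstenhaber bracket, so that \eqref{Jac}--\eqref{Leibniz-homot} must hold—as an independent check on the signs, the combinatorial verification itself remaining self-contained.
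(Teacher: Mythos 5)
Your overall strategy is sound, and your degree bookkeeping in $\Br(3)$ (degree $-2$ means no neutral vertex, degree $-1$ means exactly one, degree $0$ means exactly two) is exactly right; but your route differs from the paper's in two respects, and one of them conceals a genuine sign trap. For the Jacobi identity \eqref{Jac} the paper simply expands $T_{\{a_1,a_2\}}\circ_1 T_{\{a_1,a_2\}}$ into its eight brace trees (figure \ref{fig:Jac}) and checks that the three cyclic sums cancel pairwise. Your structural pre-Lie argument is a legitimate alternative, and is in fact a smaller computation, \emph{but the relation as you state it fails in} $\Br$. With the paper's edge-ordering conventions one finds
\begin{equation*}
T_{1\mbox{-}2}\circ_1 T_{1\mbox{-}2} \;=\; A_{32} - A_{23} - S,
\qquad\qquad
T_{1\mbox{-}2}\circ_2 T_{1\mbox{-}2} \;=\; S,
\end{equation*}
where $A_{32}$ (resp. $A_{23}$) denotes the tree with lowest vertex $1$ carrying children $3,2$ (resp. $2,3$) in planar order, and $S$ is the string $1$--$2$--$3$; the first identity is precisely the first three terms of figure \ref{fig:Jac}. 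Consequently the \emph{difference} $T_{1\mbox{-}2}\circ_1 T_{1\mbox{-}2}-T_{1\mbox{-}2}\circ_2 T_{1\mbox{-}2}=A_{32}-A_{23}-2S$ is neither invariant nor anti-invariant under $(2\,3)$: the string terms do not cancel there. The combination with the correct symmetry in the shifted setting is the \emph{sum}, $T_{1\mbox{-}2}\circ_1 T_{1\mbox{-}2}+T_{1\mbox{-}2}\circ_2 T_{1\mbox{-}2}=A_{32}-A_{23}$, which is anti-invariant under $(2\,3)$; this is the defining relation of the operadic suspension $\La\mathsf{preLie}$, and once the relation is restated in this $\La$-twisted form your formal deduction of \eqref{Jac} for the symmetrization $T_{1\mbox{-}2}+T_{2\mbox{-}1}$ does go through. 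So the idea survives, but only after fixing exactly the kind of sign slip you warn about yourself; your brute-force fallback coincides with the paper's actual proof.

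For \eqref{assoc-homot} and \eqref{Leibniz-homot} your plan of exhibiting explicit homotopies supported on one-neutral-vertex trees, respectively neutral-free trees, matches the paper in spirit, and the paper's homotopies are exactly of the type you predict: the tree $T^{\bul}_3$ (one neutral vertex with children $1,2,3$) for associativity, and the fork $T_{1\mbox{-}(2,3)}$ for Leibniz. The one organizational device you miss, which shortens the computation considerably, is the identity $\de(T_{1\mbox{-}2})=T_\cup - T^{\opp}_\cup$, i.e.\ equation \eqref{Ta1a2-cup}: $T_{a_1a_2}=T_\cup-\tfrac12\,\de(T_{1\mbox{-}2})$. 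Since $\circ_i$ against a $\de$-cocycle carries coboundaries to coboundaries, this reduces both homotopy statements from the four-term symmetrized product $T_{a_1a_2}$ to the single tree $T_\cup$, after which only one small homotopy must be found in each case; working with $T_{a_1a_2}$ directly, as you propose, is feasible but multiplies the number of terms to be matched.
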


\begin{proof}
The insertion $T_{\{a_1, a_2 \}} \circ_1 T_{\{a_1, a_2 \}}$ is computed explicitly 
in figure \ref{fig:Jac}. It is clear that the sum over the cyclic permutations
of the first term (resp. the third term) will cancel the sum over the cyclic permutations
of the sixth term (resp. the forth term). Similarly, the sum over the cyclic permutations 
of the second term (resp. the fifth term) cancels the sum over the cyclic permutations of 
the seventh term (resp. the eighth term). Thus identity \eqref{Jac} holds. 
%
%
\begin{figure}[htp] 
\centering
\begin{minipage}[t]{\linewidth}
\centering 
\begin{tikzpicture}
[lab/.style={draw, circle, minimum size=5, inner sep=1}, 
n/.style={draw, circle, fill, minimum size=5, inner sep=1}, scale=.7]
\draw  (-1,0) node { $\Big($};
\draw  (0,-0.7) node[root] (rr) {};
\draw (0,0) node[lab] (v1) {$\scriptstyle 1$};
\draw (0,0.7) node[lab] (v2) {$\scriptstyle 2$};
\draw (v1) edge (rr) edge (v2);
\draw  (1,0) node { $+$};
\draw  (2,-0.7) node[root] (rrr) {};
\draw (2,0) node[lab] (vv2) {$\scriptstyle 2$};
\draw (2,0.7) node[lab] (vv1) {$\scriptstyle 1$};
\draw (vv2) edge (rrr) edge (vv1);
\draw  (3,0) node { $\Big)$};
\draw  (3.7,0) node { $\circ_1$};
\begin{scope}[shift={(5.5,0)}]
\draw  (-1,0) node { $\Big($};
\draw  (0,-0.7) node[root] (rr) {};
\draw (0,0) node[lab] (v1) {$\scriptstyle 1$};
\draw (0,0.7) node[lab] (v2) {$\scriptstyle 2$};
\draw (v1) edge (rr) edge (v2);
\draw  (1,0) node { $+$};
\draw  (2,-0.7) node[root] (rrr) {};
\draw (2,0) node[lab] (vv2) {$\scriptstyle 2$};
\draw (2,0.7) node[lab] (vv1) {$\scriptstyle 1$};
\draw (vv2) edge (rrr) edge (vv1);
\draw  (3,0) node { $\Big)$};
\end{scope}
\draw  (9.5,0) node { $=$};
\end{tikzpicture}
\end{minipage}
\begin{minipage}[t]{\linewidth}
\vspace{0.5cm}
\end{minipage}
%
%
\begin{minipage}[t]{\linewidth}
\centering 
\begin{tikzpicture}
[lab/.style={draw, circle, minimum size=5, inner sep=1}, 
n/.style={draw, circle, fill, minimum size=5, inner sep=1}, scale=.7]
\draw  (0,-0.7) node[root] (rr) {};
\draw (0,0) node[lab] (v1) {$\scriptstyle 1$};
\draw (-0.5,0.7) node[lab] (v3) {$\scriptstyle 3$};
\draw (0.5,0.7) node[lab] (v2) {$\scriptstyle 2$};
\draw (v1) edge (rr) edge (v3) edge (v2);
\draw  (1.5,0) node { $-$};
\begin{scope}[shift={(2.8,0)}]
\draw  (0,-0.7) node[root] (rr) {};
\draw (0,0) node[lab] (v1) {$\scriptstyle 1$};
\draw (0,0.8) node[lab] (v2) {$\scriptstyle 2$};
\draw (0,1.6) node[lab] (v3) {$\scriptstyle 3$};
\draw (v1) edge (rr) edge (v2) (v2) edge (v3);
\end{scope}
\draw  (4,0) node { $-$};
\begin{scope}[shift={(5.5,0)}]
\draw  (0,-0.7) node[root] (rr) {};
\draw (0,0) node[lab] (v1) {$\scriptstyle 1$};
\draw (0.5,0.7) node[lab] (v3) {$\scriptstyle 3$};
\draw (-0.5,0.7) node[lab] (v2) {$\scriptstyle 2$};
\draw (v1) edge (rr) edge (v3) edge (v2);
\end{scope}
\draw  (7,0) node { $+$};
\begin{scope}[shift={(8.5,0)}]
\draw  (0,-0.7) node[root] (rr) {};
\draw (0,0) node[lab] (v2) {$\scriptstyle 2$};
\draw (-0.5,0.7) node[lab] (v3) {$\scriptstyle 3$};
\draw (0.5,0.7) node[lab] (v1) {$\scriptstyle 1$};
\draw (v2) edge (rr) edge (v3) edge (v1);
\end{scope}
\draw  (10,0) node { $-$};
\begin{scope}[shift={(11,0)}]
\draw  (0,-0.7) node[root] (rr) {};
\draw (0,0) node[lab] (v2) {$\scriptstyle 2$};
\draw (0,0.8) node[lab] (v1) {$\scriptstyle 1$};
\draw (0,1.6) node[lab] (v3) {$\scriptstyle 3$};
\draw (v2) edge (rr) edge (v1) (v1) edge (v3);
\end{scope}
\draw  (12,0) node { $-$};
\begin{scope}[shift={(13,0)}]
\draw  (0,-0.7) node[root] (rr) {};
\draw (0,0) node[lab] (v2) {$\scriptstyle 2$};
\draw (-0.5,0.7) node[lab] (v1) {$\scriptstyle 1$};
\draw (0.5,0.7) node[lab] (v3) {$\scriptstyle 3$};
\draw (v2) edge (rr) edge (v3) edge (v1);
\end{scope}
\draw  (14.3,0) node { $+$};
\begin{scope}[shift={(15.5,0)}]
\draw  (0,-0.7) node[root] (rr) {};
\draw (0,0) node[lab] (v3) {$\scriptstyle 3$};
\draw (0,0.8) node[lab] (v1) {$\scriptstyle 1$};
\draw (0,1.6) node[lab] (v2) {$\scriptstyle 2$};
\draw (v3) edge (rr) edge (v1) (v1) edge (v2);
\end{scope}
\draw  (16.5,0) node { $+$};
\begin{scope}[shift={(17.5,0)}]
\draw  (0,-0.7) node[root] (rr) {};
\draw (0,0) node[lab] (v3) {$\scriptstyle 3$};
\draw (0,0.8) node[lab] (v2) {$\scriptstyle 2$};
\draw (0,1.6) node[lab] (v1) {$\scriptstyle 1$};
\draw (v3) edge (rr) edge (v2) (v2) edge (v1);
\end{scope}
\end{tikzpicture}
\end{minipage}
\caption{Computation of the vector $T_{\{a_1, a_2 \}} \circ_1 T_{\{a_1, a_2 \}} \in \Br(3)$} \label{fig:Jac}
\end{figure}

A simple computation shows that 
\begin{equation}
\label{de-T-1-2}
\de  (T_{1\mbox{-}2}) = T_{\cup} - T_{\cup}^{opp}. 
\end{equation}
Hence
\begin{equation}
\label{Ta1a2-cup}
T_{a_1 a_2} = T_{\cup} - \frac{1}{2} \de (T_{1\mbox{-}2}).
\end{equation}

On the other hand, 
$$
\begin{tikzpicture}
[lab/.style={draw, circle, minimum size=5, inner sep=1}, 
n/.style={draw, circle, fill, minimum size=5, inner sep=1}, scale=.7]
\draw  (-1.5,0) node { $\de$};
\draw  (0,-0.7) node[root] (rr) {};
\draw (0,0) node[n] (n) {};
\draw (-0.7,0.7) node[lab] (v1) {$\scriptstyle 1$};
\draw (0,0.7) node[lab] (v2) {$\scriptstyle 2$};
\draw (0.7,0.7) node[lab] (v3) {$\scriptstyle 3$};
\draw (n) edge (rr) edge (v1) edge (v2) edge (v3);
\draw  (4.5,0) node { $= \quad T_{\cup} \circ_1 T_{\cup} ~ - ~ T_{\cup} \circ_2 T_{\cup} $};
\end{tikzpicture}
$$
Therefore, the vector 
$$
T_{a_1 a_2} \circ_1 T_{a_1 a_2} - T_{a_1 a_2} \circ_2 T_{a_1 a_2} 
$$
indeed belongs to $\textrm{Im}(\de)$, i.e. \eqref{assoc-homot} holds. 

To prove \eqref{Leibniz-homot}, we denote by $T_{1\mbox{-}(2,3)}$ the 
following brace tree:
$$
\begin{tikzpicture}
[lab/.style={draw, circle, minimum size=5, inner sep=1}, 
n/.style={draw, circle, fill, minimum size=5, inner sep=1}, scale=.7]
\draw  (-3,0) node { $T_{1\mbox{-}(2,3)} ~~:\,=$};
\draw  (0,-0.7) node[root] (rr) {};
\draw (0,0) node[lab] (v1) {$\scriptstyle 1$};
\draw (-0.5,0.7) node[lab] (v2) {$\scriptstyle 2$};
\draw (0.5,0.7) node[lab] (v3) {$\scriptstyle 3$};
\draw (v1) edge (rr) edge (v2) edge (v3);
\end{tikzpicture}
$$

We compute the differential $\de (T_{1\mbox{-}(2,3)})$ in figure \ref{fig:diff-T1-23}
%
%
\begin{figure}[htp] 
\centering
\begin{tikzpicture}
[lab/.style={draw, circle, minimum size=5, inner sep=1}, 
n/.style={draw, circle, fill, minimum size=5, inner sep=1}, scale=.7]
\draw  (-3,0) node { $\de (T_{1\mbox{-}(2,3)}) ~ =$};
\draw  (0,-0.7) node[root] (rr) {};
\draw (0,0) node[lab] (v1) {$\scriptstyle 1$};
\draw (0,0.8) node[n] (n) {};
\draw (-0.5,1.5) node[lab] (v2) {$\scriptstyle 2$};
\draw (0.5,1.5) node[lab] (v3) {$\scriptstyle 3$};
\draw (v1) edge (rr) edge (n) (n) edge (v2)  edge (v3);
\draw  (1.5,0) node { $+$};
\begin{scope}[shift={(3,0)}]
\draw  (0,-0.7) node[root] (rr) {};
\draw (0,0) node[n] (n) {};
\draw (-0.7,0.7) node[lab] (v1) {$\scriptstyle 1$};
\draw (0,0.7) node[lab] (v2) {$\scriptstyle 2$};
\draw (0.7,0.7) node[lab] (v3) {$\scriptstyle 3$};
\draw (n) edge (rr) edge (v1) edge (v2)  edge (v3);
\end{scope}
\draw  (4.5,0) node { $+$};
\begin{scope}[shift={(6,0)}]
\draw  (0,-0.7) node[root] (rr) {};
\draw (0,0) node[n] (n) {};
\draw (-0.5,0.7) node[lab] (v1) {$\scriptstyle 1$};
\draw (0.5,0.7) node[lab] (v3) {$\scriptstyle 3$};
\draw (-0.5,1.5) node[lab] (v2) {$\scriptstyle 2$};
\draw (n) edge (rr) edge (v1) edge (v3) (v1) edge (v2);
\end{scope}
\draw  (7.5,0) node { $-$};
\begin{scope}[shift={(9,0)}]
\draw  (0,-0.7) node[root] (rr) {};
\draw (0,0) node[n] (n) {};
\draw (-0.7,0.7) node[lab] (v2) {$\scriptstyle 2$};
\draw (0,0.7) node[lab] (v1) {$\scriptstyle 1$};
\draw (0.7,0.7) node[lab] (v3) {$\scriptstyle 3$};
\draw (n) edge (rr) edge (v1) edge (v2)  edge (v3);
\end{scope}
\draw  (10.5,0) node { $-$};
\begin{scope}[shift={(12,0)}]
\draw  (0,-0.7) node[root] (rr) {};
\draw (0,0) node[n] (n) {};
\draw (-0.5,0.7) node[lab] (v2) {$\scriptstyle 2$};
\draw (0.5,0.7) node[lab] (v1) {$\scriptstyle 1$};
\draw (0.5,1.5) node[lab] (v3) {$\scriptstyle 3$};
\draw (n) edge (rr) edge (v1) edge (v2) (v1) edge (v3);
\end{scope}
\draw  (13.5,0) node { $+$};
\begin{scope}[shift={(15,0)}]
\draw  (0,-0.7) node[root] (rr) {};
\draw (0,0) node[n] (n) {};
\draw (-0.7,0.7) node[lab] (v2) {$\scriptstyle 2$};
\draw (0,0.7) node[lab] (v3) {$\scriptstyle 3$};
\draw (0.7,0.7) node[lab] (v1) {$\scriptstyle 1$};
\draw (n) edge (rr) edge (v1) edge (v2)  edge (v3);
\end{scope}
\end{tikzpicture}
\caption{Computation of the differential $\de(T_{1\mbox{-}(2,3)})$} \label{fig:diff-T1-23}
\end{figure}

The insertions $T_{\{a_1, a_2\}} \circ_2 T_{\cup}$ and $T_{\cup} \circ_1 T_{\{a_1, a_2\}}$ 
are computed in figures \ref{fig:brack-circ2-cup} and \ref{fig:cup-circ1-brack}, respectively, 
and the vector $(1,2) \big( T_{\cup} \circ_2  T_{\{a_1, a_2\}} \big) $ is shown in figure
\ref{fig:cup-circ2-brack}. 

%
%
\begin{figure}[htp] 
\centering
\begin{tikzpicture}
[lab/.style={draw, circle, minimum size=5, inner sep=1}, 
n/.style={draw, circle, fill, minimum size=5, inner sep=1}, scale=.7]
\draw  (-3,0) node { $T_{\{a_1, a_2\}} \circ_2 T_{\cup} ~~ =$};
\draw  (0,-0.7) node[root] (rr) {};
\draw (0,0) node[lab] (v1) {$\scriptstyle 1$};
\draw (0,0.8) node[n] (n) {};
\draw (-0.5,1.5) node[lab] (v2) {$\scriptstyle 2$};
\draw (0.5,1.5) node[lab] (v3) {$\scriptstyle 3$};
\draw (v1) edge (rr) edge (n) (n) edge (v2)  edge (v3);
\draw  (1.5,0) node { $+$};
\begin{scope}[shift={(3,0)}]
\draw  (0,-0.7) node[root] (rr) {};
\draw (0,0) node[n] (n) {};
\draw (-0.7,0.7) node[lab] (v1) {$\scriptstyle 1$};
\draw (0,0.7) node[lab] (v2) {$\scriptstyle 2$};
\draw (0.7,0.7) node[lab] (v3) {$\scriptstyle 3$};
\draw (n) edge (rr) edge (v1) edge (v2)  edge (v3);
\end{scope}
\draw  (4.5,0) node { $-$};
\begin{scope}[shift={(6,0)}]
\draw  (0,-0.7) node[root] (rr) {};
\draw (0,0) node[n] (n) {};
\draw (-0.5,0.7) node[lab] (v2) {$\scriptstyle 2$};
\draw (0.5,0.7) node[lab] (v3) {$\scriptstyle 3$};
\draw (-0.5,1.5) node[lab] (v1) {$\scriptstyle 1$};
\draw (n) edge (rr) edge (v2)  edge (v3) (v2) edge (v1);
\end{scope}
\draw  (7.5,0) node { $-$};
\begin{scope}[shift={(9,0)}]
\draw  (0,-0.7) node[root] (rr) {};
\draw (0,0) node[n] (n) {};
\draw (-0.7,0.7) node[lab] (v2) {$\scriptstyle 2$};
\draw (0,0.7) node[lab] (v1) {$\scriptstyle 1$};
\draw (0.7,0.7) node[lab] (v3) {$\scriptstyle 3$};
\draw (n) edge (rr) edge (v1) edge (v2)  edge (v3);
\end{scope}
\draw  (10.5,0) node { $+$};
\begin{scope}[shift={(12,0)}]
\draw  (0,-0.7) node[root] (rr) {};
\draw (0,0) node[n] (n) {};
\draw (-0.5,0.7) node[lab] (v2) {$\scriptstyle 2$};
\draw (0.5,0.7) node[lab] (v3) {$\scriptstyle 3$};
\draw (0.5,1.5) node[lab] (v1) {$\scriptstyle 1$};
\draw (n) edge (rr) edge (v2)  edge (v3) (v3) edge (v1);
\end{scope}
\draw  (13.5,0) node { $+$};
\begin{scope}[shift={(15,0)}]
\draw  (0,-0.7) node[root] (rr) {};
\draw (0,0) node[n] (n) {};
\draw (-0.7,0.7) node[lab] (v2) {$\scriptstyle 2$};
\draw (0,0.7) node[lab] (v3) {$\scriptstyle 3$};
\draw (0.7,0.7) node[lab] (v1) {$\scriptstyle 1$};
\draw (n) edge (rr) edge (v1) edge (v2)  edge (v3);
\end{scope}
\end{tikzpicture}
\caption{Computation of the insertion $T_{\{a_1, a_2\}} \circ_2 T_{\cup}$ } \label{fig:brack-circ2-cup}
\end{figure}
%
%
\begin{figure}[htp] 
\centering
\begin{tikzpicture}
[lab/.style={draw, circle, minimum size=5, inner sep=1}, 
n/.style={draw, circle, fill, minimum size=5, inner sep=1}, scale=.7]
\draw  (-3.5,0) node { $T_{\cup}  \circ_1 T_{\{a_1, a_2\}} ~~~ = ~~~ -$};
\draw  (0,-0.7) node[root] (rr) {};
\draw (0,0) node[n] (n) {};
\draw (-0.5,0.7) node[lab] (v1) {$\scriptstyle 1$};
\draw (0.5,0.7) node[lab] (v3) {$\scriptstyle 3$};
\draw (-0.5,1.5) node[lab] (v2) {$\scriptstyle 2$};
\draw (n) edge (rr) edge (v1) edge (v3) (v1)  edge (v2);
\draw  (1.5,0) node { $-$};
\begin{scope}[shift={(3,0)}]
\draw  (0,-0.7) node[root] (rr) {};
\draw (0,0) node[n] (n) {};
\draw (-0.5,0.7) node[lab] (v2) {$\scriptstyle 2$};
\draw (0.5,0.7) node[lab] (v3) {$\scriptstyle 3$};
\draw (-0.5,1.5) node[lab] (v1) {$\scriptstyle 1$};
\draw (n) edge (rr) edge (v2) edge (v3) (v2)  edge (v1);
\end{scope}
\end{tikzpicture}
\caption{Computation of the insertion $T_{\cup}  \circ_1 T_{\{a_1, a_2\}}$ } \label{fig:cup-circ1-brack}
\end{figure}
%
%
\begin{figure}[htp] 
\centering
\begin{tikzpicture}
[lab/.style={draw, circle, minimum size=5, inner sep=1}, 
n/.style={draw, circle, fill, minimum size=5, inner sep=1}, scale=.7]
\draw  (-4,0) node { $(1,2) \big( T_{\cup}  \circ_2 T_{\{a_1, a_2\} } \big) ~~ =$};
\draw  (0,-0.7) node[root] (rr) {};
\draw (0,0) node[n] (n) {};
\draw (-0.5,0.7) node[lab] (v2) {$\scriptstyle 2$};
\draw (0.5,0.7) node[lab] (v1) {$\scriptstyle 1$};
\draw (0.5,1.5) node[lab] (v3) {$\scriptstyle 3$};
\draw (n) edge (rr) edge (v1) edge (v2) (v1)  edge (v3);
\draw  (1.5,0) node { $+$};
\begin{scope}[shift={(3,0)}]
\draw  (0,-0.7) node[root] (rr) {};
\draw (0,0) node[n] (n) {};
\draw (-0.5,0.7) node[lab] (v2) {$\scriptstyle 2$};
\draw (0.5,0.7) node[lab] (v3) {$\scriptstyle 3$};
\draw (0.5,1.5) node[lab] (v1) {$\scriptstyle 1$};
\draw (n) edge (rr) edge (v2) edge (v3) (v3)  edge (v1);
\end{scope}
\end{tikzpicture}
\caption{The vector $(1,2) \big( T_{\cup}  \circ_2 T_{\{a_1, a_2\} } \big) $ } \label{fig:cup-circ2-brack}
\end{figure}

Adding all these expressions and performing obvious cancelations, we conclude that 
\begin{equation}
\label{Leibniz-cup}
T_{\{a_1, a_2\}} \circ_2 T_{\cup} - T_{\cup}  \circ_1 T_{\{a_1, a_2\}} - (1,2) \big( T_{\cup}  \circ_2 T_{\{a_1, a_2\} } \big)  = \de(T_{1\mbox{-}(2,3)}). 
\end{equation}
Finally, combining \eqref{Ta1a2-cup} with \eqref{Leibniz-cup}, we deduce \eqref{Leibniz-homot}. 

Claim \ref{cl:Ger-relations} is proved. 
\end{proof}

\section{The spectral sequence for $(V_\bullet(n), \de_0)$ degenerates at the second page}
\label{app:E2-Einfty}

Let us study in a bit more detail the dual of the map $\mj : \La\Lie\to\Br$. 
In arity $n$ the dual map can be realized as a composition 
\[
 \Br^*(n) \to \cT^*(n) \to \La^{-1} \coAs(n)\to \La^{-1}\coLie(n)
\]
where we use the following objects and morphisms:
\begin{itemize}
\item $\cT(n) \subset \Br(n)$ is the graded subspace of trees without neutral vertices. 
 (In fact, the $\cT(n)$ assemble to form an operad whose twist is essentially $\Br$, cf. \cite{DW}.)
 \item The map $\Br^*(n) \to \cT^*(n)$ is the natural projection. (Concretely, it sends graphs with neutral vertices to zero.)
 \item The map $\La^{-1} \coAs(n)\to \La^{-1}\coLie(n)$ is the natural projection arising from the inclusion $\Lie\to\As$.
 Note that we may identify $\La^{-1} \coAs(n)$ (up to a degree shift) with the subspace of the space of words
 \[
  \bbK\langle X_1,\dots,X_n\rangle
 \]
in formal odd variables, each appearing exactly once. 
The space $\bbK\langle X_1,\dots,X_n\rangle$ is a $\mathbb{Z}^n$ graded augmented commutative algebra with the shuffle product $\bullet_{sh}$ and unit the empty word.
We denote by $A_n\subset \bbK\langle X_1,\dots,X_n\rangle$ the augmentation ideal.
The space $\La^{-1}\coLie(n)$ may then be identified with the degree $(1,\dots,1)$-subspace of the quotient
\[
 A_n / (A_n \bullet_{sh} A_n).
\]
In this language, $\La^{-1} \coAs(n)\to \La^{-1}\coLie(n)$ is just the map induced on the degree $(1,\dots,1)$-subspaces of the obvious projection 
\[
 A_n \to A_n / (A_n \bullet_{sh} A_n).
\]
\item The map $f: \cT^*(n) \to \La^{-1} \coAs(n)\cong A_n^{(1,\dots,1)}$ can be defined recursively as follows.
If $n=1$ and $T\in \cT^*(1)$ is the unique tree with one vertex labelled 1, we set 
\[
 f(T) = X_1.
\]
If $n>1$ and $T\in \cT^*(n)$ is the tree with lowest vertex $j$, having children (in this order) $T_1,\dots, T_k$, we set recursively
\[
 f(T) = X_j(f(T_1)\bullet_{sh}\cdots \bullet_{sh} f(T_k)).
\]
\end{itemize}
For example, if $\la \in S_n$ and $T^n_{\la}$ is the brace tree shown in figure \ref{fig:T-la}, then 
$$
f(T^n_{\la}) = X_{\la(1)}  X_{\la(2)} \dots  X_{\la(n)}\,.
$$
Furthermore, if 
$$
\begin{tikzpicture}
\draw  (-1.8,0) node {$T~=$};
\node[root] (r) at (0, -0.5) {};
\draw (0,0) node[lab] (v1) { $\scriptstyle 1$};
\draw (-0.5,0.6) node[lab] (v2) { $\scriptstyle 2$};
\draw (-0.5,1.2) node[lab] (v3) { $\scriptstyle 3$};
\draw (0.5,0.6) node[lab] (v4) { $\scriptstyle 4$};
\draw (v1) edge (v2) edge (v4) edge (r) (v2) edge (v3);
\end{tikzpicture} 
$$
then 
$$
f(T) = X_1 \big( (X_2 X_3) \bullet_{sh} X_4\big) = X_1 (X_2 X_3 X_4  - X_2 X_4 X_3 + X_4 X_2 X_3). 
$$

The composition $g:\Br^*(n) \to \cT^*(n) \stackrel{f}{\to} \La^{-1} \coAs(n)$ appearing above is of interest in its own right.
It is does not commute with the differential, i.e., $g\circ \delta^*\neq 0$. However, we claim that 
\begin{lemma}
\label{lem:gcircdelta}
For every brace tree $T$
$$
g\circ \delta_0^* (T) = 0.
$$
\end{lemma}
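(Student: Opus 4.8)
The plan is to exploit the factorization $g = f \circ \pi$, where $\pi : \Br^*(n) \to \cT^*(n)$ is the projection killing every brace tree that has a neutral vertex. Since each elementary contraction $\delta^*_e$ decreases the number of neutral vertices by exactly one, a brace tree occurring in $\delta_0^*(T)$ can be neutral-vertex-free only if $T$ itself carries exactly one neutral vertex. This at once disposes of three cases. If $T$ has no neutral vertex, then $\delta^*(T)=0$. If $T$ has at least two neutral vertices, then every summand of $\delta_0^*(T)$ still has a neutral vertex and is annihilated by $\pi$. And if $T$ has a single neutral vertex $v$ which is the lowest non-root vertex, then every contraction at an edge of $v$ produces a tree whose lowest vertex is labeled, so that $\delta^*(T)$ lies in $V_\circ^*(n)$, i.e. $\delta^*(T)=\delta_1^*(T)$ and $\delta_0^*(T)=0$. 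In all three cases $g\circ\delta_0^*(T)=0$, and we are reduced to the single genuine case: $T$ has exactly one neutral vertex $v$, lying strictly above the lowest (labeled) vertex, in which case $\delta_0^*(T)=\delta^*(T)$ and we must prove $f(\delta^*(T))=0$.

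First I would set up an induction on the number of vertices, organized by the recursive definition of $f$. Write $j$ for the lowest labeled vertex of $T$ and $T_1,\dots,T_m$ for the subtrees grown on the children of $j$, so that $f(T)=X_j\big(f(T_1)\bullet_{sh}\cdots\bullet_{sh} f(T_m)\big)$. The neutral vertex $v$ lies in a unique subtree $T_a$. If $v$ is not the root of $T_a$, then all edges contracted by $\delta^*$ lie inside $T_a$, so $\delta^*(T)$ is, up to an overall sign, the tree $T$ with $T_a$ replaced by $\delta^*(T_a)$; since $f$ is built by shuffling the subtree contributions, $f(\delta^*(T))$ equals, up to sign, $X_j(\cdots\bullet_{sh} f(\delta^*(T_a))\bullet_{sh}\cdots)$, which vanishes by the inductive hypothesis applied to the smaller tree $T_a$ (for which $\delta_0^*=\delta^*$ as well).

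This reduces everything to the base situation in which $v$ is a child of $j$. Here $\mathrm{Edges}_\bullet(T)$ consists of the edge $e_p$ joining $j$ to $v$ together with the edges $e_1,\dots,e_k$ joining $v$ to its children $w_1,\dots,w_k$ (all labeled, roots of subtrees $W_1,\dots,W_k$). Contracting $e_p$ yields the tree $T^{(p)}$ in which $w_1,\dots,w_k$ become children of $j$, while contracting $e_i$ yields the tree $T^{(i)}$ in which $w_i$ is promoted to a child of $j$ and absorbs the remaining $w_m$ ($m\neq i$) alongside its own children. Applying $f$, the contribution of the modified subtree to $f(T^{(p)})$ is the signed shuffle $f(W_1)\bullet_{sh}\cdots\bullet_{sh} f(W_k)$, whereas the contribution to $f(T^{(i)})$ is exactly the term obtained by peeling the odd letter $X_{w_i}$ to the front. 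The standard recursive (\emph{first-letter}) identity for the signed shuffle product then expresses $f(W_1)\bullet_{sh}\cdots\bullet_{sh} f(W_k)$ as an alternating sum of these peeled terms, and hence expresses $f(T^{(p)})$ as an alternating sum of the $f(T^{(i)})$.

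The main obstacle, and the only point requiring care, is matching this algebraic identity with the signs that $\delta^*$ attaches to the contractions $\delta^*_{e_p},\delta^*_{e_1},\dots,\delta^*_{e_k}$. Using the total order on the non-root edges of $T$ and the anti-commuting-edge sign rule from Subsection \ref{sec:Br-dual} (of which $\delta^*(T_\cup)=T_{1\mbox{-}2}-T_{2\mbox{-}1}$ is the simplest instance), I would check that the sign of $\delta^*_{e_i}$ relative to $\delta^*_{e_p}$ is precisely the sign borne by the $i$-th peeled term in the first-letter expansion of the signed shuffle. Granting this bookkeeping, $f(\delta^*(T))$ is exactly the signed-shuffle identity above written as a vanishing linear combination, so $f(\delta^*(T))=0$, which settles the base case and completes the lemma.
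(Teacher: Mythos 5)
Your proposal is correct and takes essentially the same approach as the paper: the same reduction to brace trees with exactly one neutral vertex lying strictly above the lowest labeled vertex, followed by the same local computation in which the signed sum of edge contractions around that neutral vertex is annihilated by $f$ via the first-letter (deconcatenation) expansion of the shuffle product. Your explicit induction on the number of vertices merely formalizes what the paper expresses by saying the vanishing combination \eqref{long-sum-shuff} occurs ``as a factor'' of $g\circ\delta_0^*(T)$, and your sign bookkeeping is deferred at the same level of informality as in the paper's own proof.
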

\begin{proof}
It is clear that we should only consider $g\circ \delta_0^*(T)$ for a brace tree $T$ with 
exactly one neutral vertex which is not in the lowest possible position. 

Up to an overall sign factor, the differential $\de_0^*$ turns the branch 
$$
\begin{tikzpicture}
\draw (0,0) node[lab] (vi) { $\scriptstyle ~i~$};
\draw (0,0.8) node[n] (n) {$\scriptstyle n$};
\draw (-2,1.8) node[lab] (vj1) { $\scriptstyle j_{1}$};
\draw (-0.5,1.8) node[lab] (vj2) { $\scriptstyle j_{2}$};
\draw  (0.5,1.8) node {$\dots$};
\draw (1.5,1.8) node[lab] (vjq) { $\scriptstyle j_{q}$};
\draw  (vi) edge (0,-0.5) (n) edge (vi) edge (vj1) edge (vj2) edge (vjq);
\draw  (vj1) edge (-2.5,2.5) edge (-1.5, 2.5); \draw  (-2, 2.4) node {$\dots$};
\draw  (vj2) edge (-1,2.5) edge (0, 2.5); \draw  (-0.5, 2.4) node {$\dots$};
\draw  (vjq) edge (1,2.5) edge (2, 2.5); \draw  (1.5, 2.4) node {$\dots$};
\end{tikzpicture} 
$$
into the linear combination
$$
\begin{tikzpicture}
\begin{scope}[shift={(1,0)}]
\draw (0,0.3) node[lab] (vi) { $\scriptstyle ~i~$};
\draw (-2,1.3) node[lab] (vj1) { $\scriptstyle j_{1}$};
\draw (-0.5,1.3) node[lab] (vj2) { $\scriptstyle j_{2}$};
\draw  (0.5,1.3) node {$\dots$};
\draw (1.5,1.3) node[lab] (vjq) { $\scriptstyle j_{q}$};
\draw  (vi) edge (0,-0.5) edge (vj1) edge (vj2) edge (vjq);
\draw  (vj1) edge (-2.5,2) edge (-1.5, 2); \draw  (-2, 1.9) node {$\dots$};
\draw  (vj2) edge (-1,2) edge (0, 2); \draw  (-0.5, 1.9) node {$\dots$};
\draw  (vjq) edge (1,2) edge (2, 2); \draw  (1.5, 1.9) node {$\dots$};
\draw  (2.5,0.5) node {$ - $};
\begin{scope}[shift={(5,0)}]
\draw (0,0) node[lab] (vi) { $\scriptstyle ~i~$};
\draw (0,0.8) node[lab] (vj1) {$\scriptstyle j_1$};
\draw (-0.5,1.8) node[lab] (vj2) { $\scriptstyle j_{2}$};
\draw  (0.5,1.8) node {$\dots$};
\draw (1.5,1.8) node[lab] (vjq) { $\scriptstyle j_{q}$};
\draw  (vi) edge (0,-0.5) (vj1) edge (vi) edge (vj2) edge (vjq);
\draw  (vj1) edge (-2, 1.5) edge (-1, 1.5); \draw  (-1.2, 1.4) node {$\dots$};
\draw  (vj2) edge (-1,2.5) edge (0, 2.5); \draw  (-0.5, 2.4) node {$\dots$};
\draw  (vjq) edge (1,2.5) edge (2, 2.5); \draw  (1.5, 2.4) node {$\dots$};
\end{scope}
\end{scope}
\begin{scope}[shift={(0,-4)}]
\draw  (-3,0.5) node {$ -~(-1)^{d_1}$};
\draw (0,0) node[lab] (vi) { $\scriptstyle ~i~$};
\draw (0,0.8) node[lab] (vj2) {$\scriptstyle j_2$};

\draw (-2,1.8) node[lab] (vj1) { $\scriptstyle j_{1}$};

\draw  (1.2,1.8) node {$\dots$};
\draw (2,1.8) node[lab] (vjq) { $\scriptstyle j_{q}$};
\draw (0.5,1.8) node[lab] (vj3) { $\scriptstyle j_{3}$};
\draw  (vi) edge (0,-0.5) (vj2) edge (vi) edge (vj1) edge (vj3) edge (vjq);
\draw  (vj1) edge (-2.5,2.5) edge (-1.5, 2.5); \draw  (-2, 2.4) node {$\dots$};
\draw  (vj2) edge (-0.8,1.5) edge (0, 1.5); \draw  (-0.3, 1.4) node {$\dots$};
\draw  (vj3) edge (0,2.5) edge (1, 2.5); \draw  (0.5, 2.4) node {$\dots$};
\draw  (vjq) edge (1.5,2.5) edge (2.5, 2.5); \draw  (2, 2.4) node {$\dots$};
\end{scope}
\begin{scope}[shift={(9,-4)}]
\draw  (-6,0.3) node {$\dots$};
\draw  (-3.5,0.5) node {$ -~(-1)^{d_1 + \dots + d_{q-1}}$};
\draw (0,0) node[lab] (vi) { $\scriptstyle ~i~$};
\draw (0,0.8) node[lab] (vjq) {$\scriptstyle j_q$};
\draw (-2,1.8) node[lab] (vj1) { $\scriptstyle j_{1}$};
\draw  (-1,1.8) node {$\dots$};
\draw (0,1.8) node[lab] (vj1q) { $\scriptstyle j_{q-1}$};
\draw  (vi) edge (0,-0.5) (vjq) edge (vi) edge (vj1) edge (vj1q);
\draw  (vj1) edge (-2.5,2.5) edge (-1.5, 2.5); \draw  (-2, 2.4) node {$\dots$};
\draw  (vj1q) edge (-0.5,2.5) edge (0.5, 2.5); \draw  (0, 2.4) node {$\dots$};
\draw  (vjq) edge (1,1.5) edge (2, 1.5); \draw  (1.3, 1.4) node {$\dots$};
\end{scope}
\end{tikzpicture} 
$$
where $d_k$ is the degree of the brach which originates from the neutral vertex 
and contains vertex $j_k$.  

Therefore $g\circ \delta_0^*(T)$ contains this expression 
\begin{equation}
\label{long-sum-shuff}
X_i \big( f_{j_1} \bullet_{sh} f_{j_2} \bullet_{sh} \dots  \bullet_{sh} f_{j_q} \big) -
X_i X_{j_1} \big( h_{j_1} \bullet_{sh}  f_{j_2} \bullet_{sh} \dots  \bullet_{sh} f_{j_q}  \big)
\end{equation}
$$
-(-1)^{d_1} X_i X_{j_2} \big( f_{j_1} \bullet_{sh} h_{j_2} \bullet_{sh} 
f_{j_3} \bullet_{sh} \dots \bullet_{sh} f_{j_q} \big) - \dots
$$
$$
-(-1)^{d_1+ d_2 + \dots + d_{q-1}} X_i X_{j_q} \big( f_{j_1} \bullet_{sh} \dots \bullet_{sh} f_{j_{q-1}} \bullet_{sh} h_{j_q}  \big)
$$
as a factor. Here $f_{j_k}$ is the value of $f$ on the brach which originates at the neutral vertex 
and contains vertex $j_k$, while
$$
h_{j_k} = f(b_{j_k 1}) \bullet_{sh} f (b_{j_k 2}) \bullet_{sh} \dots \bullet_{sh} f(b_{j_k r_k}), 
$$
where $b_{j_k t}$ is the $t$-th brach which originates from vertex $j_k$. 

Using the definition of the shuffle product, it is easy to see that 
the expression \eqref{long-sum-shuff} is zero. 

Thus the lemma follows. 
\end{proof}

\begin{remark}
\label{rem:gshuffle}
Let us observe that the map $g\circ \delta_1^*$ has the following nice combinatorial description:
If $T\in \Br^*(n)$ is a brace tree, then $g\circ \delta_1^*(T)=0$ unless $T$ has exactly 
one neutral vertex, which is the lowest vertex.
In this case 
$$
g \circ \delta_1^*(T) = f(T_1)\bullet_{sh}\cdots \bullet_{sh}f(T_k),
$$
where $T_1, \dots,T_k$ are the branches which originate at the neutral vertex. 

On the other hand, Lemma \ref{lem:gcircdelta} implies that 
$g\circ \delta^*= g\circ \delta_1^*$. Thus  $g\circ \delta^*(T)=0$ unless $T$ has exactly 
one neutral vertex, which is the lowest vertex and, in this case, 
\begin{equation}
\label{g-delta-star}
g \circ \delta^*(T) = f(T_1)\bullet_{sh}\cdots \bullet_{sh}f(T_k).
\end{equation}
\end{remark}

Let us now consider the dual cochain complex
$$
(V_\bullet(n)^* , \de^*_0)
$$ 
and construct a set of vectors in the top degree $n-2$ which 
will play an important role. 

Let $k$ be an integer $\ge 2$ and $(r_1, r_2, \dots, r_k)$ be a tuple 
of positive integers such that $r_1 + r_2 + \dots + r_k  = n$. 
For every such tuple, we consider a brace trees 
$T^{\si}_{r_1, \dots, r_k}$ shown in figure \ref{fig:fork}, 
where $\si$ is a permutation in $S_n$
\begin{equation}
\label{si-nice-order}
\si =
\left(
\begin{array}{cccccccccccc}
1 & 2  & \dots & r_1 & r_1 +1 & \dots & r_1+r_2 & \dots & \dots &n -r_k+1 & \dots & n   \\
i_1^1 & i^1_2  & \dots & i^1_{r_1} & i^2_1 & \dots & i^2_{r_2} & \dots & \dots &  i^k_1  & \dots & i^k_{r_k}     
\end{array}
\right)
\end{equation}
which satisfies these properties\footnote{In particular, $i^1_1$ is necessarily $1$.}
\begin{equation}
\label{sigma-pty}
i^m_1 = \min \{ i^m_1, i^m_2, \dots, i^m_{r_m} \} \quad \forall~~ m, \qquad 
\textrm{and} \qquad
i^1_1 < i^2_1 < \dots < i^k_1.  
\end{equation}

 \begin{figure}[htp] 
\centering 
\begin{tikzpicture}
\node[root] (r) at (0,0) {};
\node[n] (rr) at (0,.5) {};
\node[ext] (t1) at (-1.5,1) {$i_1^1$};
\node[] (t12) at (-1.5,2) {$\vdots$};
\node[ext] (t13) at (-1.5,3) {$i_{r_1}^1$};
\node[ext] (t2) at (1.5,1) {$i_1^k$};
\node[] (t22) at (1.5,2) {$\vdots$};
\node[ext] (t23) at (1.5,3) {$i_{r_k}^k$};
\node[] (dd) at (0,1) {$\dots$};
\draw (rr) edge (r) edge (t1) edge (t2) edge (dd)
      (t12) edge (t1) edge (t13)
      (t22) edge (t2) edge (t23); 
\end{tikzpicture}
\caption{The brace tree $T^{\si}_{r_1, \dots, r_k}$} \label{fig:fork}
\end{figure}

Moreover, we set 
\begin{equation}
\label{Y-si}
Y^{\si}_{r_1, \dots, r_k} = \frac{1}{k!} \sum_{\tau \in S_k}  \tau_* (T^{\si}_{r_1, \dots, r_k}), 
\end{equation}
where $\tau_*$ rearranges the $k$ branches of $T^{\si}_{r_1, \dots, r_k}$ originating from 
the neutral vertex with the appropriate sign factor. For example, 
$$
\begin{tikzpicture}
\draw  (-2.5,0.5) node {$Y^{\si}_{r_1, r_2} ~ = ~ \dis \frac{1}{2}$};
\node[root] (r) at (0,-0.5) {};
\draw (0,0) node[n] (n) {$~$};
\draw (-0.5,0.7) node[lab] (vi11) { $\scriptstyle i^1_1$};
\draw  (-0.5,1.7) node {$\vdots$};
\draw (-0.5,2.5) node[lab] (vi1r1) { $\scriptstyle i^1_{r_1}$};
\draw (0.5,0.7) node[lab] (vi21) { $\scriptstyle i^2_1$};
\draw  (0.5,1.7) node {$\vdots$};
\draw (0.5,2.5) node[lab] (vi2r2) { $\scriptstyle i^2_{r_2}$};
\draw (n) edge (r)  edge (vi11) edge (vi21);
\draw (vi11) edge (-0.5, 1.2)  (vi21) edge (0.5, 1.2) (vi1r1) edge (-0.5, 1.9) (vi2r2) edge (0.5, 1.9);
\begin{scope}[shift={(4.5,0)}]
\draw  (-2,0.5) node {$+ ~ \dis \frac{(-1)^{r_1 r_2}}{2}$};
\node[root] (r) at (0,-0.5) {};
\draw (0,0) node[n] (n) {$~$};
\draw (0.5,0.7) node[lab] (vi11) { $\scriptstyle i^1_1$};
\draw  (0.5,1.7) node {$\vdots$};
\draw (0.5,2.5) node[lab] (vi1r1) { $\scriptstyle i^1_{r_1}$};
\draw (-0.5,0.7) node[lab] (vi21) { $\scriptstyle i^2_1$};
\draw  (-0.5,1.7) node {$\vdots$};
\draw (-0.5,2.5) node[lab] (vi2r2) { $\scriptstyle i^2_{r_2}$};
\draw (n) edge (r)  edge (vi11) edge (vi21);
\draw (vi11) edge (0.5, 1.2)  (vi21) edge (-0.5, 1.2) (vi1r1) edge (0.5, 1.9) (vi2r2) edge (-0.5, 1.9);
\end{scope}
\end{tikzpicture}
$$

We denote by $\Xi$ the set of all such vectors $Y^{\si}_{r_1, \dots, r_k}$ for all $k \ge 2$,
all tuples $(r_1, r_2, \dots, r_k)$, $r_1 + \dots + r_k =n$, and all permutations 
$\si$ satisfying \eqref{sigma-pty}. 
Due to the theorem about the cyclic decomposition of a permutation, it is clear 
that $\Xi$ has 
$$
n! - (n-1)!
$$ 
elements. Moreover, the subset $\Xi \subset V_\bullet(n)^*$ is linearly independent.  

Since every vector $Y^{\si}_{r_1, \dots, r_k}$ is in the top degree of 
$(V_\bullet(n)^* , \de^*_0)$, it is automatically a cocycle in this complex.  

Let us prove that 
\begin{claim}
\label{cl:forks-are-good}
Every non-trivial linear combination of vectors in $\Xi$ is a non-trivial 
cocycle in $(V_\bullet(n)^* , \de^*_0)$. 
\end{claim}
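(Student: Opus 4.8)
The plan is to detect the cohomology classes of the vectors in $\Xi$ by means of the map $g$ introduced earlier in this appendix, more precisely by the composite $g\circ\de_1^*$, and thereby to reduce the statement to a linear independence assertion about shuffle products of Lyndon words. The key point is that $g\circ\de_1^*$ descends to a well-defined map on the cohomology $H^\bullet(V_\bullet(n)^*,\de_0^*)$.

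First I would record the triangularity of the differential. Since $\de$ carries $V_\circ(n)$ into $V_\circ(n)\oplus V_\bullet(n)$ but carries $V_\bullet(n)$ into $V_\bullet(n)$, the dual differential $\de^*$ is "lower triangular": $\de_1^*$ maps $V_\bullet(n)^*\to V_\circ(n)^*$ and there is no component $V_\circ(n)^*\to V_\bullet(n)^*$. Expanding $(\de_0^*+\de_1^*)^2=0$ on $V_\bullet(n)^*$ and isolating the $V_\circ(n)^*$-component yields $\de_1^*\de_0^*=-\de_0^*\de_1^*$. Combined with Lemma \ref{lem:gcircdelta}, which gives $g\circ\de_0^*=0$, this shows for every $w\in V_\bullet(n)^*$ that $g\circ\de_1^*(\de_0^*w)=-g\bigl(\de_0^*(\de_1^*w)\bigr)=0$; hence $g\circ\de_1^*$ annihilates every $\de_0^*$-coboundary.

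Next I would compute $g\circ\de_1^*$ on the generators. Each fork $T^{\si}_{r_1,\dots,r_k}$ has a single neutral vertex at the lowest position with string branches $w_m:=X_{i^m_1}X_{i^m_2}\cdots X_{i^m_{r_m}}$, so Remark \ref{rem:gshuffle} (formula \eqref{g-delta-star}) gives $g\circ\de_1^*(T^{\si}_{r_1,\dots,r_k})=w_1\bullet_{sh}\cdots\bullet_{sh}w_k$. Because the shuffle product is graded commutative and the reordering signs in the symmetrization \eqref{Y-si} are exactly the Koszul signs produced by permuting the branches, every term contributes the same word, so $g\circ\de_1^*(Y^{\si}_{r_1,\dots,r_k})$ equals $w_1\bullet_{sh}\cdots\bullet_{sh}w_k$ up to a nonzero scalar. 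I would then note that the defining conditions \eqref{sigma-pty}—each block begins with its minimal letter and the block-minima increase—say precisely that each $w_m$ is a Lyndon word (a multilinear word starting with its minimum) and that the underlying letter sets partition $\{1,\dots,n\}$.

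It then remains to prove that the products $w_1\bullet_{sh}\cdots\bullet_{sh}w_k$ with $k\ge 2$ attached to the elements of $\Xi$ are linearly independent in $\La^{-1}\coAs(n)\cong A_n^{(1,\dots,1)}$: given this, a coboundary relation $\sum c_\al Y_\al=\de_0^*w$ maps under $g\circ\de_1^*$ to $\sum c_\al\,(w_1\bullet_{sh}\cdots\bullet_{sh}w_k)=0$, forcing all $c_\al=0$. This linear independence is the main obstacle, and it is exactly the freeness of the shuffle algebra on Lyndon words: the products of Lyndon words over a partition of $\{1,\dots,n\}$ form a PBW-type basis of $A_n^{(1,\dots,1)}$, the single-block ($k=1$) ones spanning a complement to $A_n\bullet_{sh}A_n$ and corresponding to $\La^{-1}\coLie(n)$ via \eqref{coLie-V}, while the $k\ge 2$ ones—of which there are exactly $n!-(n-1)!=|\Xi|$—span the decomposables $(A_n\bullet_{sh}A_n)^{(1,\dots,1)}$. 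I would establish it either by citing Radford's theorem or, to keep the argument self-contained, by a triangularity argument: ordering words lexicographically, the Chen--Fox--Lyndon factorization lets one recover the block decomposition from a distinguished (lexicographically extremal) word that occurs with coefficient $\pm 1$, so distinct data yield distinct leading words and independence follows. This proves Claim \ref{cl:forks-are-good}.
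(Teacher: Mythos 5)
Your proposal is correct, and its skeleton coincides with the paper's: both arguments detect the classes of $\Xi$ by the map $g$, both use Lemma \ref{lem:gcircdelta} together with the anticommutation $\de_0^*\de_1^*=-\de_1^*\de_0^*$ to see that the detector annihilates $\de_0^*$-coboundaries, and both use Remark \ref{rem:gshuffle} plus the graded commutativity of the shuffle product to compute that $Y^{\si}_{r_1,\dots,r_k}$ goes to the shuffle product of the branch words $X_{i^m_1}\cdots X_{i^m_{r_m}}$. The genuine difference is the final linear-algebra step. The paper never proves linear independence of these shuffle products directly: it observes that the images lie in, and (``it is easy to see'') span, the degree $(1,\dots,1)$ part of $A_n\bullet_{sh}A_n$, computes the dimension of that space to be $n!-(n-1)!$ via \eqref{coLie-V}, and then deduces injectivity of $g\circ\de^*$ on $\vsspan_{\bbK}(\Xi)$ from the equality of this dimension with $|\Xi|$ (the cycle-decomposition count). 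You instead prove linear independence of the images outright, recognizing via \eqref{sigma-pty} that the branch words are exactly the multilinear Lyndon words and invoking Radford's freeness theorem, or equivalently the Chen--Fox--Lyndon leading-word triangularity. The two routes are complementary uses of the same PBW-type fact about the shuffle algebra: the paper's dimension count lets it get by with a spanning assertion left unproved, whereas your argument makes the combinatorial input explicit at the cost of importing (or reproving) Lyndon-word theory; one small point you rightly flag is that the classical statement must be checked to survive the Koszul signs of the odd variables $X_i$, which it does since the leading coefficients remain $\pm 1$. With that ingredient, your concluding implication $\sum_{\al} c_{\al} Y_{\al}=\de_0^* w \Rightarrow c_{\al}=0$ for all $\al$ is exactly the paper's conclusion.
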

\begin{proof}
To prove this claim, we need Lemma \ref{lem:gcircdelta} and Remark \ref{rem:gshuffle}. 

Let us, first, prove that the map 
\begin{equation}
\label{g-delta-star-Forks}
(g\circ \delta^*) \Big|_{\vsspan_{\bbK}(\Xi)} :  \vsspan_{\bbK}(\Xi) \to \La^{-1}\coAs(n) \cong A_n^{(1,\dots,1)}
\end{equation}
is injective.  

Indeed, by Remark \ref{rem:gshuffle} and the symmetry of the shuffle product,
we have 
$$
g\circ \delta^* \big( Y^{\si}_{r_1, \dots, r_k} \big) = g\circ \delta^* \big( T^{\si}_{r_1, \dots, r_k} \big) = 
(X_{i^1_1} \dots X_{i^1_{r_1}}) \bullet_{sh}  \dots \bullet_{sh}  (X_{i^{k}_1} \dots X_{i^k_{r_k}}).
$$

Using the identification between $\La^{-1}\coLie(n)$ and the degree $(1,\dots,1)$-subspace 
of the quotient $A_n / (A_n \bullet_{sh} A_n)$, it is easy to see that $g\circ \delta^*$ gives us 
a surjective map from $ \vsspan_{\bbK}(\Xi)$ to the degree $(1,\dots,1)$-subspace of $A_n \bullet_{sh} A_n$. 
Since both  $ \vsspan_{\bbK}(\Xi)$
and the degree $(1,\dots,1)$-subspace of $A_n \bullet_{sh} A_n$ have the same dimension 
$$
n! - (n-1)!, 
$$
we conclude that \eqref{g-delta-star-Forks} is indeed injective. 

Let us consider a vector $v \in  \vsspan_{\bbK}(\Xi)$ and assume that 
$$
v = \de_0^* (w)
$$
for some $w \in V_\bullet(n)^*$. 

Using  Lemma \ref{lem:gcircdelta}, we conclude that
\[
 g( \delta^* v) = g( \delta^* \delta_0^*  w)= -g( \delta_0^* \delta^*  w) = 0.
\]

Thus, since \eqref{g-delta-star-Forks} is injective, we conclude that $v = 0$ 
and the desired claim follows. 
\end{proof}

With these preparations we are now ready to prove the following statement left open above.
\begin{lemma}
\label{lem:specseqabutment}
The spectral sequence arising in Section \ref{sec:H-V-bul} degenerates at the second page.
\end{lemma}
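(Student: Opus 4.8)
The plan is to reduce the statement to a single dimension count in one extreme cohomological degree, and to carry out that count using the forks $\Xi$ and the shuffle map $g\circ\de^*$. First I would use the internal grading to locate the differentials. The complex $(V_\bullet(n),\de_0)$ lives in degrees $2-n\le\bullet\le 0$, and by \eqref{E2-V-bul}, \eqref{E2-with-q} the page $E_2V_\bullet(n)$ is concentrated in filtration degrees $2\le q\le n$: every class in filtration $q\ge 3$ sits in the bottom degree $2-n$, while filtration $q=2$ carries the whole strip $\Com\odot\La\Lie(n)/\La\Lie(n)$ (degrees $2-n$ through $0$) together with the degree-$(2-n)$ piece coming from $\bs(\La\Com\odot\La\Lie)$. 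Since each $d_r$ ($r\ge 2$) lowers the filtration by $r$ and raises the cohomological degree by $1$, and since nothing lives below degree $2-n$, every possibly nonzero $d_r$ must emanate from a bottom-degree class in filtration $q\ge 3$ and land in degree $3-n$ inside filtration $2$; in particular no differential ever maps \emph{into} a bottom-degree class. Hence degeneration at $E_2$ is equivalent to the survival of all bottom-degree classes, i.e. to the equality $\dim H^{2-n}(V_\bullet(n),\de_0)=\dim\big(\text{bottom-degree part of }E_2V_\bullet(n)\big)$.

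It is cleanest to establish this equality in the linear dual, where the bottom degree $2-n$ becomes the top degree $n-2$ and the forks naturally live. Dualizing \eqref{E2-V-bul} gives that the top-degree part of $E_2(V_\bullet(n)^*,\de_0^*)$ has dimension $\dim U+\dim C_2$, where $U$ is the space \eqref{U} (of dimension $n!-(n-1)!$) and $C_2\subset\Com\odot\La\Lie(n)/\La\Lie(n)$ is its degree-$(2-n)$, i.e. two-factor, part. The inequality $\dim H^{n-2}(V_\bullet(n)^*,\de_0^*)\le\dim U+\dim C_2$ is automatic, so the task is to produce $\dim U+\dim C_2$ independent cohomology classes.

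For the matching lower bound I would take two families. The fork classes $[\Xi]$ give $\dim U=n!-(n-1)!$ classes, independent in cohomology by Claim \ref{cl:forks-are-good}. The two-factor commutative part contributes $\dim C_2$ classes, represented in the dual by two-branch cochains; these lie in the minimal filtration degree $q=2$ and in the top cohomological degree $n-2$, positions at which no $d_r$ with $r\ge 2$ can act, so they survive automatically and are independent among themselves. Their joint independence is exactly what $g\circ\de^*$ detects. By Lemma \ref{lem:gcircdelta} the composite $g\circ\de^*$ annihilates $\de_0^*$-coboundaries, hence descends to a map $\bar G\colon H^{n-2}(V_\bullet(n)^*,\de_0^*)\to\La^{-1}\coAs(n)$. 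On the forks $\bar G$ is injective, this being the injectivity of \eqref{g-delta-star-Forks} in the proof of Claim \ref{cl:forks-are-good}; on the two-factor commutative classes $\bar G$ vanishes, because such a class is the combination of the two orderings of a two-branch tree complementary to the fork (cf. Claim \ref{cl:shuffles}), and by \eqref{g-delta-star} together with the graded commutativity $f(T_1)\bullet_{sh}f(T_2)=\pm\, f(T_2)\bullet_{sh}f(T_1)$ the two branch-shuffles cancel. Thus in any relation $\sum_i a_i[\,\mathrm{fork}_i\,]+\sum_j b_j[\gamma_j]=0$, applying $\bar G$ forces all $a_i=0$ by injectivity on forks, and then all $b_j=0$ by the automatic survival of the commutative classes. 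This yields $\dim H^{n-2}(V_\bullet(n)^*,\de_0^*)=\dim U+\dim C_2$, hence every bottom-degree class survives, hence all $d_r$ with $r\ge 2$ vanish.

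The hard part will be the last step: verifying that the degree-$(2-n)$ part $C_2$ of $\Com\odot\La\Lie/\La\Lie$ is indeed represented in the dual by precisely those two-branch combinations whose branch-shuffles cancel under $g\circ\de^*$, and that these are exactly complementary to the forks, so that the two contributions $\dim U$ and $\dim C_2$ add with no overlap. This requires pinning down the signs in $\tau_*$ and in \eqref{g-delta-star}, and matching $\dim U$ with the dimension of the decomposables $A_n\bullet_{sh}A_n$ via the identification \eqref{coLie-V}; once $\bar G$ is seen to separate the forks (landing in a complement of $A_n\bullet_{sh}A_n$-indecomposables) from the commutative classes (landing in $\ker\bar G$), the dimension squeeze closes and degeneration follows.
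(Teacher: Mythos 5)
Your proposal follows the paper's architecture in outline --- reduce to the survival of the extreme-degree classes, dualize so that everything at stake sits in the top degree $n-2$, and use the forks $\Xi$ together with Claim \ref{cl:forks-are-good} and the map $g\circ\de^*$ --- but it deviates at the final accounting step, and that is where it has a genuine gap. Because you run a raw dimension count in the top degree of the dual complex, you must produce $\dim U+\dim C_2$ \emph{jointly} independent classes in $H^{n-2}(V_\bullet(n)^*,\de^*_0)$ (here $C_2$ is the degree-$(2-n)$ part of \eqref{summand-more}), and therefore you must exhibit $\de_0^*$-cocycles representing a \emph{basis} of the summand $(X^*)^{n-2}$ on which $\bar G=g\circ\de^*$ vanishes. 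The vanishing mechanism you describe is correct: the antisymmetric combinations $T_{||,\si,r}-(-1)^{r(n-r)}T^{\opp}_{||,\si,r}$ are killed by $g\circ\de^*$ by \eqref{g-delta-star} and the graded commutativity of $\bullet_{sh}$. But the statement that the classes of these antisymmetric combinations \emph{span} $(X^*)^{n-2}$ is exactly the step you defer as ``the hard part,'' and it is not a formality: it amounts to showing that the pairing between antisymmetrized two-branch string trees and the cocycles $\Psi(v_1v_2)$ ($v_1,v_2$ Lie monomials) has full rank $\dim C_2$, which requires a leading-term analysis of $\mj$ of the kind carried out in the proof of Claim \ref{cl:1-n-lower-bound}. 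As written, the proof is therefore incomplete at its decisive point.

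The gap is avoidable, and this is in effect what the paper does: keep track of filtration positions, not only of cohomological degree. In the dual spectral sequence every possibly nonzero differential is of the form \eqref{d-r-dual}: its source is the position-$2$ space $(X^*)^{n-3}$ and its target lies inside $U^*$ at position $r+2\ge 4$. The space $(X^*)^{n-2}$, and likewise the two-branch part of $U^*$, sit at position $2$, which no differential can hit (a source would have to sit at position $\le 0$) and from which no differential can emanate (since $n-2$ is the top degree). Hence the whole position-$2$ part of $E_2$ survives for free and never needs explicit representatives; what must be proved is only that the parts of $U^*$ in positions $\ge 4$ survive, and the forks do exactly this. Writing $U^*_k$ for the part of $U^*$ dual to the $k$-factor summand of \eqref{U}, the fork $Y^{\si}_{r_1,\dots,r_k}$ is a genuine $\de_0^*$-cocycle annihilating $\cF^{k-1}V_\bullet(n)$, i.e.\ it lies in the $k$-th level of the dual (decreasing) filtration; the forks with exactly $k$ branches number $\dim U^*_k$, and all forks are independent in $H^{n-2}$ by Claim \ref{cl:forks-are-good}, so the $q$-th level of the induced filtration on $H^{n-2}$ has dimension at least $\sum_{k\ge q}\dim U^*_k$. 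Since for $k\ge 3$ the position-$k$ part of $E_\infty$ in top degree is a \emph{quotient} of $U^*_k$, these inequalities force equality position by position, and all $d^*_r$ vanish. With this bookkeeping the classes dual to $C_2$, and any joint-independence argument beyond Claim \ref{cl:forks-are-good} itself, are unnecessary. (A small indexing point in your reduction: the sources of the possibly nonzero $d_r$ sit in filtration $q=r+2\ge 4$, not merely $q\ge 3$.)
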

\begin{proof}
According to Claim \ref{cl:E2-V-bullet}, $E_2 V_\bullet(n)$ splits (as the graded 
vector space) into the direct sum 
$$
\Com \odot \La \Lie (n) \big/ \La \Lie (n)  ~\oplus ~ \bs \big( \La\Com \odot \La \Lie (n) \big/ \La \Lie (n) \big)\,.
$$
It is easy to see that every vector in the summand
\begin{equation}
\label{summand-low}
U : = \bs \big( \La\Com \odot \La \Lie (n) \big/ \La \Lie (n) \big)
\end{equation}
has degree $2-n$, while the summand  
\begin{equation}
\label{summand-more}
X := \Com \odot \La \Lie (n) \big/ \La \Lie (n)
\end{equation}
lives in degrees 
$$
2 - n \le {\scriptstyle \bullet} \le 0. 
$$

We also know that every vector in \eqref{summand-more} can be represented by 
a genuine cocycle in $\Br(n)$. Thus the restriction of all higher differentials $d_r$, $(r \ge 2)$
to the subspace \eqref{summand-more} is zero and it remains to show 
that the restriction of  $d_r$ for $r \ge 2$ to \eqref{summand-low} is also zero. 

To prove this statement, we pass to the obvious dual version of Claim \ref{cl:E2-V-bullet}, 
which says that
$$
E_2 V_\bullet(n)^* \cong X^* \oplus U^*,
$$
where $X^*$ is the kernel of the map $\Ger(n)^* \to \La \Lie(n)^*$ and $U^*$ is the linear 
dual of \eqref{summand-low}. 

The advantage of passing to the dual complex is that $U^*$ lives in the top degree $n-2$ 
of the cochain complex $(V_\bullet(n)^* , \de^*_0)$.
So all vectors in $U^*$ can be represented by genuine cocycles in $(V_\bullet(n)^* , \de^*_0)$.
Moreover, the first (potentially) non-zero differential $d_r^*,~ r \ge 2$ may only send vectors in 
$X^*$ of degree $n-3$ to vectors in $U^*$:
\begin{equation}
\label{d-r-dual}
(X^*)^{n-3} ~ \to ~ U^* = (U^*)^{n-2}\,.
\end{equation}

Using the explicit representatives of vectors in $X$ \eqref{summand-more}
and the $S_k$-symmetry of $Y^{\si}_{r_1, \dots, r_k}$, 
we see that the evaluation of every vector  $Y^{\si}_{r_1, \dots, r_k}$ on 
representatives of vectors in $X$ is zero. Thus all elements in $\Xi$ represent 
vectors in $U^*$.  

Due to Claim \ref{cl:forks-are-good}, the cohomology classes of $\Xi$ in 
$H^{n-2}(V_\bullet(n)^* , \de^*_0)$ span a subspace of dimension 
$$
n! - (n-1)!
$$

Thus, since $U^*$ also has dimension $n! - (n-1)!$ and the only 
component of the first potentially non-zero $d^*_r$ is \eqref{d-r-dual}, we conclude that 
$$
\dim \big( E_{\infty} V_\bullet(n) \big) \ge \dim \big( E_2 V_\bullet(n) \big). 
$$
 
Lemma \ref{lem:specseqabutment} is proved. 
\end{proof}

\vspace{1cm}

\noindent\textsc{Department of Mathematics,
Temple University, \\
Wachman Hall Rm. 638\\
1805 N. Broad St.,\\
Philadelphia, PA, 19122 USA \\
\emph{E-mail address:} {\bf vald@temple.edu}}

\vspace{0.5cm}

\noindent\textsc{University of Z\"urich, \\
Institute of Mathematics, \\
Winterthurerstrasse 190,\\
8057 Z\"urich, Switzerland  \\
\emph{E-mail address:} {\bf thomas.willwacher@math.uzh.ch}}

\end{document}